\definecolor{darkgreen}{rgb}{0,0.55,0}
\DeclareMathAlphabet{\mathpzc}{OT1}{pzc}{m}{it}
\newcommand{\e}{\varepsilon}
\newcommand{\curl}{\operatorname{curl}}
\newcommand{\J}{\mathbb{J}}
\newcommand{\p}{\partial}
\newcommand{\C}{\mathbb{C}}
\newcommand{\R}{\mathbb{R}}
\newcommand{\Z}{\mathbb{Z}}
\newcommand{\logep}{ \left| \ln \e \right| }
\newcommand{\rest}{\mathbin{\vrule height 1.6ex depth 0pt width
0.13ex\vrule height 0.13ex depth 0pt width 1.3ex}}
\def\barint{\mathop{\vrule width 6pt height 3 pt depth -2.5pt \kern -8.8pt
\intop}}
\newcommand{\chara}{{\mathbf{1}}}
\newcommand{\LV}{\left|}
\newcommand{\RV}{\right|}
\newcommand{\LA}{\left<}
\newcommand{\RA}{\right>}
\newcommand{\LC}{\left(}
\newcommand{\RC}{\right)}
\newcommand{\LB}{\left[}
\newcommand{\RB}{\right]}
\newcommand{\LN}{\left\|}
\newcommand{\RN}{\right\|}
\newcommand{\E}{\mathbb{E}}
\newcommand{\calD}{{\mathcal{D}}}
\newcommand{\calB}{\mathcal{B}}
\newcommand{\calH}{{\mathcal{H}}}
\newcommand{\calP}{{\mathcal{P}}}
\newcommand{\calL}{{\mathcal{L}}}
\newtheorem{theorem}{Theorem}
\newtheorem{proposition}{Proposition}
\newtheorem{lemma}{Lemma}
\theoremstyle{remark}
\newtheorem{remark}{Remark}
\newtheorem{example}{Example}
\begin{document}
\title[Hydrodynamic limit]{Hydrodynamic limit \\ of the Gross-Pitaevskii equation}
\author{Robert L.~Jerrard}
\address{Department of Mathematics, University of Toronto}\email{rjerrard@math.toronto.edu}
\author{Daniel Spirn}
\address{Department of Mathematics, University of Minnesota}\email{spirn@math.umn.edu}

\date{\today}
\thanks{{The  first author was partially supported by the National Science and Engineering Research Council of Canada under operating grant 261955, and the second author was partially supported by NSF grant DMS-0955687. We are grateful to these
agencies for their support. We would also like to thank Jeremy Quastel for explaining
to us the proof of Lemma \ref{L.Jeremy}. }}

\begin{abstract}
 {We study dynamics of vortices in solutions of the Gross-Pitaevskii equation 
$i\p_t u = \Delta u + \e^{-2} u(1 - |u|^2)$  
on $\R^2$ with nonzero degree at infinity.  }We prove that vortices move according to the classical Kirchhoff-Onsager ODE for a small but finite coupling parameter $\e$.  By carefully tracking errors we allow for asymptotically large numbers of vortices, and this lets us connect the Gross-Pitaevskii equation on the plane to two dimensional incompressible Euler equations through the work of Schochet \cite{Schochet2}.
 
\end{abstract}

\maketitle


\section{introduction}

In this paper we prove some theorems that 
relate 
the Gross-Pitaevskii
equation
\begin{equation} \label{nls} 
i \p_t u_\e = \Delta u_\e+ {1\over \e^2} u_\e \LC 1 - |u_\e|^2 \RC,
\qquad
u :(0,T)\times \R^2\to \C
\end{equation}
in the limit $\e\to 0$, for suitable sequences of
initial data, to the incompressible 
Euler equations, which may be written in the form
\begin{equation}
\partial_t \omega + v\cdot \nabla \omega = 0 , \qquad
\label{Euler}\end{equation}
where $v$ is recovered from $\omega$ by convolution
with the Biot-Savart kernel $K$:
\begin{equation}
v(t,x) \ = \  \int_{\R^2} K(x - y)\omega(t,y)\ dy
\qquad\quad
\mbox{ for } \ \ K(x) :=  \frac{x^\perp}{2\pi|x|^2} =   \frac{(-x_2, x_1)}{2\pi|x|^2} .
\qquad
\label{BiotSavart}\end{equation}
We interpret $\omega$ as specifying the distribution of vorticity in an ideal incompressible fluid, and $v$ as the associated velocity field.
In fact the equation makes sense as long as  $\omega(t, \cdot)$ is a measure on $\R^2$ for every $t$ such that $\omega(t,\cdot)\in H^{-1}(\R^2)$, with derivatives of $\omega$ understood in the sense of distributions.

In our main results we construct sequences of 
solutions $u_\e$ of \eqref{nls}
for which
the vorticity associated to $u_\e$
converges as $\e \to 0$, after some rescalings, to 
a solution $\omega$ of \eqref{Euler}, \eqref{BiotSavart}.
We will prove:

\begin{theorem}\label{T1}
Assume that $\omega_0$ is a probability measure on $\R^2$ with finite
second moment, and such that $\| \omega_0 \|_{H^{-1}}<\infty$
and $\int x \omega_0 dx = 0$.

For $\e\in (0,\e_0)$, with $\e_0$ fixed in Theorem~\ref{thmdynamics} below, assume that $n_\e$ is an integer such that 
{ $1 \ll n_\e \ll (\ln |\ln \e|)^{1/2}$,} and that $a^\e \in (\R^2)^{n_\e}$ is a 
collection of points such that
\[
\frac 1{n_\e} \sum_j \delta_{a^\e_j}\rightharpoonup \omega_0
\qquad
\mbox{ weak-*, as $\e\to 0$}.
\]
{Assume in addition that 
\begin{equation}
\frac 1{n_\e}\sum_{j} |a_j^\e|^2 \ \le M_0,\qquad
\quad
\frac {-1}{n_\e(n_\e-1)}\sum_{ j\ne k}  \ln |a_j^\e -a_k^\e| \ \le M_0
\label{Mbounds0}\end{equation}
for some $M_0>0$, independent of $\e$. }
Let $u_\e$ solve \eqref{nls} with initial data
\[
u_\e^0(x) \ = \ \prod_{j} \phi_\e( x - a^\e_j),
\]
for  $\phi_\e:\R^2\to \C$ described in \eqref{modelvortex} below.
Define the {\em current} $j(u_\e)$ associated to $u_\e$ by
\begin{equation}
j(u_\e) := (i u_\e,\nabla u_\e),\qquad
\qquad \mbox{ where }(v,w) := \frac 12 ( v \bar w + w \bar v) \mbox{ for }v,w\in \C.
\label{current.def}\end{equation}
Further define the {\em vorticity} $\omega_\e(u_\e)$ and {\em rescaled vorticity} $\widetilde \omega_\e$ 
by
\begin{equation}
\omega(u_\e) := \nabla\times j(u_\e),
\qquad\qquad \widetilde \omega_\e(t, x) := \frac 1{2\pi n_\e}\omega(u_\e)(\frac t{2\pi n_\e}, x).
\label{vrv.defs}\end{equation}
Then the rescaled vorticities $\{\widetilde \omega_\e \}_{\e\in(0,1]}$ 
are precompact in the space $C((0,T), X_{\ln}^*)$, described in \eqref{CoTXstar} below, 
 and 
any limit point is a weak solution  of the Euler equations \eqref{Euler}, \eqref{BiotSavart}
with initial data $\omega_0$.

\end{theorem}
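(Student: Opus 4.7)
The strategy is to reduce the Gross--Pitaevskii dynamics to a finite-dimensional point-vortex ODE via the vortex-tracking result (Theorem~\ref{thmdynamics}), then pass to the mean-field Euler limit as $n_\e \to \infty$ using Schochet's theorem \cite{Schochet2}. First, I would apply Theorem~\ref{thmdynamics} to produce, for each $\e$, continuous trajectories $t \mapsto a^\e(t) = (a^\e_1(t),\ldots,a^\e_{n_\e}(t))$ such that, on the time scale built into \eqref{vrv.defs}, the rescaled vorticity $\widetilde \omega_\e(t,\cdot)$ is close in a norm dominating $X_{\ln}^*$ to the empirical measure
\[
\pi_\e(t) := \frac{1}{n_\e}\sum_{j=1}^{n_\e} \delta_{a^\e_j(t)},
\]
and $a^\e(\cdot)$ satisfies the rescaled Kirchhoff--Onsager ODE $\dot a^\e_j = \frac{1}{n_\e}\sum_{k \ne j} K(a^\e_j - a^\e_k)$ up to quantitative errors. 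The hypothesis $n_\e \ll (\ln|\ln \e|)^{1/2}$ is precisely what makes these errors negligible despite their accumulation over $O(n_\e^2)$ pairwise interactions.

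Next, I would note that the Kirchhoff--Onsager flow exactly conserves $\frac{1}{n_\e}\sum_j |a^\e_j|^2$, $\frac{1}{n_\e}\sum_j a^\e_j$, and the normalised log-Hamiltonian $-\frac{1}{n_\e(n_\e - 1)}\sum_{j \ne k} \ln|a^\e_j - a^\e_k|$, the last because it is the Hamiltonian itself and the first two by direct computation using $x\cdot K(x) = 0$ and antisymmetry of $K$. From \eqref{Mbounds0} these quantities are uniformly bounded in $\e$ and $t$, so the corresponding quantities for $\pi_\e(t)$ are too. The log-energy bound forces tightness together with a uniform negative-order Sobolev-type estimate on $\pi_\e(t)$, giving spatial precompactness in $X_{\ln}^*$. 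Temporal equicontinuity in $X_{\ln}^*$ follows by testing against smooth functions and using the ODE together with the log-energy bound to tame the singular Biot--Savart integrals, as in Schochet's framework. Combined with the closeness estimate from the first step, this yields precompactness of $\widetilde \omega_\e$ in $C((0,T), X_{\ln}^*)$.

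To identify the limit, I would invoke Schochet's mean-field theorem~\cite{Schochet2}: any $X_{\ln}^*$-limit of empirical measures of Kirchhoff--Onsager point-vortex systems with $n_\e \to \infty$ under the bounds above is a distributional solution of \eqref{Euler}--\eqref{BiotSavart}. The hypothesised convergence $\frac{1}{n_\e}\sum_j \delta_{a^\e_j} \rightharpoonup \omega_0$ fixes the initial datum.

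The principal obstacle is the first step. Product-vortex initial data are far from Ginzburg--Landau energy minimisers, and one must control all $n_\e$ vortex locations through the nonlinear Gross--Pitaevskii flow while preventing vortex collisions on the $n_\e$-dependent time scale, with the discrepancy between $\widetilde \omega_\e(t)$ and $\pi_\e(t)$ measured in a norm strong enough to pass through Schochet's compactness. This is the substance of Theorem~\ref{thmdynamics}, and the admissible window $1 \ll n_\e \ll (\ln|\ln \e|)^{1/2}$ is what calibrates the tradeoff between the $O(n_\e^2)$ interaction count and the $|\ln \e|$-size of the natural vortex-tracking errors.
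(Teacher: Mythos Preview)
Your strategy is essentially the paper's: verify the product initial data meets the hypotheses of Theorem~\ref{thmdynamics}, use that theorem to pin $J(u_\e(t))$ near $\pi\sum_j \delta_{a_j(t)}$ in $X_{\ln}^*$, then invoke Schochet (Theorem~\ref{Thm.Schochet}) on the rescaled point-vortex empirical measures and transfer precompactness to $\widetilde\omega_\e$ by the triangle inequality.

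Two small corrections. First, in Theorem~\ref{thmdynamics} the trajectories $a(t)$ are \emph{defined} as the exact solution of the point-vortex ODE \eqref{PV2}, and the theorem asserts that $J(u(t))$ stays $X_{\ln}^*$-close to $\pi\sum_j\delta_{a_j(t)}$; it is not that $a^\e(\cdot)$ solves the ODE ``up to errors.'' Second, the role of $n_\e \ll (\ln|\ln\e|)^{1/2}$ is not really about an $O(n_\e^2)$ interaction count inside the error estimates of Theorem~\ref{thmdynamics} (there the standing assumption is only $n\le \e_0|\ln\e|^{1/2}$); rather, it enters when you check that the validity time $\tau_\star$ in \eqref{Tstar.def} covers $[0,T/2\pi n_\e]$. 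The conserved log-energy gives $\rho_{a(t)}\ge \tfrac14\exp(-M_1 n_\e^2)$ (see \eqref{rhoRbds}), so $\rho_{a(t)}^{-2}\le C\exp(2M_1 n_\e^2)=|\ln\e|^{o(1)}$ precisely when $n_\e^2=o(\ln|\ln\e|)$, and then $Cn_\e\int_0^{T/2\pi n_\e}\rho_{a(t)}^{-2}\,dt \le |\ln\e|$ for small $\e$. Finally, Schochet's compactness is stated in $C([0,T];W^{-2,1})$, and the paper uses the interpolation Lemma~\ref{interp.X.W} (second moments plus $W^{-2,1}$ control $X_{\ln}^*$) to upgrade to $C((0,T);X_{\ln}^*)$; you should not assume Schochet gives $X_{\ln}^*$ directly.
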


The theorem implies that for fixed $\e$, the (unrescaled) vorticity $\omega(u_\e)$
is of order $n_\e\gg 1$ in certain weak norms, which roughly speaking
implies that the associated velocities are also of order $n_\e$.
The rescaling $t\mapsto \frac t{2\pi n_\e}$ in the definition of 
$\widetilde \omega_\e$ is thus needed to select a time
scale in which, informally, ``velocities are of order $1$". See also Remark
\ref{rem:trescale} below.

In Theorem \ref{T.random} in
Section \ref{S:hydro}, we also prove that 
 for certain {\em random} choices of initial vortex locations,
we can let the number of vortices diverge  as $\e\to 0$
more rapidly than in Theorem \ref{T1},
as long as we are satisfied with {\em almost sure} convergence, rather
than insisting on convergence for {\em every} sequence of initial 
data.

Conditions \eqref{Mbounds0} are a discrete version of the requirement that
$\omega_0 \in H^{-1}$ with finite second moment.
Any probability measure $\omega_0$ satisfying these
hypotheses can be approximated by a sequence of measures of the form
$\frac 1{n_\e} \sum \delta_{a^{n_\e}_j}$ 
satisfying \eqref{Mbounds0} for
some $M_0$. 
{For example, this applies to
the so-called ``vortex sheet" initial data
studied by Delort \cite{Delort} and others.}

The function $\phi_\e$ appearing in the statement of the theorem
can be taken to be the minimizer of the energy
\begin{equation}
\psi \mapsto \int_{B_{1}}e_\e(\psi) \ dx := \int_{B_{1}} \frac 12 |\nabla \psi|^2 + \frac 1 {4\e^2}(|\psi|^2-1)^2 \ dx
\label{modelvortex}\end{equation}
in the space $\{ \psi \in H^1_{loc}(\R^2; \C)  : \psi(x) = \frac{x_1+i x_2}{|x|} \ \mbox{ if }|x| \ge   \e^{1/2} \}$.

\begin{remark}
Results of a similar flavor as Theorem~\ref{T1} for the parabolic analogue of \eqref{nls} were obtained in \cite{KurzkeSpirn} on bounded domains with  Dirichlet boundary conditions.  In that setting it was shown that solutions with initial data with asymptotically large, albeit dilute, numbers of vortices converge weakly to a mean field model, predicted by \cite{E}.  
\end{remark}

\medskip

\subsection{point vortices and incompressible Euler}

Our argument is based on the fact that  both \eqref{nls} and \eqref{Euler}, \eqref{BiotSavart}
are known to be related to 
a system of 
ODEs, known as the Kirchhoff-Onsager law, that 
describes the evolution of a collection of $n$ point vortices 
on $\R^2$. One form of this system  is displayed in
\eqref{ODE} below.
In particular, we will use the following theorem, which
shows that the Euler equations arise as a hydrodynamic
limit of the point vortex system.

\begin{theorem} [Schochet \cite{Schochet2}]
Let  $b^n(t) = (b^n_1(t),\ldots, b^n_n(t))$ be
a sequence of solutions
of the point vortex ODEs
\begin{equation} \label{ODE}
{d \over dt} b^n_j = 
 \sum_{k \neq j} \frac 1n \frac 1 {2\pi} {( b^n_j - b^n_k )^\perp \over |b^n_j - b^n_k|^2 }
= 
 \sum_{k \neq j}\frac 1n K(b^n_j-b^n_k), 
\qquad \quad j=1,\ldots, n
\end{equation}
with $n\to \infty$, and {assume that there exists some $M_0>0$ such that
at time $t=0$,
\begin{equation}
\frac 1{n}\sum_{j} |b^n_j|^2 \ \le M_0,\qquad
\quad
\frac {-1}{n_\e(n_\e-1)}\sum_{j\ne k}  \ln |b_j^n -b_k^n| \ \le M_0
\label{Mbounds}\end{equation}
for every $n$.}

Then the solution $b^n(t)$ exists
and satisfies \eqref{Mbounds} for all $t>0$, and the sequence
\[
\omega^n(t) := \sum_{i=1}^n \frac 1 n  \delta_{b^n_i(t)}
\]
is precompact with respect to the topology induced by the norm
\[ 
\| \omega \|_{C(0,T;W^{-2,1})} := \sup_{0\le t \le T}\| \omega(t)\|_{W^{-2,1}(\R^2)} 
\]
for every $T>0$.
Moreover, if $\omega$ is any limit of a convergent subsequence,
then for every $t$, $\omega(t)$ is a probability measure in $H^{-1}(\R^2)$
with finite second moment, and  $\omega$  is a weak
solution of the incompressible Euler equations \eqref{Euler}, \eqref{BiotSavart}.
\label{Thm.Schochet}\end{theorem}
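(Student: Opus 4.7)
The plan has three parts: first, use the conserved quantities of \eqref{ODE} to obtain global existence and a priori bounds; second, derive a weak formulation for the empirical measure $\omega^n(t) := \frac{1}{n}\sum_j \delta_{b_j^n(t)}$ and deduce compactness in $C([0,T]; W^{-2,1})$; third, pass to the limit by using the conserved energy to control diagonal concentration of $\omega^n \otimes \omega^n$. For the first part, \eqref{ODE} is Hamiltonian with Hamiltonian proportional to $E_n := \frac{-1}{n(n-1)} \sum_{j \ne k} \ln|b_j^n - b_k^n|$; using $K(-z) = -K(z)$ and $z \cdot K(z) = 0$, direct calculation yields conservation of $E_n$, of the moment of inertia $I_n := \frac{1}{n} \sum_j |b_j^n|^2$, and of the center of mass. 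Because the vortex weights $1/n$ are all positive, $E_n \to +\infty$ at any collision (binary or higher), so conservation of $E_n$ precludes collisions and delivers global existence, with \eqref{Mbounds} persisting for all $t > 0$.

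For compactness, I would differentiate $\langle \omega^n(t), \phi\rangle$ for $\phi \in C_c^\infty(\R^2)$ and symmetrize in $(j, k)$ via the antisymmetry of $K$ to get
\[
\frac{d}{dt} \langle \omega^n, \phi \rangle = \iint_{x \ne y} H_\phi(x, y)\, d\omega^n(x)\, d\omega^n(y), \qquad H_\phi(x, y) := \tfrac{1}{2} [\nabla\phi(x) - \nabla\phi(y)] \cdot K(x - y).
\]
Since $|\nabla\phi(x) - \nabla\phi(y)| \le \|\nabla^2\phi\|_\infty |x-y|$ and $|K(x - y)| \le 1/(2\pi|x - y|)$, we have $\|H_\phi\|_\infty \le C \|\nabla^2\phi\|_\infty$, so $t \mapsto \langle \omega^n(t), \phi\rangle$ is Lipschitz uniformly in $n$. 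Combined with tightness from $I_n \le M_0$, an Arzelà--Ascoli argument yields the desired precompactness.

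The main obstacle is passing to the limit in $\iint H_\phi\, d\omega^n \otimes d\omega^n$ as $\omega^n \rightharpoonup \omega$: the kernel is bounded but discontinuous on the diagonal, so standard weak convergence fails directly. The remedy is to use $E_n$ to bound the mass of $\omega^n \otimes \omega^n$ near the diagonal. For $\delta \in (0, 1)$,
\[
\ln(1/\delta) \cdot \#\{(j, k) : j \ne k,\ |b_j^n - b_k^n| < \delta\} \le \sum_{\substack{j \ne k\\ |b_j^n - b_k^n| < \delta}} (-\ln|b_j^n - b_k^n|) \le C(M_0)\, n^2,
\]
where the last inequality combines $E_n \le M_0$ with the elementary bound $(\ln r)_+ \le \tfrac{1}{2} \ln(1 + r^2)$ and Jensen's inequality applied to $I_n$ to handle pairs with $|b_j^n - b_k^n| > 1$. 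Hence $(\omega^n \otimes \omega^n)(\{0 < |x - y| < \delta\}) \le C(M_0)/\ln(1/\delta)$ uniformly in $n$. Splitting the double integral at $|x-y| = \delta$, the near-diagonal piece is $O(\|\nabla^2\phi\|_\infty / \ln(1/\delta))$ uniformly in $n$, while the off-diagonal piece converges by weak convergence of $\omega^n \otimes \omega^n$ tested against the bounded continuous function $H_\phi\, \chara_{|x-y| \ge \delta}$. Sending $n \to \infty$ and then $\delta \to 0$ identifies $\omega$ as a weak solution of \eqref{Euler}--\eqref{BiotSavart}; the second-moment bound and $H^{-1}$ property of $\omega(t)$ follow from lower semicontinuity under weak-$*$ convergence.
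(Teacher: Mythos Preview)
The paper does not prove this theorem; it is quoted from Schochet \cite{Schochet2} and used as a black box in the proof of Theorem~\ref{T1}. Your sketch is essentially Schochet's original argument and is correct in its main lines: conservation of $I_n$ and $E_n$ gives global existence and the persistence of \eqref{Mbounds}; the symmetrized kernel $H_\phi$ is bounded by $C\|\nabla^2\phi\|_\infty$, yielding equicontinuity and (with tightness from $I_n$) the $C([0,T];W^{-2,1})$ compactness; and the logarithmic energy bound controls the near-diagonal mass of $\omega^n\otimes\omega^n$, which is precisely the mechanism Schochet uses to pass to the limit.

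One small technical point to fix: in the passage to the limit you test $\omega^n\otimes\omega^n$ against $H_\phi\,\chara_{\{|x-y|\ge\delta\}}$ and call it ``bounded continuous,'' but the sharp cutoff is discontinuous. Replace it by a smooth cutoff equal to $1$ on $\{|x-y|\ge\delta\}$ and supported in $\{|x-y|>\delta/2\}$; the transition annulus is then absorbed into your near-diagonal estimate with $\delta/2$ in place of $\delta$. Also note that the $H^{-1}$ bound you obtain for $\omega(t)$ by lower semicontinuity forces $\omega(t)$ to be atomless, so $\omega(t)\otimes\omega(t)$ does not charge the diagonal and the limiting double integral is well defined.
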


Schochet in fact proves compactness in somewhat stronger topologies than
the one described above.

\begin{remark}\label{rem:weakstrong}
if $\omega(t, \cdot)$ is a measure on $\R^2$ for every
$t$, then it is said to satisfy the 
weak vorticity formulation of the Euler equations if
\[
\int \int_{\R^2} \partial_t \zeta(t,x)  \omega(t, dx) \ dx \ dt
+ \int \iint_{\R^2\times\R^2} H_\zeta(t,x,y)\, \omega(t,dx)\, \omega(t,dy)\ dx \ dy \,dt = 0
\]
for all smooth $\zeta$ with compact support in $(0,T)\times \R^2$, where
\[
H_\zeta(t,x,y) = \frac 12 K(x-y) (\nabla \zeta(t,x) - \nabla\zeta(t,y)).
\]
This is the notion of weak solution appearing in Theorem \ref{Thm.Schochet}.
It was introduced by DiPerna and Majda \cite{DM} and later used by Delort \cite{Delort} to prove the existence
of weak solutions if the initial data is a measure in $H^{-1}$. The formulation
we that we use comes from work of
Schochet \cite{Schochet1}, which in some ways simplifies the treatment of
\cite{DM, Delort}.  

A theorem of Brenier, de Lellis and Sz\'ekelyhidi \cite{BdLSz} states that
every weak solution in this sense, that agrees with a smooth solution at time $t=0$, continues
to do so for all $t>0$. As a result, if in  Theorem \ref{Thm.Schochet} the measures
$\omega^n(0)$ converge weak-* to a measure of the form $\omega_0(x) dx$, 
where $\omega_0$ is a smooth function, then in fact the full sequence $(\omega^n(t))$
converges for every $t$, without passing to a subsequence, and the limit is
the unique smooth solution of the Euler equations \eqref{Euler}, \eqref{BiotSavart}
with initial data $\omega_0(x)$. 
\end{remark}

\subsection{{infinite-energy solutions of the Gross-Pitaevskii equation}}

{ Most of our effort will be devoted to
proving estimates that connect the Gross-Pitaevskii equation \eqref{nls} with 
point vortex ODEs, see Theorem \ref{thmdynamics} below. Results of this sort have been known  since the late '90s, 
see \cite{CJ0,CJ, LX}.
The chief novelty of our work is that 
\begin{itemize}
\item  we consider {\em infinite-energy} solutions of \eqref{nls} on $\R^2$, and
\item  we establish quantitative results, with explicit error estimates, that allow for 
a divergent number of vortices as $\e\to 0$.
\end{itemize}
These issues have been handled individually, in \cite{BJS} and \cite{JSp2}
respectively, so what is new here is that
we address both of them at once.  We describe below, particularly in Sections \ref{subs:wknorm} and \ref{subs.org}, some of the obstacles that arise in carrying this out.
}

Theorem \ref{T1}  will follow by combining 
Theorem \ref{thmdynamics}  with Theorem \ref{Thm.Schochet}.

Throughout our discussion of the Gross-Pitaevskii equation, we will regard $\e$ and $n$
as fixed, and we will often omit sub- and superscripts indicating the
dependence of other quantities on $n$ and $\e$. Thus
we will write $n$ rather than $n_\e$,
we will denote a point in $(\R^2)^n$ by $a  = (a_1,\ldots, a_n)$
rather than $a^n$, we will write the solution of \eqref{nls} as
$u$ rather than $u_\e$, and so on.

We will be interested in solutions 
of the Gross-Pitaevskii equation \eqref{nls} with
the property that $u(t,x) \approx e^{iD\theta}$ for large $|x|$, for some  nonzero
integer\footnote{In some of our results, we allow vorticity of mixed sign, and under these conditions
the ``degree at infinity" $D$ need not equal the number $n$ of vortices.}
 $D$.
Such functions do not belong to any very convenient Sobolev space, 
and they also have the property that the natural energy density
\begin{equation}
e_\e(u) :=  \frac 12 |\nabla u|^2 + \frac 1 {4\e^2}(|u|^2-1)^2
\label{eep.def}\end{equation}
is not integrable.
However, the solutions we are interested in all belong to spaces
\[
[U^D] + H^1 := \{ u_1 + u_2 : u_1\in [U^D], u_2\in H^1(\R^2;\C) \}
\]
where 
 for every $D$, $U^D$ is a fixed smooth function such that
\begin{equation}
\label{UD.def}
{
\mbox{ $U^D= e^{iD\theta}$ 
outside some compact set,}
}
\end{equation} 
and 
\[
[U^D] := \{ \tau_y U^D  ,\  y\in \R^2  \}, \qquad\mbox{ where } 
\tau_y U^D(x) := U^D(x-y).
\]
In this setting, we can appeal to results of Bethuel
and Smets \cite{BS}, which show that for $u_0\in [U^D]+H^1$, there exists a unique solution
$u \in C(\R; u_0 + H^1(\R^2))$ of \eqref{nls} such that $u(0)= u_0$.
(In fact Bethuel and Smets consider somewhat more general initial data than we describe here.)
Moreover, for such solutions, a sort of ``energy renormalized at infinity" is conserved.
In order to simplify some formulas, we henceforth always assume\footnote{This can be arranged by taking $U^D = f(r) e^{iD\theta}$, for $f$ such that $f(r) = 1$ for $r\ge 4$, and 
$\int_0^4 [r(f')^2 + D^2 f^2/r] \ dr = D^2 \ln 4$. It is not hard to see that this is possible.}
 that 
\begin{equation}
\frac 12 \int_{B(R)} |\nabla U^D|^2 \ dx = \pi D^2\ln R
\qquad
\mbox{for all $R  \ge 4$}.
\label{Psin0}\end{equation}
Then for $u \in [U^D]+H^1$, we define
\begin{equation}
\E_\e(u) :=
 \lim_{R\to \infty}\int_{B(R)} [e_\e(u) - \frac 12 |\nabla U^D|^2]\  dx
\overset{\eqref{Psin0}}{\ = \ } 
\lim_{R\to \infty}
\left[\int_{B(R)} e_\e(u)\, dx  - \pi D^2 \ln R \right].
\label{EE.def}
\end{equation}
For $u_0\in [U^D]+H^1$, the solutions provided by \cite{BS} satisfy
\begin{equation}
\E_\e(u(t)) = \E_\e(u_0)  \quad\mbox{ for all }t>0.
\label{consEr}\end{equation}
In particular, $\E_\e(u(t))$ is well-defined and finite
for every $t$. (See also Lemma \ref{lem.Edef} below.)

In addition to the current
$j(u) = (iu, \nabla u)$ and energy density $e_\e(u)$ defined above,
another quantity with a natural physical 
interpretations is the {\em density } $|u|^2$.
For any solution $u$,
the following identities hold:
\begin{align}
{1\over 2} {d \over dt} |u|^2 & = \operatorname{div} j(u)  \label{consmass} \\
{1\over2 } {d \over dt} j(u) & =   \operatorname{div}( \nabla u \otimes \nabla u ) + \nabla P  \label{consmomentum} 
\end{align}
where the prescribed pressure, $P:= - {1\over 2} \LV \nabla u \RV - {1\over 2} (u, \Delta u) + {|u|^4 - 1 \over 4 \e^2}$.  
By taking the curl of \eqref{consmomentum}, we obtain an equation for the evolution of the vorticity.
It is conventional to express this in terms of ${J(u)}:= \frac 12 \nabla\times j( u) = \frac 12 \omega(u)$,
which a short computation shows is equal to the Jacobian determinant
$\det \nabla u$, here viewing $u$ as a map $\R^2\to \R^2$ in the natural way. 
The resulting equation (for the ``half-vorticity'') is
\begin{equation}
{d \over dt} J(u)  = \curl  \operatorname{div}( \nabla u \otimes \nabla u ) = \J_{kl} \p_{x_k} \p_{x_m} \LC u_{x_m}, u_{x_l} \RC, \qquad\quad \J := \left(\begin{array}{rr} 0 &1\\-1&0\end{array}\right).
\label{consjac}
\end{equation}
Equations  \eqref{consmass} and \eqref{consjac}, together with conservation of energy, have been used crucially in all studies of vortex motion for \eqref{nls}. 

\subsection{{ a weak norm}\label{subs:wknorm}
}

We will  be interested in describing solutions for which the vorticity is close to
a sum of point masses. To formulate this condition more precisely, we
will introduce a suitable weak norm $X_{\ln}^*$, and we will impose conditions such as
\begin{equation}
\Big\| J(u) - \pi \sum_{i=1}^n  d_i \delta_{a_i} \Big\|_{X_{\ln}^*} \ll 1
\label{sample.eqn}\end{equation}
for some $a = (a_1,\ldots, a_n)\in (\R^2)^n$ and $d = (d_1,\ldots d_n)\in \{\pm 1\}^n$.
We will often, but not always, restrict our attention to the case $d_i=1$ for all $i$.

A good choice of norm is important and not  obvious and, indeed, is one of the main new issues in this paper.
We need the norm to be strong enough that
conditions like \eqref{sample.eqn} imply useful
information, such as lower bounds on $\E_\e(u)$, but weak enough that, for example,
\begin{equation}
\| J(u(t)) - J (u(s)) \|_{X_{\ln}^*}\to 0\mbox{ as } s\to t
\label{XcJ}\end{equation}
for the solutions $u$ of \eqref{nls} that we consider.
{Properties of this sort are not satisfied by the most natural analogs of
norms used in earlier work such as \cite{JSp2}, as we show in Example \ref{ex.discont}
at the end of Section \ref{ss:wknorm}.}

The norm we use {is a sort of weighted $\dot W^{-1,1}$ norm,} defined as follows.
For signed measures $\mu$ (or somewhat less regular distributions), 
we set
\begin{equation}
\| \mu \|_{X_{\ln}^*} := \sup \left\{ \int \phi \ d\mu \ : \ \phi\in C^1_c(\R^2),\ \ \  
\| \phi \|_{X_{\ln}} \le 1\right\}.
\label{Xstar.def}\end{equation}
where for Lipschitz continuous $\phi:\R^2\to \R$ (not necessarily with
compact support),  we write
\[
\| \phi\|_{X_{\ln}} := \operatorname{ess\,sup}_{x\in \R^2} \left( (1+\ln^+|x|) \ |D\phi(x)| \right).
\]
We also define $X_{\ln}^*$ to be
the space of distributions on $\R^2$ with finite $X_{\ln}^*$
norm.
The space $C((0,T); X_{\ln}^*)$  
is the (Banach) space of continuous maps from the interval
$(0,T)$ into $X_{\ln}^*$, endowed with the norm
\begin{equation}
\| \mu \|_{C((0,T); X_{\ln}^*)} = \sup_{0<t<T} \|\mu(t)\|_{X_{\ln}^*}.
\label{CoTXstar}\end{equation}

Some basic properties of the $X_{{\ln}}^*$ norm
are established in Section \ref{ss:wknorm}
below. 
These imply in particular that \eqref{XcJ} holds
for the Bethuel-Smets \cite{BS} solutions of \eqref{nls}.

\subsection{The Kirchhoff-Onsager energy}
As mentioned above, it will turn out that conditions such as \eqref{sample.eqn}
imply very precise lower bounds for $\E_\e$, which will depend on the vortex locations
$a := (a_1,\ldots, a_n)$ and degrees $d = (d_1,\ldots, d_n)$. These lower bounds will be expressed in terms
of the function
\begin{equation}
W_\e(a,d) := n(\pi |\ln \e| + \gamma) \ + \ W(a,d),
\qquad\qquad W(a,d) := - \pi \sum_{j\ne k} d_i d_j \ln |a_i - a_j|  .
\label{Wep.def}\end{equation}
Here $\gamma$ is a specific number,
first identified in \cite{BBH}, whose definition is recalled in
\eqref{gamma.def} below.
If $d_i=1$ for all $i$, we will write simply $W_\e(a)$ and $W(a)$.

Note that $W$ is the classical Kirchhoff-Onsager energy for interacting vortices.

Given $a = (a_1,\ldots, a_n) \in (\R^2)^n$, we will always write
\begin{equation}
\rho_{a} := \frac 14 (1 \wedge \min_{i\ne j} |a_i-a_j|), \qquad R_{a} := 4(1\vee  \max_j|a_j|) {.}
\label{rhoR.def}\end{equation}

\subsection{point vortices and the Gross-Pitaevskii equation}

We can now state the extension of results of \cite{JSp2}, \cite{BJS} that is needed for the proof of
our main theorem. In the statement of the theorem we regard $n$ and $\e$
as fixed.

\begin{theorem}  \label{thmdynamics}
Let $a(t) : (0,T)\to (\R^2)^n$
be a solution of the point vortex equations
\begin{equation}
\frac d{dt} a_j = 2\pi \sum_{j\ne k}  K(a_j - a_k) \qquad j=1,\ldots, n
\label{PV2}\end{equation}
{with initial data $a(0)$ satisfying
\begin{equation}
\frac 1{n}\sum_{j} |a_j|^2 \ \le M_0,\qquad
\quad
\frac {-1}{n(n-1)}\sum_{ j\ne k}  \ln |a_j -a_k| \ \le M_0
\label{Mbounds2}\end{equation}
}
for some $M_0>0$.

Let $u$ solve the Gross-Pitaevskii equation \eqref{nls} with initial data $u(0,x) = u_0(x)$ such that
\begin{align}
\| J (u_0) -  \pi \sum_{j=1}^n   \delta_{a_j(0)} \|_{X^*_{\ln}} 
&\leq  \e^{\frac 1 2}
\label{t1.h1} \\
\Sigma(u_0; a(0))
:= \E_\e (u_0) - W_\e(a(0))  &\leq \e^{1\over 2} .
\label{t1.h2}\end{align}

Then there exists some $C, \e_0>0$, depending only on $M_0$ above, such that if
\[
0 < \e <\e_0\qquad \mbox{ and } \ \ n \le \e_0\logep^{1/2}
\]
then 
\begin{equation}
\Big\|  J(u(t)) - \sum_{j=1}^n \pi  \delta_{a_j(t)} \Big\|_{X^*_{\ln}} 
\leq \e^{1\over 3}
\label{t1.c1}\end{equation}
for all for all $0\le t \le \tau_\star = \tau_\star(a(0), \e)$, where
\begin{equation}
\tau_\star := \sup \left\{ T > 0 \ : C {n}  \int_0^{T} \rho_{a(t)}^{-2} \ dt \ \le  |\ln \e|  \right\}.  
\label{Tstar.def}\end{equation}
\end{theorem}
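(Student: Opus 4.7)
My plan is to run a modulated-energy and continuity argument that simultaneously tracks the excess energy
\[
\Sigma(t) := \E_\e(u(t)) - W_\e(a(t))
\]
and the weak vorticity discrepancy $\delta(t) := \| J(u(t)) - \pi\sum_j \delta_{a_j(t)}\|_{X_{\ln}^*}$.  Because $\E_\e(u(t))$ is conserved along the Gross-Pitaevskii flow by \eqref{consEr} and $W$ is the Hamiltonian of the point-vortex system \eqref{PV2}, the value $W_\e(a(t)) = n(\pi\logep+\gamma) + W(a(t))$ is constant in $t$.  Hence $\Sigma(t) \equiv \Sigma(0) \le \e^{1/2}$ is automatic from \eqref{t1.h2}, and the entire task reduces to propagating smallness of $\delta(t)$ from \eqref{t1.h1} to \eqref{t1.c1}.

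\textbf{Testing the Jacobian equation.}  The second ingredient is to integrate the conservation law \eqref{consjac} against a test function $\phi$ with $\|\phi\|_{X_{\ln}}\le 1$ and integrate in time, obtaining, after two integrations by parts,
\[
\int \phi \, d\bigl[J(u(t))-J(u_0)\bigr] \ = \ \int_0^t \!\! \int \J_{kl}\,(\p_k\p_m \phi)\,(u_{x_m}, u_{x_l})\,dx\,ds.
\]
On the ODE side, \eqref{PV2} gives the exact derivative
\[
\frac d{dt}\Bigl(\pi\sum_j\phi(a_j(t))\Bigr) = 2\pi^2 \sum_{j\ne k}\nabla\phi(a_j)\cdot K(a_j-a_k).
\]
Bridging the two requires a Sandier-Serfaty-type product estimate: provided $\Sigma(t)+\delta(t)$ is small, the quadratic form $\int \J_{kl}(\p_k\p_m\phi)(u_{x_m}, u_{x_l})\,dx$ equals $2\pi^2\sum_{j\ne k}\nabla\phi(a_j)\cdot K(a_j-a_k)$ up to an explicit error built from $\Sigma,\delta,\e^\alpha,n$, and the interaction scale $\rho_a^{-1}$.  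This product estimate in turn rests on a matching quantitative lower bound $\E_\e(u)\ge W_\e(a) - \text{errors}$, which I would derive by adapting the vortex-ball constructions of \cite{JSp2, BJS} to the weighted infinite-energy setting.  The conservation $\Sigma\equiv\Sigma(0)$ then converts the lower bound into a sharp asymptotic that isolates the Kirchhoff-Onsager Hamiltonian structure.

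\textbf{Bootstrap closure.}  With these tools the proof becomes a continuity argument.  Let $T^\sharp$ be the supremum of $t\le\tau_\star$ on which $\delta(s)\le\e^{1/3}$ for all $s\le t$.  Subtracting the two displays above and taking the supremum over admissible $\phi$ gives a differential inequality for $\delta$ whose accumulated error, integrated in time, is controlled exactly by the quantity $Cn\int_0^t\rho_{a(s)}^{-2}\,ds$; this is precisely the quantity that \eqref{Tstar.def} bounds by $\logep$.  A (possibly nonlinear) Gronwall argument then yields $\delta(t)\le \frac 12\e^{1/3}$ throughout $[0,\tau_\star]$, strictly improving the bootstrap assumption and forcing $T^\sharp=\tau_\star$.

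\textbf{Main obstacle.}  The heart of the difficulty, and the novelty advertised in Section~\ref{subs:wknorm}, is producing the product estimate with errors that remain controlled as $n\to\infty$ on the unbounded plane $\R^2$.  Two features conspire to make this delicate.  First, test functions $\phi\in X_{\ln}$ need not have compact support, so the energy localization near each vortex must be performed without unbounded boundary contributions at infinity; this is exactly what motivates the logarithmic weight $1+\ln^+|x|$ in the definition of $X_{\ln}$, which is strong enough to tame the slow decay of the Biot-Savart kernel.  Second, the pairwise sum $\sum_{j\ne k}\nabla\phi(a_j)\cdot K(a_j-a_k)$ involves order $n^2$ interactions, so constants in the underlying Sandier-Serfaty inequalities must be tracked as explicit functions of $n$ rather than absorbed into $O(1)$.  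The constraint $n\le\e_0\logep^{1/2}$ and the form of $\tau_\star$ are the outputs of balancing the $n$- and $\rho_a$-dependences against the ambient $\e^\alpha$ core error, and I expect the bulk of the technical labor to lie in producing a single uniform product estimate that realizes this balance in the $X_{\ln}^*$ topology.
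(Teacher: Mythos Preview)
Your high-level outline is right in two respects: $\Sigma(t)$ is indeed conserved (this is exactly \eqref{cons.Sig} in the paper), and the conclusion does follow from a continuity/Gr\"onwall argument closing at $\tau_\star$.  But the proposed implementation has a genuine gap at the ``bootstrap closure'' step, and the paper's proof works differently in a way that matters.

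The difficulty is that testing \eqref{consjac} against a generic $\phi\in X_{\ln}$ and then taking the supremum does not yield a usable differential inequality for $\delta(t)$.  The stress term $\int \J_{kl}(\p_k\p_m\phi)(u_{x_m},u_{x_l})\,dx$ involves $\nabla^2\phi$ with no weight, and near each vortex core the energy density is of order $\logep$; your ``product estimate'' would have to cancel this to extremely high precision uniformly in $\phi$, which is not what the available $\Gamma$-stability estimates (Proposition~\ref{Propgammastability}) deliver.  What they deliver is control of $\|j(u)/|u| - j(u_*(\xi))\|_{L^2}$ \emph{away from} the vortex cores, with a bound of order $\sqrt{\Sigma(u;\xi)}$.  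Since $\Sigma(u;\xi)\lesssim \frac n{\rho_a}\sum|\xi_j-a_j|$, the resulting error is of order $\sqrt{\eta}$, not linear in $\eta$, and Gr\"onwall does not close.

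The paper resolves this in two moves you are missing.  First, it does not track $\delta(t)$ directly but rather the scalar $\eta(t)=\sum_j|\int J(u)\Phi_j\,dx|$ built from \emph{specific} test functions $\Phi_j(x,t)=\varphi(x-a_j(t),\rho_{a(t)})$ whose Hessians vanish on $B_{\rho_a}(a_j)$; this kills the core contribution to the stress integral and lets the $\Gamma$-stability estimate apply term by term (Lemma~\ref{decomp.etadot}).  Proposition~\ref{P2} is invoked beforehand to produce accurate vortex locations $\xi_j$ with $\|J(u)-\pi\sum\delta_{\xi_j}\|_{X_{\ln}^*}\le\e\calP$, so that $u_*=u_*(\xi)$ is the right comparison map.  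Second, even after this reduction the cross terms $T_6,T_7$ still carry a $\sqrt{\eta}$ factor (Proposition~\ref{propgronwall}).  The paper removes it by a Hodge decomposition $j(u)-j(u_*)=f+g$ and a time-average $\langle\cdot\rangle_{\delta_\e}$ with $\delta_\e=\e^{2/5}$: the curl part $g$ is small by \eqref{xis.def} and Lemma~\ref{poissonX}, while the divergence part $f$ satisfies $\nabla\cdot f=\frac12\p_t(|u|^2-1)$, so averaging in $t$ turns it into an $O(\e/\delta_\e)$ telescoped boundary term (Section~\ref{section.currentbounds}).  Only after this does one obtain the linear inequality $\frac d{dt}\langle\eta\rangle_{\delta_\e}\lesssim \frac n{\rho_a^2}\langle\eta\rangle_{\delta_\e}+\e^{1/2}\calP$ to which Gr\"onwall applies, with the integrated coefficient being exactly the quantity in \eqref{Tstar.def}.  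Your sketch does not contain either the $\Phi_j$ structure or the time-averaging, and without them the argument does not close.
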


\begin{remark}
Note that $a(t) = (a_1(t),\ldots, a_n(t))$ solves
the point vortex ODE \eqref{PV2} as scaled in Theorem 
\ref{thmdynamics}
if and only if $b(t) = a(\frac t{2\pi n})$ solves
the point vortex system
\eqref{ODE} in the scaling of Theorem
\ref{Thm.Schochet}. This can be seen as the reason for the
$t\mapsto \frac t{2\pi n_\e}$ rescaling in the definition \eqref{vrv.defs}
of the rescaled vorticity $\widetilde \omega_\e$.
\label{rem:trescale}\end{remark}

\begin{remark}\label{whypositive}
{The assumption that all vortices have the same sign
is used only to deduce  bounds, global in $t$,
on quantities such as $\rho_{a(t)}$ and $R_{a(t)}$
associated to a solution $a(t)$
of \eqref{PV2} satisfying \eqref{Mbounds2} at time $t=0$.
Thus our results apply also to collections of vortices of mixed
sign, as long as they satisfy the relevant bounds.
However, we know of no way to verify bounds, except in
very special cases, without
the assumption that all vortices are positive.}
\end{remark}

\begin{remark}\label{rem.tau}
{In \cite{JSp2}, we proved that estimates similar to \eqref{t1.c1} hold
(but on a bounded domain and with finite energy) for $0\le t \le \tau_\star^{old}$,
where
\begin{align*}
\tau_\star^{old} 
&:= \sup \left\{ T > 0 \ : C {n}  T \max_{0\le t \le T} \rho_{a(t)}^{-2} \ \le  |\ln \e|  \right\}\\
&
\hspace{9em}
\le
\sup \Big\{ T > 0 \ : C {n}  T\barint_{[0,T]} \rho_{a(t)}^{-2} \ \le  |\ln \e|  \Big\}
= \tau_\star.
\end{align*}
The $max$ appearing in the formula for $\tau_\star^{old}$ makes it  hard to estimate and  sensitive to bad behavior on small sets.
The fact that we have replaced a $max$  with an average
is vital for our discussion of randomly
chosen initial vortices in Section \ref{S:hydro}.
}\end{remark}

\begin{remark}\label{rem:more}
The proof of Theorem \ref{thmdynamics} yields a number of other estimates that we have not recorded here.
In particular, it shows that estimates \eqref{xis.def} and \eqref{Sigma.est1} - \eqref{gstab.ref3},
with $\eta(t)\le \e^{1/3}$,
hold for  $0 \le t \le \tau_\star$ where $\eta(t)$ is a nice proxy for 
{the aggregate distance between the actual vortex locations and
the positions predicted by the point vortex ODE, see \eqref{eta.def} for a definition}. Among other conclusions, these imply that for $0\le t \le \tau_\star$, the velocity
field $j(u)(t)$ is very close in certain norms to the velocity field associated to a collection
of vortices located at points $\xi_1(t),\ldots, \xi_n(t)$ such that $\sum |\xi_i(t) - a_i(t)| \le
C \e^{1/3}$.
\end{remark}

\subsection{organization of this paper}\label{subs.org}
{As mentioned above, the bulk of this paper is devoted to the proof
of Theorem  \ref{thmdynamics}, the main ingredients of which 
are the following.
\begin{itemize}
\item 
We develop some criteria that allow us to conclude that if a function $u$ satisfies  certain hypotheses, then the associated vorticity $J(u)$
 is  close to a sum of point masses 
$\pi \sum_{i=1}^n \delta_{\xi_i}$. This is done in Section \ref{sec.loc}.
\item
If $J(u) \approx \pi \sum_{i=1}^n \delta_{\alpha_i}$, then
a natural notion of ``surplus energy" is
the difference between the energy of $u$ and  (a good lower bound for) the
minimal energy
needed for a wave function with vortices at the points $\alpha_1\ldots, \alpha_n$,
see \eqref{t1.h2} above.
We show in Section \ref{s:gstab} that our surplus energy controls 
various quantities relating for example to the difference between
the current $j(u(t))$ and the model  current $2\pi \sum_{i=1}^n K(x-\xi_i)$ generated by ideal point vortices at $\xi_i(t),i=1,\ldots, n$.
\end{itemize}
The above parts of our argument are purely variational and do not rely in any way on the evolution
equation \eqref{nls}.
\begin{itemize}
\item
Considering now solutions of \eqref{nls},
we introduce in Section \ref{sec.gronwall} 
a scalar quantity $\eta(t)$ such that $\eta(t) \approx\sum_{i=1}^n |\xi_i(t)-a_i(t)|$
(the sum of the distances between the actual vortex locations
and the locations given by the point vortex ODE \eqref{PV2}).
This quantity $\eta(t)$ has the feature that it controls the surplus energy,
and the proof of Theorem \ref{thmdynamics} reduces to controlling the
growth of $\eta$.
\item This is carried out in Sections \ref{S.etadot}-\ref{S.conc} via Gr\"onwall's inequality, making strong use
of the variational estimates mentioned above, as well as conservation
laws for the Gross-Pitaevskii equation, discussed above.
\end{itemize}
In fact these ingredients are all present, in some form, in all rigorous work on vortex dynamics in
\eqref{nls}, dating back to \cite{CJ0}. The distinctive
features of our analysis arise from
the quantitative nature of our estimates, together with
the fact that we need notions of ``surplus energy"
and (especially)  ``closeness of $J(u)$ to  $ \pi \sum_{i=1}^n \delta_{\xi_i}$''
adapted to the function space $[U^D]+H^1$ in which we work.
}

{Our surplus energy 
is built out of the energy $\E_\e(u)$ introduced in \cite{BS} and already used in \cite{BJS} for the study
of vortex dynamics in infinite-energy solutions on $\R^2$. A significant
difference between our analysis and that of \cite{BJS}
is that the latter relies on variational estimates (along the lines described above)
on balls $B_R$ for large $R$, with errors that depend on $\e$ and $R$
in a way that is not always explicit.
Results on vortex dynamics are
proved  by first letting $\e\to 0$
and then $R\to \infty$, using conservation laws
and arguments based on
Almeida's notion of topological sectors, see
\cite{A}, 
to control the  energy at infinity. }

{This procedure bypasses a large number of difficulties, 
at the expense of making it impossible to formulate refined estimates
for solutions of \eqref{nls} for small but positive $\e$, of the sort developed in \cite{JSp2}
for finite-energy solutions on bounded domains and pursued
here on $\R^2$. Such estimates are necessary for applications
such as the hydrodynamic limit in Theorem \ref{T1}. We therefore
proceed by adapting to our setting the more quantitative approach 
of \cite{JSp2}. Throughout our arguments, a crucial role is played by the 
$\|\cdot \|_{X_{\ln}^*}$ norm, introduced in Section \ref{subs:wknorm},
which we use to estimate the extent to which $J(u) \approx \pi \sum \delta_{a_i}$.
In particular, we prove in 
Section \ref{sec.annuli} that
information about $J(u)$ in the $\|\cdot \|_{X_{\ln}^*}$ norm implies
lower energy bounds outside large balls. This replaces the arguments from
\cite{BJS} based on topological sectors. Other aspects of the $\| \cdot \|_{X_{\ln}^*}$
norm that we need include an interpolation inequality 
(Lemma \ref{interp.X.W}), elliptic estimates (Lemma \ref{poissonX}),
integration by parts involving certain functions that diverge at infinity (proof of 
Proposition \ref{Propgammastability}), 
the continuity of the Jacobian as a map into $X_{\ln}^*$  (Lemma \ref{continuityJacobian}), 
and various analogs of 
rather standard estimates on bounded domains (for example
Lemma  \ref{L.JJprime}). 
}

{The overall structure of the paper follows that of \cite{JSp2}, and so we refer, when possible, to prior arguments.  On the other hand where significant differences arise, such as those described above, we have included full details.  We have also tried to streamline or improve proofs, compared to those of \cite{JSp2}. 
One such  improvement is described in Remark
\ref{rem.tau} above.}

{Our analysis begins in Section \ref{sec.prelim}, which collects some
notation and establishes some basic and rather straightforward properties of
the $\| \cdot \|_{X_{\ln}^*}$ norm, the energy $\E_\e$,
renormalized energy $W(\alpha)$, and canonical harmonic map $u_*$,
and concludes in Section \ref{S:hydro} with the proof of Theorem \ref{T1},
together with a related result, see Theorem \ref{T.random}, concerning 
sequences of solutions with randomly chosen initial vortex locations.
}

\section{preliminaries} \label{sec.prelim}

\subsection{ some notation, and remarks about parameters}\label{ss.noworries}

{We will write $\chi_R$, for $R\ge 1$, to denote a family of functions satisfying
\begin{equation}  \label{chiRdef1}
\begin{split}
\chi_R =  \left\{ 
\begin{array}{lll}    1 & \hbox{ in } B_R \\
0  & \hbox{ in } \R^2 \backslash B_{2R} ,  \end{array} \right.
\end{split}
\end{equation}
and
\begin{equation}
\mbox{ $0 \leq \chi_R \leq 1$ in $B_{2R} \backslash B_R$, \qquad \quad
 $| \nabla \chi_R| \lesssim R^{-1}$. }
\label{chiRdef2}\end{equation}
}

{We write 
\[
A \lesssim B
\]
if there exists some universal constant $C$ such that $A\le CB$.
}
We will always follow the notational convention that
\begin{equation}\label{P.def}
\calP \mbox{ denotes a polynomial function of } \  n, \, R_a,  \, \Sigma(u;a), \, \E_\e(u),  \rho_a^{-1},
\end{equation}
where $\Sigma(u;a,d) = \E_\e(u) - W(a,d)$. 
The coefficients in $\calP$ are always understood to be independent of $\e$ and of all other parameters. 
It will always be
clear which function $u$ and points $a\in \R^{2n}$ we have
in mind. We will 
allow $\calP$ to change from one line to the next.

In Theorem \ref{thmdynamics} we assume that  $0<\e<\e_0$,  $d_i=1$ for all $i$, and
\begin{equation}
n \le \e_0 |\ln \e|^{1/2},
\qquad\quad
\frac 1{n}\sum_{j} |a_j|^2 \ \le M_0,\qquad
\quad
\frac {-1}{n(n-1)}\sum_{ j\ne k}  \ln |a_j -a_k| \ \le M_0.
\label{prevailing}\end{equation}
{These assumptions imply that 
\[
 \sum_{i\ne j} |a_i - a_j|^2 - \ln|a_i-a_j| \le M_1n^2 := 5M_0n^2.
\]
Since $|x|^2 - \ln|x| > 0$ for all $x$,  it follows that every term in
the sum is bounded by $M_1n^2$, and hence that
$- \ln |a_i - a_j| \le M_1 n^2$ for all $i\ne j$.
Thus \begin{equation}
\e^{M_1 \e_0^2} \le \exp(-M_1n^2) \le 4\rho_{a}\le \min_{i\ne j}|a_i-a_j|,
\qquad \quad
R_{a} \le 4\sqrt{M_0} \e_0 |\ln \e|^{1/2}.
\label{rhoRbds}\end{equation}
In particular, by choosing $\e_0$ small (depending on $M_1 = 5M_0$),} we can arrange that $\rho_{a}$ is larger than any
fixed positive power of $\e$.
We also deduce from \eqref{prevailing}  that
\begin{equation}
-n^2 \ln( \frac 12 R_{a})  \ \le \ 
{\frac 1 \pi W(a) =  - \sum_{j\ne k} \ln |a_i - a_k| \  \le M_0 n^2}
\label{Wbounds}\end{equation}
so it follows from \eqref{rhoRbds} and the fact that $n \le \e_0 |\ln \e|^{1/2}$ that
if $\e_0$ is small enough, then
\begin{equation}
c n| \ln \e| \ \le \ W_{\e}(a)   \  =  W(a) + n(\pi\logep +\gamma) 
\ \le  \ C n |\ln \e| \ \le \  C |\ln \e|^{3/2}.
\label{Wep.bounds}\end{equation}
It will also be the case in many of our arguments that 
$|\Sigma(u, a)| \le 1$, where $\Sigma(u,a)  =\E_\e(u) - W_\e(a)$.
Indeed, we often assume that $\Sigma \le 1$, and we will
prove lower bounds which imply that $\Sigma\ge -1$
under hypotheses that will prevail throughout 
most of our proofs.
It follows that under these conditions, 
\begin{equation}
c n| \ln \e| \ \le \ \E_\e(u)  
\ \le  \ C n |\ln \e| \ \le \  C |\ln \e|^{3/2}.
\label{bbEbounds}\end{equation}
From these facts, we see that 
for any given term of the form $\calP$
described in \eqref{P.def},
\begin{equation}
\forall \beta>0,\quad \exists \e_0 >0 \mbox{ such that } \ \calP \le \e^{-\beta} \quad \mbox{ for }0<\e<\e_0
\label{Psmall}\end{equation}
when \eqref{prevailing} holds {and $|\Sigma|\le 1$}, where $\e_0$ depends only
on $M_0$, $\beta$ and on the coefficients in $\calP$. 


\subsection{a weak norm}\label{ss:wknorm}

{In the introduction we have defined the ${X_{\ln}^*}$ norm, see \eqref{Xstar.def}, which
we often use to measure errors in vorticity. 
The direct analog of the norm used for this purpose
in \cite{JSp2}
is the $\dot{W}^{-1,1}(\R^2)$ norm, which we define
by
\[
\LN \mu \RN_{\dot{W}^{-1,1}(\Omega)} :=  \sup \left\{ \int \phi \ d\mu \ : \ \phi\in C^1_c(\Omega),\ \ \  \| \nabla \phi \|_{L^\infty(\Omega)} \le 1 \right\}.
\]
This norm is however too strong for our needs
in this paper.  
In particular, it turns out that the map $u \mapsto J(u)$ is
not continuous as a function from $[U^D] + H^1$ 
into $\dot{W}^{-1,1}(\R^2)$ --- see Example \ref{ex.discont} at the end of this section.
The following lemma shows that the $X_{\ln}^*$ norm, which is a sort of {\em weighted} $\dot{W}^{-1,1}$ norm,
does not suffer from this drawback.}

\begin{lemma}  \label{continuityJacobian}
Suppose that $U\in H^1_{loc}(\R^2)$ and that $|DU(x)| \le  C(1+|x|)^{-1}$.

If $u \in U +H^1(\R^2)$ and $w\in H^1(\R^2)$ then
\begin{equation}
\| J(u+w) - J(u)\|_{X^*_{\ln}} \le 
C(u) ( \| w \|_{H^1}  + \| w \|_{H^1}^2 ).
\label{CJ1}\end{equation}
\end{lemma}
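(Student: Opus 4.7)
The plan is to test $J(u+w)-J(u)$ against $\phi \in C_c^1(\R^2)$ with $\|\phi\|_{X_{\ln}}\le 1$ and integrate by parts. Since $J(v) = \tfrac12 \nabla \times j(v)$ and $\phi$ has compact support,
\[
\int \phi\, \big(J(u+w)-J(u)\big)\, dx = -\tfrac12 \int \nabla^\perp\phi \cdot \big[j(u+w)-j(u)\big]\, dx,
\]
and a direct expansion of $j(v) = (iv,\nabla v)$ yields $j(u+w)-j(u) = (iu,\nabla w) + (iw,\nabla u) + (iw,\nabla w)$.

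The central algebraic step is to show that the two linear cross terms are equal after pairing with $\nabla^\perp\phi$. Using the product rule $\partial_k(iu,w) = (i\partial_k u,w)+(iu,\partial_k w)$ together with the identity $(ia,b)=-(ib,a)$, one rewrites $(iu,\partial_k w) = \partial_k(iu,w) + (iw,\partial_k u)$. Pairing with $(\nabla^\perp\phi)_k$, summing in $k$ and integrating, the first piece contributes $\int \nabla^\perp\phi \cdot \nabla(iu,w)\,dx = -\int \dv(\nabla^\perp\phi)\,(iu,w)\,dx = 0$ since $\dv \nabla^\perp\phi \equiv 0$; the integration by parts is justified because $(iu,w)\in W^{1,1}_{loc}$ (by the Sobolev product rule, since $u\in H^1_{loc}$ and $w\in H^1$) and $\phi$ has compact support. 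Hence
\[
\int \phi\,\big(J(u+w)-J(u)\big)\, dx = -\int \nabla^\perp\phi \cdot (iw,\nabla u)\, dx - \tfrac12 \int \nabla^\perp\phi \cdot (iw,\nabla w)\, dx.
\]

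It remains to estimate the two integrals. Note that $\|\phi\|_{X_{\ln}}\le 1$ gives both $\|\nabla\phi\|_{L^\infty}\le 1$ and the weighted pointwise bound $|\nabla\phi(x)| \le (1+\ln^+|x|)^{-1}$. The quadratic term is handled via $\|\nabla\phi\|_{L^\infty}\le 1$ and Cauchy--Schwarz, yielding $\tfrac12\|w\|_{L^2}\|\nabla w\|_{L^2}\le \tfrac12\|w\|_{H^1}^2$. For the cross term, write $u = U+h$ with $h\in H^1(\R^2)$; the $h$-contribution is bounded by $\|w\|_{L^2}\|\nabla h\|_{L^2}\le \|h\|_{H^1}\|w\|_{H^1}$ in the same way. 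The $U$-contribution is the delicate one, since $\nabla U\notin L^2(\R^2)$. Here I would use the weighted bound on $\nabla\phi$ together with the hypothesis $|DU(x)|(1+|x|)\le C$ and Cauchy--Schwarz to get
\[
\Big|\int \nabla^\perp\phi\cdot (iw,\nabla U)\,dx\Big| \le C\,\|w\|_{L^2}\left(\int_{\R^2}\frac{dx}{(1+\ln^+|x|)^2(1+|x|)^2}\right)^{1/2} \le C(U)\,\|w\|_{H^1},
\]
the weighted $L^2$-integral being finite because in polar coordinates its integrand behaves like $r^{-1}(\ln r)^{-2}$ at infinity.

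Taking the supremum over admissible $\phi$ then gives $\|J(u+w)-J(u)\|_{X_{\ln}^*} \le C(u)(\|w\|_{H^1}+\|w\|_{H^1}^2)$. The principal obstacle is precisely the $U$-contribution: with the unweighted $\dot W^{-1,1}$ norm one would naively be left with $\int |\nabla\phi|\,|w|\,|\nabla U|\,dx \le \|w\|_{L^2}\|\nabla U\|_{L^2}=\infty$, and it is only the logarithmic weight built into the $X_{\ln}$ norm that compensates the non-$L^2$ decay of $\nabla U$ at infinity. This is the entire reason for working with $X_{\ln}^*$ rather than the direct analog of the norm used in \cite{JSp2}.
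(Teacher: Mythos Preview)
Your proof is correct and is essentially the same as the paper's. The paper uses the $\R^2$-valued viewpoint $J(u)=u_{x_1}\times u_{x_2}$ and integrates by parts directly, while you work through the current $j(u)$ and the identity $(iu,\partial_k w)=\partial_k(iu,w)+(iw,\partial_k u)$; these are the same computation in different notation, since $(ia,b)=a\times b$ under the identification $\C\cong\R^2$. The only cosmetic difference is that the paper observes at the outset that $Du/(1+\ln^+|x|)\in L^2(\R^2)$ (which follows immediately from $u\in U+H^1$ and the decay hypothesis on $DU$), thereby avoiding your explicit split $u=U+h$ and handling the cross term in one line as $\|Du/\omega\|_{L^2}\|w\|_{L^2}$.
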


\begin{proof}
{For this proof only, we} will write $\omega(x) := 1+ \ln^+ |x|$. 
Observe that our hypotheses imply that
$DU/\omega \in L^2(\R^2)$, and it immediately follows
that $Du/\omega \in L^2(\R^2)$.

Now fix $\zeta\in C^1_c(\R^2)$ such that  $\|\zeta\|_{X_{\ln}}\le 1$, or equivalently $|D\zeta(x)| \le 1/\omega(x)$ for all $x$.
It is convenient to think  of $u$ and $w$ as $\R^2$-valued functions and to write
for example $J(u) = u_{x_1}\times u_{x_2}$.
Temporarily assuming that $u$ is $C^2$,
we expand the determinant and  integrate by parts to find that
\begin{align*}
\int \zeta [ J(u+w) - J(u)] \ dx
&=
\int \zeta \left( u_{x_1}\times w_{x_2} + w_{x_1}\times u_{x_2} + w_{x_1}\times w_{x_2}\right) dx \\
&=
 \int \zeta_{x_2}\,  w\times u_{x_1}  - \   \zeta_{x_1}\, w\times u_{x_2}  \ +  \ 
 \frac 12 (\zeta_{x_2}\, w\times w_{x_1} - \zeta_{x_1} w\times w_{x_2}) \ dx \\
& \le
\int
|w| \frac {|Du|}\omega +  \ |w| \ |\nabla w| \ dx \\
&  \le \| \frac {Du} \omega\|_{L^2} \| w \|_{L^2}   + \| w\|_{H^1}^2 .
\end{align*}
{The same inequality  holds for $u\in U +H^1(\R^2)$,}
by density, and  \eqref{CJ1}
follows with
$C(u) = \max (  \| Du/\omega\|_{L^2}  , 1)$.
\end{proof}


We will sometimes need to consider expressions of the form
$\int \phi \ d\mu$, where $\| \mu\|_{X_{\ln}^*}<\infty$ and 
$\phi$ is a function with, say, logarithmic growth at infinity.
The next lemma assures that such expressions make sense.

\begin{lemma}
Assume that $\phi\in L^\infty_{loc}(\R^2)$ is Lipschitz continuous,
and that there exists some $p>0$ such that $|D\phi(x)| \le C\min(1,  |x|^{-p})$
for a.e. $x$.

Let $(\chi_R)_{R\ge 1}$ satisfy \eqref{chiRdef1}, \eqref{chiRdef2}.
Then the definition
\[
\int \phi \ d\mu  := \ \lim_{R\to \infty} \int \chi_R \ \phi \ d\mu
\]
makes sense in that the limit on the right-hand side exists and is independent of the specific choice
$(\chi_R)_{R\ge 1}$. In addition,
\begin{equation}
\int \phi \ d\mu  \ \le \| \phi\|_{X_{\ln}} \ \|\mu\|_{X_{\ln}^*}. 
\label{dual.est}\end{equation}
\label{lem.dual}\end{lemma}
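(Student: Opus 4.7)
The plan is to define $\int\phi\, d\mu$ as the Cauchy limit of $\int\chi_R\phi\, d\mu$ and to extract both convergence and the bound \eqref{dual.est} from a single uniform control on $\|\chi_R\phi\|_{X_{\ln}}$. Since $\chi_R\phi$ is Lipschitz with compact support, standard mollification lets it be paired against $\mu\in X_{\ln}^*$ with the estimate
\[
\left|\int\chi_R\phi\, d\mu\right|\le \|\chi_R\phi\|_{X_{\ln}}\|\mu\|_{X_{\ln}^*},
\]
so the whole argument reduces to controlling this $X_{\ln}$-norm as $R\to\infty$.

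The key preliminary is a growth bound on $\phi$: integrating $|D\phi(x)|\le C\min(1,|x|^{-p})$ radially from the origin, one finds $|\phi(x)|\lesssim 1+|x|^{1-p}$ when $p<1$, $|\phi(x)|\lesssim 1+\ln|x|$ when $p=1$, and $\phi$ bounded when $p>1$; in all three cases
\[
\lim_{|x|\to\infty}\frac{(1+\ln^+|x|)|\phi(x)|}{|x|}=0,
\]
which is the one and only place the strict positivity of $p$ matters. From $D(\chi_R\phi)=\chi_R D\phi+\phi D\chi_R$ the first term is dominated pointwise by $\|\phi\|_{X_{\ln}}$, while the second is supported in $B_{2R}\setminus B_R$ with $|D\chi_R|\lesssim 1/R$ and is therefore bounded by $C(1+\ln^+|x|)|\phi(x)|/R$, which is $o(1)$ uniformly on that annulus by the growth estimate. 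Hence $\limsup_R\|\chi_R\phi\|_{X_{\ln}}\le \|\phi\|_{X_{\ln}}$.

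The same computation handles Cauchy-ness and independence of the cutoff sequence simultaneously: for $R<S$, the function $(\chi_S-\chi_R)\phi$ is supported in $B_{2S}\setminus B_R$, its derivative on the central band $B_S\setminus B_{2R}$ equals $D\phi$ with weight $(1+\ln^+|x|)|x|^{-p}\to 0$, and on the two transition annuli it is controlled by $(1+\ln^+|x|)|\phi|/R$ and $(1+\ln^+|x|)|\phi|/S$, both $o(1)$. This gives $\|(\chi_S-\chi_R)\phi\|_{X_{\ln}}\to 0$ as $R,S\to\infty$, so $\{\int\chi_R\phi\, d\mu\}$ is Cauchy, and running the same estimate with any other admissible family $\tilde\chi_R$ in place of $\chi_S$ shows independence of the choice of $(\chi_R)$. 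The bound \eqref{dual.est} then falls out by passing to the limit in the displayed inequality above. The only delicate point I anticipate is the subcritical regime $p\le 1$, where $\phi$ can grow unboundedly and the logarithmic weight is only barely strong enough to absorb $|\phi|/R$ on the annulus; the growth estimate above is designed precisely to package this cancellation uniformly across the three cases so that it enters the argument only once.
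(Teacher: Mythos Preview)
Your proof is correct and follows essentially the same route as the paper's: both arguments obtain a sublinear growth bound on $\phi$ by integrating $|D\phi|$ radially, use it to show that the cross term $(1+\ln^+|x|)|\phi|\,|D\chi_R|$ vanishes uniformly on the transition annulus, and then deduce both $\|\chi_R\phi\|_{X_{\ln}}\to\|\phi\|_{X_{\ln}}$ and the Cauchy property $\|(\chi_{R_1}-\chi_{R_2})\phi\|_{X_{\ln}}\to 0$. Your treatment is slightly more explicit in two places---the case split on $p$ for the growth of $\phi$, and the three-region decomposition of $\operatorname{supp}(\chi_S-\chi_R)$---but these are expository choices rather than substantive differences; the paper simply writes ``similar computations'' for the Cauchy step.
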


\begin{remark}
It follows from the Lemma that if $\| \mu\|_{X_{\ln}^*}<\infty$, then
$\int 1 \ d\mu = 0$.
\end{remark}

\begin{proof}
First note that for any $x\in \R^2$,
\[
|\phi(x)| \le |\phi(0)| + \int_0^{|x|} |D\phi|(s \frac x{|x|}) ds \  \le |\phi(0)| +  C\int_0^{|x|}\min(1, s^{-p})ds .
\]
It follows from this and the assumptions on $\chi_R$ that 
\[
\sup_{x\in \R^2} (1+|\ln^+|x|) |\phi|\, |D\chi_R| \to 0 \qquad\mbox{ as }R\to \infty.
\]
As a result, 
\begin{equation}
\| \chi_R \phi\|_{X_{\ln}} \to
\| \phi\|_{X_{\ln}}  \qquad  \mbox{ as }R\to \infty,
\label{normlim}\end{equation}
and so
\[
\limsup_{R\to \infty}\int (\chi_R\phi) d\mu \le \| \mu\|_{X_{\ln}^*} \ 
\| \phi\|_{X_{\ln}} .
\]
Next, {for $R_1\ne R_2$, similar}
computations to those above show that
\begin{align*}
\| (\chi_{R_1} - \chi_{R_2}) \phi \|_{X_{\ln}} 
\to 0\quad \ \ 
\mbox{ as }\min (R_1, R_2 )\to \infty.
\end{align*}
It follows that
\[
\int (\chi_{R_1} - \chi_{R_2})\phi \ d\mu \    \le 
\| (\chi_{R_1} - \chi_{R_2}) \phi \|_{X_{\ln}}
  \| \mu \|_{X_{\ln}^*}   \to 0\quad \ \  \mbox{ as }\min ( R_1, R_2)\to \infty.
\]
These estimates imply the conclusions of the lemma.
\end{proof}

The next lemma relates that $X_{\ln}^*$ norm and the $\dot{W}^{-1,1}$ norm.

\begin{lemma}  \label{lemmaW1controlfromX}
 If $\|\mu \|_{X_{\ln}^*}<\infty$
and $\Omega$ is any bounded set, then
\begin{equation}
\left\| \mu \right\|_{\dot{W}^{-1,1} (\Omega)} \leq \sup_{x\in \Omega} (1+ \ln^+ |x|)\  \| \mu \|_{X_{\ln}^*}.
\label{convertnorm}\end{equation}
\end{lemma}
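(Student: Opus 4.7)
The plan is to unwind both norms through their definitions and use the fact that any admissible test function for the $\dot W^{-1,1}(\Omega)$ sup is, up to a simple rescaling, admissible for the $X_{\ln}^*$ sup.

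First I would fix an arbitrary $\phi \in C^1_c(\Omega)$ with $\|\nabla \phi\|_{L^\infty(\Omega)} \le 1$. Since $\phi$ vanishes outside $\Omega$, its derivative $D\phi$ is supported in $\overline{\Omega}$, so
\[
\|\phi\|_{X_{\ln}} = \operatorname{ess\,sup}_{x \in \R^2}\bigl((1+\ln^+|x|)|D\phi(x)|\bigr) = \operatorname{ess\,sup}_{x \in \Omega}\bigl((1+\ln^+|x|)|D\phi(x)|\bigr) \le M,
\]
where $M := \sup_{x \in \Omega}(1+\ln^+|x|)$, which is finite because $\Omega$ is bounded.

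Next I would apply the definition \eqref{Xstar.def} of $\|\cdot\|_{X_{\ln}^*}$ to the test function $\phi/M$ (which has $X_{\ln}$-norm at most $1$) to obtain
\[
\int \phi \, d\mu \;=\; M \int \frac{\phi}{M}\, d\mu \;\le\; M\,\|\mu\|_{X_{\ln}^*}.
\]
Taking the supremum over all admissible $\phi$ yields the stated inequality \eqref{convertnorm}.

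I do not anticipate any obstacle here: the entire argument is a direct comparison of the two duality pairings, using only that the competitor test functions are compactly supported in the bounded set $\Omega$ so the logarithmic weight is bounded on their support.
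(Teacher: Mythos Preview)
Your proposal is correct and is essentially identical to the paper's own proof: both bound $\|\phi\|_{X_{\ln}}$ by $\sup_{x\in\Omega}(1+\ln^+|x|)\,\|\nabla\phi\|_{L^\infty}$ for compactly supported test functions in $\Omega$, then appeal to the duality defining $\|\mu\|_{X_{\ln}^*}$. The only cosmetic difference is that the paper states the inequality directly as $\int\phi\,d\mu \le \|\phi\|_{X_{\ln}}\|\mu\|_{X_{\ln}^*}$ rather than rescaling by $M$ first.
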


\begin{proof}
If $\phi\in W^{1,\infty}_0(\Omega)$ then (extending $\phi$
by zero) $\| \phi \|_{X_{\ln}}\le \sup_\Omega(1+\ln^+|x|) \|D\phi\|_{L^\infty(\Omega)} $, so
\[
\int \phi\, d\mu \le \| \phi \|_{X_{\ln}} \| \mu\|_{X_{\ln}^*} \le \| D\phi \|_{L^\infty(\Omega)} \ 
 \sup_\Omega(1+\ln^+|x|)\| \mu\|_{X_{\ln}^*}.
\]
Since $\phi\in W^{1,\infty}_0(\Omega)$ is arbitrary,  the conclusion follows.\end{proof}

\begin{example}\label{ex.discont}
Here we give an example, {not needed in the rest of this paper,}
to prove that $\phi\mapsto J(U^1+\phi)$
is not continuous as a map from $H^1(\R^2)$ into  $\dot{W}^{-1,1}(\R^2)$, 
so that it is indeed necessary
to introduce weaker norms, such as the $X_{\ln}^*$ norm.

First, for $R>1$ let $f_R:\R^2\to \R$ be a function such that
$|\nabla f_R| \le 2$,  $0 \le f_R\le 1$, and 
\begin{align*}
&f_R(x_1,x_2) = 0\mbox{ unless }4R\le x_1 \le 6R \mbox{ and } |x_2|\le R,\\
&f_R(x_1,x_2) = 1 \mbox{ if }4R+1 \le x_1 \le 6R-1\mbox{ and }|x_2|\le R-1.
\end{align*}
Now let $\phi_R := (f_R, 0)$ and $\phi := \sum_{j=1}^\infty \frac 1{j R_j} \phi_{R_j}$ for $R_j = 10^j$.
It is easy to see that
\[
\| \phi_R\|_{L^2}^2 \approx R^2, \qquad
\| \nabla \phi_R\|_{L^2}^2 \approx R,
\]
and since $\phi_{R_j}$ and $\phi_{R_k}$ have disjoint support if $j\ne k$, it follows that
\[
\| \phi \|_{L^2}^2 = \sum_j  (j R_j)^{-2}\| \phi_{R_j}\|_{L^2}^2  \approx \sum_j j^{-2}<\infty.
\]
Similarly $\| \nabla \phi \|_{L^2}^2 < \infty$ and thus $\phi\in H^1$.

We claim however that 
\begin{equation}
\sup \left\{ \int \zeta J(U^1+\phi) \ dx: \zeta \in W^{1,\infty}\mbox{ with compact support}, \mbox{Lip}(\zeta)\le 1
\right\} = +\infty
\label{ctrex}\end{equation}
for $U^1\in H^1_{loc}(\R^2)$ such that $U^1 = \frac x{|x|}$ outside $B_1$. 
To prove this, let $\zeta_R$ be a compactly supported function such that
$\| \nabla \zeta_R \|_\infty \le 1$, 
$\zeta_R(x_1,x_2) = 0$ unless  $2R \le x_1 \le 6R$,
and 
\[
\zeta_R(x_1,x_2) = 2R - (|4R  - x_1|) \mbox{ if } 2R \le x_1\le 6R\mbox{ and }|x_2|\le R.  
\]
Then for any $R\ge 1$, noting that $\zeta_R = 0$ on the support of $J(U^1)$
and arguing  as in the proof of Lemma \ref{continuityJacobian},
\begin{align*}
\int \zeta_R J(U^1+ \phi_R) \ dx
&=
\int \zeta_R  \left[J(U^1+ \phi_R) - J(U^1)\right] dx \\
&=
\int  \zeta_{R,x_2} \ \phi_{R} \times U^1_{x_1}  - \zeta_{R,x_1} \ \phi_R \times  U^1_{x_2}
+ \zeta_R J(\phi_R) \ dx.
\end{align*}
However, $J(\phi_R)\equiv 0$, and $\zeta_{R,x_2} = 0$ on the support of $\phi_R$, so the
first and third  terms vanish. And by an easy explicit computation of the 
remaining term, it follows that
\[
\int \zeta_R J(U^1+ \phi_R) \ dx
\ = \int f_R \frac{ x_1^2}{R^3} \ dx \  \gtrsim  \ R.
\]
Now define $\zeta^m := \sum_{j=1}^m \zeta_{R_j}$. 
Since $J(U^1+\phi) = 0$ away from the support of $\phi$, and noting that
$\mbox{supp}(\zeta_{R_j}) \cap \mbox{supp}(\phi_{R_k})$ is nonempty if any only if 
$j=k$, we find that
\[
\int \zeta^m J(U^1 + \phi) \ dx = \sum_{j=1}^m  \int \zeta_{R_j} J(U^1+ \frac 1{ j R_j} \phi_{R_j}) \ dx
\ \gtrsim \  \sum_{j=1}^m \frac 1j.
\]
Thus $\| J(U^1+u)\|_{\dot{W}^{-1,1}} = +\infty$. 
\end{example}



\subsection{more about energy, and the canonical harmonic map.}
\label{ss2.1}

Given $\alpha\in (\R^2)^n$ and $d\in \Z^n$,  we will write
$u_*(\alpha, d)$ to denote the canonical harmonic map with 
singularities of degree $d_i$ at $\alpha_i$, for $i=1,\ldots, n$,
see \cite{BBH},
defined by
\begin{equation} \label{canonicalharmonicmap}
u_*(\alpha,d)(x)  =  \prod_{j=1}^n \LC {x -  \alpha_j \over |x - \alpha_j|} \RC^{d_j} .
\end{equation}
We will write simply $u_*$ when there is no possibility of confusion. 
One easily checks that
\begin{equation}
j(u_*)(x) = \sum_{i=1}^n d_i \frac{ (x-\alpha_i)^\perp}{|x-\alpha_i|^2}
\ = \ 2\pi \sum_{i=1}^n d_i  K(x-a_i) 
\label{jstar.form}\end{equation}
where $K$ is the Biot-Savart kernel. We remark that
\begin{equation}
\nabla\times j(u_*) = 2\pi \sum_{i=1}^n d_i \delta_{\alpha_i},
\qquad\qquad
\nabla\cdot j(u_*) = 0.
\label{jstar.eqns}\end{equation}

We record some properties of $j(u_*)$ that will be needed in the sequel. These are
mostly well-known and generally easy to verify.   {Given
$\alpha\in (\R^2)^n$, we will write
\begin{equation}
\R^2_\sigma(\alpha) :=  \R^2 \setminus \cup_{j=1}^n B_\sigma(\alpha_j),
\qquad
\qquad
B_{R,\sigma}(\alpha) := B_R \setminus \cup_{j=1}^n B_\sigma(\alpha_j).
\label{cutout}\end{equation}
We will write simply $\R^2_\sigma$ or $B_{R,\sigma}$ when $\alpha$ is clear from the context.}

\begin{lemma}Assume that $\alpha\in (\R^2)^n$ and $d\in \{\pm 1\}^n$,
and let $D = \sum d_i$. 
Then $u_*(\alpha,d)$ satisfies the following.
First,
\begin{equation} \label{Ggradientestimateerror}
\LV   j(u_*)(x)  - { D x^\perp \over |x|^2} \RV \leq C { n R_\alpha \over |x|^2 }\qquad
\mbox{ if }|x|\ge R_\alpha .
\end{equation}

Second, if $\sigma<\rho_\alpha$ then
\begin{equation}
{\left\|  j(u_*) \right\|_{L^\infty(\R^2_\sigma)}} \le  {C n \over \sigma} 
\label{Ggradientestimateerror1}\end{equation}
and
\begin{equation} \label{FiniteWapprox}
\lim_{R\to\infty}\left( \int_{B_{R,\sigma}} \frac 12 |\nabla u_* |^2  dx - \pi D^2 \ln R\right)
=
n \pi \ln \frac 1\sigma + W(\alpha, d) + O( \frac{n^3 \sigma^2}{\rho_\alpha^2}),
\end{equation}
where $W(\alpha, d)$ is defined in \eqref{Wep.def}.
\label{lem.ustar}\end{lemma}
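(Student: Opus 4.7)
The plan is to establish the three bounds using the explicit representation $j(u_*)(x) = \sum_{i=1}^n d_i (x-\alpha_i)^\perp/|x-\alpha_i|^2$ from \eqref{jstar.form}, together with the stream function $H(x) := \sum_{i=1}^n d_i \ln|x-\alpha_i|$, for which $j(u_*) = \nabla^\perp H$ and, because $u_*$ is $S^1$-valued away from its singularities, $|\nabla u_*|^2 = |j(u_*)|^2 = |\nabla H|^2$.

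For \eqref{Ggradientestimateerror1}, the pointwise bound $|j(u_*)(x)| \le \sum_i |x-\alpha_i|^{-1} \le n/\sigma$ on $\R^2_\sigma$ is immediate. For \eqref{Ggradientestimateerror}, I would compare each $K(x-\alpha_i)$ with $K(x)$ via the mean value theorem: since $|DK(y)|\lesssim |y|^{-2}$ and $|x|\ge R_\alpha\ge 4|\alpha_i|$ forces $|x-t\alpha_i|\ge |x|/2$ along the segment $t\in[0,1]$, one has $|K(x-\alpha_i)-K(x)|\lesssim |\alpha_i|/|x|^2$; summing over $i$ and using $\sum_i |\alpha_i| \le n R_\alpha/4$ yields the stated $CnR_\alpha/|x|^2$.

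The substance is in \eqref{FiniteWapprox}. Since $\Delta H=0$ on $B_{R,\sigma}$, integration by parts gives
\[
\int_{B_{R,\sigma}}|\nabla H|^2\,dx \ = \ \int_{\partial B_R} H\,\partial_r H\,ds \ - \ \sum_{i=1}^n \int_{\partial B_\sigma(\alpha_i)} H\,\partial_{\nu_i} H\,ds,
\]
where $\partial_{\nu_i}$ is the outward normal to the small ball $B_\sigma(\alpha_i)$. On $\partial B_R$, decompose $H = D\ln r + h(r,\theta)$ with $|h|\lesssim nR_\alpha/r$; Jensen's formula (valid since $R > R_\alpha \ge |\alpha_i|$) gives $\frac{1}{2\pi}\int_0^{2\pi}\ln|Re^{i\theta}-\alpha_i|\,d\theta = \ln R$, whence $\int_0^{2\pi} h\,d\theta = 0$ and therefore $\int_0^{2\pi}\partial_r h\,d\theta = 0$. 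Only the $h\,\partial_r h$ cross term survives, producing $\int_{\partial B_R}H\,\partial_r H\,ds = 2\pi D^2\ln R + O(n^2 R_\alpha^2/R^2) \to 2\pi D^2\ln R$ as $R\to\infty$. For each inner integral, decompose $H = d_i\ln|x-\alpha_i| + \psi_i(x)$, where $\psi_i := \sum_{j\ne i}d_j\ln|x-\alpha_j|$ is harmonic on $B_\sigma(\alpha_i)$ because $\sigma<\rho_\alpha$. Standard identities (mean value, divergence theorem, Green's identity with $\Delta\psi_i=0$, and $|\nabla\psi_i(\alpha_i)|\lesssim n/\rho_\alpha$) evaluate the four terms and give
\[
\int_{\partial B_\sigma(\alpha_i)} H\,\partial_{\nu_i}H\,ds \ = \ 2\pi d_i^2\ln\sigma \ + \ 2\pi d_i\,\psi_i(\alpha_i) \ + \ O\!\left(\sigma^2 n^2/\rho_\alpha^2\right).
\]
Summing over $i$ with $d_i^2=1$, recognizing $W(\alpha,d) = -\pi\sum_{i\ne j}d_id_j\ln|\alpha_i-\alpha_j|$, dividing by two, and passing $R\to\infty$ yields \eqref{FiniteWapprox}.

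The main obstacle is the outer boundary contribution: the individual sizes of $H$ and $\partial_r H$ on $\partial B_R$ are each too large for $\int H\,\partial_r H - 2\pi D^2\ln R$ to converge by a naive estimate, and only the exact cancellation of the $\theta$-averages of the subleading pieces (via Jensen) forces the remainder to vanish as $R\to\infty$, making the renormalized limit finite. The inner-boundary computation is the classical Bethuel-Brezis-H\'elein renormalized energy calculation, now tracked with quantitative dependence on $n$ and $\rho_\alpha$ to produce the explicit error $O(n^3\sigma^2/\rho_\alpha^2)$.
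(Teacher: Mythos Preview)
Your proof is correct and follows precisely the classical Bethuel--Brezis--H\'elein computation (integration by parts using the harmonic stream function $H$, Jensen's formula for the outer boundary cancellation, mean-value/Green's identities for the inner circles) that the paper invokes by citing \cite{BBH} and Lemma~12 of \cite{JSp2}. Your treatment simply makes explicit, with the required quantitative error tracking in $n,\sigma,\rho_\alpha$, what the paper leaves to those references.
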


\begin{proof}
Estimates \eqref{Ggradientestimateerror} and \eqref{Ggradientestimateerror1}
are easily verified from the explicit formula \eqref{jstar.form} for $j(u_*)$,
and \eqref{FiniteWapprox} is proved following computations in
\cite{BBH}, and keeping track of error terms as in Lemma 12  of \cite{JSp2}.
\end{proof}

We will also need some estimates of $W$, which follow directly 
from the definition \eqref{Wep.def}:
\begin{equation} \label{Wderivatives}
\LV \nabla_i W(\alpha, d) \RV \lesssim {n \over \rho_\alpha}, \quad \LV \nabla_i \nabla_jW(\alpha, d) \RV \lesssim {n \over \rho^2_\alpha}.
\end{equation}

We collect some facts about the dependence of 
$j(u_*(\alpha, d))$ on $\alpha\in \R^{2n}$.

\begin{lemma}
Let $\alpha, \alpha' \in (\R^2)^n$ with $\min_{j=1,\ldots,n} | \alpha_j - \alpha'_j| \leq {\rho_\alpha \over 4}$ and $d \in \{\pm 1\}^n$.  If $r,r' \leq \rho_\alpha$, then
\begin{align}  \label{jstarLipschitz}
\LN j(u_*(\alpha, d)) - j(u_*(\alpha', d)) \RN_{L^\infty(\R^2_r(\alpha) \cap \R^2_{r'}( \alpha'))} 
 & \leq {1 \over \min\{r, r'\}^2} \sum_{j=1}^n \LV \alpha_j -\alpha'_j \RV
 \\
\label{jstarL2bound}
 \LN j(u_*(\alpha,d)) - j(u_*(\alpha',d)) \RN_{L^2(\R^2_r(\alpha) \cap \R^2_{r}(\alpha'))}
&\le 
C\left( 1 +  \sum \log( \frac{|\alpha_j - \alpha'_j|}r )\right),
\end{align}
and for all $1< p < 2$, there exists a constant $C$, depending only on $p$, such that
\begin{align}  \label{jstarLpbound}
\LN j(u_*(\alpha, d)) - j(u_*(\alpha', d)) \RN_{L^p(\R^2)} 
 \leq C n  \LC \sum_{j=1}^n \LV \alpha_j -\alpha'_j \RV \RC^{{2 \over p} - 1}.
 \end{align}
\end{lemma}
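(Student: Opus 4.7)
The plan is to use the explicit formula \eqref{jstar.form} to write
\[
j(u_*(\alpha,d)) - j(u_*(\alpha',d)) = \sum_{i=1}^n d_i\, g_i, \qquad g_i(x) := \frac{(x-\alpha_i)^\perp}{|x-\alpha_i|^2} - \frac{(x-\alpha_i')^\perp}{|x-\alpha_i'|^2},
\]
and to reduce each of the three estimates, by the triangle inequality, to a single-pair bound for $g_i$. It is also useful to note that $g_i = \nabla^\perp \ln(|x-\alpha_i|/|x-\alpha_i'|)$, which makes the dipole cancellation at infinity transparent.

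For \eqref{jstarLipschitz} I would use the fundamental theorem of calculus along the segment $\alpha_i(t) := (1-t)\alpha_i + t\alpha_i'$, writing
\[
g_i(x) = \int_0^1 (\nabla_y h)(x-\alpha_i(t)) \cdot (\alpha_i - \alpha_i')\, dt, \qquad h(y) := \frac{y^\perp}{|y|^2},
\]
together with the pointwise bound $|\nabla h(y)| \lesssim |y|^{-2}$. When $|\alpha_i - \alpha_i'| \le \tfrac12 \min(r,r')$, the segment $\{\alpha_i(t)\}$ stays at distance $\ge \tfrac12 \min(r,r')$ from every $x \in \R^2_r(\alpha) \cap \R^2_{r'}(\alpha')$, and the desired estimate follows directly; when instead $|\alpha_i - \alpha_i'| > \tfrac12 \min(r,r')$, the crude bound $|g_i| \lesssim 1/\min(r,r')$ already implies $|g_i| \lesssim |\alpha_i - \alpha_i'|/\min(r,r')^2$. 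Summing over $i$ yields \eqref{jstarLipschitz}.

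The main obstacle is \eqref{jstarL2bound}, where one must accommodate the logarithmic divergence of $\int |g_i|^2$ near the singularities. Writing $\delta_i := |\alpha_i - \alpha_i'|$, I would split the integration region into a near zone $\{r \le |x-\alpha_i| \le 2\delta_i\} \cup \{r \le |x-\alpha_i'| \le 2\delta_i\}$, where the crude bound $|g_i|^2 \lesssim |x-\alpha_i|^{-2} + |x-\alpha_i'|^{-2}$ integrates to $O(1 + \ln(\delta_i/r))$, and a far zone $\{|x-\alpha_i|,\,|x-\alpha_i'| \ge 2\delta_i\}$, where the FTC argument from step (i) gives $|g_i| \lesssim \delta_i\, |x-\alpha_i|^{-2}$, whose $L^2$-contribution is $O(1)$. (When $\delta_i < r$ the near zone is empty and the far-zone bound alone yields $\|g_i\|_{L^2}^2 \lesssim \delta_i^2/r^2 = O(1)$.) Altogether $\|g_i\|_{L^2}^2 \lesssim 1 + \ln(\delta_i/r)$ (interpreted as its positive part when $\delta_i < r$), and the triangle inequality then delivers \eqref{jstarL2bound}.

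For \eqref{jstarLpbound} I would argue by scaling. The change of variables $x = \alpha_i + \delta_i y$ gives $\|g_i\|_{L^p(\R^2)} = \delta_i^{2/p - 1}\, \|G_i\|_{L^p(\R^2)}$, where $G_i(y) = y^\perp/|y|^2 - (y-e_i)^\perp/|y-e_i|^2$ is the normalized dipole with unit separation $e_i := (\alpha_i' - \alpha_i)/\delta_i$. Since $G_i$ has two isolated $|y|^{-1}$-singularities (in $L^p_{\mathrm{loc}}$ for $p<2$) and decays like $|y|^{-2}$ at infinity (in $L^p$ for $p>1$), $\|G_i\|_{L^p(\R^2)} \le C(p)$. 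Summing in $i$ and using the concavity of $t \mapsto t^{2/p-1}$ (since $2/p-1 \in (0,1)$), one obtains $\sum_i \delta_i^{2/p-1} \le n\, (\sum_i \delta_i)^{2/p-1}$, completing the proof.
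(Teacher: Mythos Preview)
Your proof is correct and follows essentially the same strategy as the paper: reduce to single-pair estimates for $g_i$, then use a near/far decomposition to control the integrals. The only notable difference is in \eqref{jstarLipschitz}: the paper exploits the exact algebraic identity
\[
\left|\frac{y^\perp}{|y|^2} - \frac{z^\perp}{|z|^2}\right| = \frac{|y-z|}{|y|\,|z|}
\]
(which follows from writing $y^\perp/|y|^2 = i/\bar y$ in complex notation), so that on $\R^2_r(\alpha)\cap\R^2_{r'}(\alpha')$ one gets $|g_i(x)| \le |\alpha_i-\alpha_i'|/(r r') \le |\alpha_i-\alpha_i'|/\min(r,r')^2$ with constant exactly $1$ and no case analysis. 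Your FTC argument is perfectly fine but produces an implicit constant and requires the small/large $|\alpha_i-\alpha_i'|$ split. For \eqref{jstarLpbound} your scaling argument is in fact a bit cleaner than the paper's direct split into $B_{2\Delta_j}(\bar\alpha_j)$ and its complement, though the content is the same. For \eqref{jstarL2bound} the paper simply says ``a similar argument'' and your near/far split is exactly what is intended.
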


\begin{proof}
Bound \eqref{jstarLipschitz} follows from  
\begin{equation} \label{jalphajalphaprimediff}
\begin{split}
\LV j(u_*(\alpha, d)(x)) - j(u_*(\alpha',d)(x)) \RV 
&  \leq \sum_{j=1}^n |d_j| \LV {(x - \alpha_j)^\perp \over |x - \alpha_j|^2} -  {(x - \alpha'_j)^\perp \over |x - \alpha'_j|^2} \RV  \\
&  = \sum_{j=1}^n  { | \alpha_j - \alpha'_j | \over | x - \alpha_j| |x - \alpha'_j| }.
\end{split}
\end{equation}
To prove \eqref{jstarLpbound} let $\Delta_j = |\alpha_j - \alpha'_j|$ and $\overline{\alpha}_j = {\alpha_j + \alpha_j' \over2}$, and let
$B_j := B_{2\Delta_j}(\bar \alpha_j)$. Then
\begin{align*}
\LN j(u_*(\alpha,d)) - j(u_*(\alpha',d)) \RN_{L^p(\R^2)}
& \leq 
 \sum_{j=1}^n \| K(\cdot -\alpha_j) - K(\cdot - \alpha'_j)\|_{L^p(\R^2)}
 \\
&
\le
\sum_{j=1}^n \| K(\cdot -\alpha_j)\|_{L^p(B_j)} \ + \  \|K(\cdot - \alpha'_j)\|_{L^p(B_j)} \\
&\quad +
\sum_{j=1}^n
 \| K(\cdot -\alpha_j) - K(\cdot - \alpha_j')\|_{L^p(\R^2 \setminus B_j)}.
\end{align*}
Since $B_j \subset B_{3\Delta_j}(\alpha_j)$, a direct computation
shows that
\[
 \| K(\cdot -\alpha_j)\|_{L^p(B_j)} \le C_p \ \Delta_j^{\frac 2p-1},
\]
and similarly for $ \| K(\cdot -\alpha_j')\|_{L^p(B_j)}$. Next, from
\eqref{jalphajalphaprimediff} one can see that
\[
| K(x-\alpha_j) -  K(x-\alpha_j'| \ \le  \frac{ \Delta_j }{|x-\bar \alpha_j|^2} \qquad\mbox
{ if }|x-\bar \alpha_j| \le 2
\Delta_j,
\]
and then after a short calculation, one finds that
\[
 \| K(\cdot -\alpha_j) - K(\cdot - \alpha_j')\|_{L^p( \R^2 \setminus B_j)} \le C_p\ |\Delta_j|^{\frac 2p-1}.
\]
Combining these bounds, we obtain
\eqref{jstarLpbound}. A similar argument yields \eqref{jstarL2bound}.

\end{proof}

{Finally,  we record some properties,
mostly already proved in \cite{BJS}, of the energy $\E_\e(u)$ renormalized at infinity 
\[
\E_\e(u) =
\lim_{R\to \infty}\int_{B(R)} [e_\e(u) - \frac 12 |\nabla U^D|^2]\  dx,
\qquad \quad\mbox{$U^D$ fixed in \eqref{UD.def}, \eqref{Psin0}.}
\]
}
\begin{lemma}\label{lem.Edef}
If $u\in [U^D] + H^1$, then
$\E_\e(u)$
is well-defined and finite. Moreover, if $(\chi_R)_{R\ge 1}$
is any family of smooth, compactly supported functions 
satisfying \eqref{chiRdef1}, \eqref{chiRdef2}
then 
\begin{equation}
\E_\e(u) \ = \  
\lim_{R\to \infty} \int_{\R^2} \chi_R [e_\e(u) \, - \frac 12 |\nabla U^D|^2] \ dx .
\label{E.redef}\end{equation}
\end{lemma}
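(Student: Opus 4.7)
The plan is to decompose $u = U + w$ with $U = \tau_y U^D \in [U^D]$ and $w \in H^1(\R^2;\C)$, and then expand
\[
e_\e(u) - \tfrac12 |\nabla U^D|^2 \ = \ \underbrace{\tfrac12(|\nabla U|^2 - |\nabla U^D|^2)}_{(I)} \ + \ \underbrace{(\nabla U, \nabla w)}_{(II)} \ + \ \underbrace{\tfrac12|\nabla w|^2}_{(III)} \ + \ \underbrace{\tfrac1{4\e^2}(|u|^2-1)^2}_{(IV)}.
\]
I would first check that terms $(I)$, $(III)$ and $(IV)$ are all integrable on $\R^2$. For $(I)$, outside a common compact set both $U$ and $U^D$ equal unit-modulus functions of the form $e^{iD\theta}$ (composed with a translation for $U$), so $|\nabla U|^2 - |\nabla U^D|^2 = D^2(|x-y|^{-2} - |x|^{-2}) = O(|x|^{-3})$, which is integrable at infinity. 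Term $(III)$ is in $L^1$ by assumption, and for $(IV)$ we use that, outside the vortex cores, $|u|^2 - 1 = 2(U,w) + |w|^2$, which lies in $L^2$ since $U\in L^\infty$ at infinity, $w\in L^2$, and $w \in L^4$ by 2D Sobolev embedding, while inside a compact set $(|u|^2-1)^2$ is locally bounded.

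The main obstacle is the cross term $(II)$, which is \emph{not} in $L^1(\R^2)$ because $\nabla U$ only lies in $L^p$ for $p > 2$. Here I would integrate against the cutoff $\chi_R$ and integrate by parts:
\[
\int_{\R^2} \chi_R\,(\nabla U, \nabla w)\,dx \ = \ -\int_{\R^2} (\nabla \chi_R \cdot \nabla U,\, w)\,dx \ - \ \int_{\R^2} \chi_R\,(\Delta U, w)\,dx.
\]
A direct calculation of $\Delta e^{iD\theta}$ in polar coordinates shows $|\Delta U(x)| \lesssim |x|^{-2}$ at infinity, so $\Delta U \in L^2(\R^2)$; combined with $w \in L^2$, this makes $(\Delta U, w) \in L^1(\R^2)$, so dominated convergence gives convergence of the second term. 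For the first term, $|\nabla\chi_R \cdot \nabla U| \lesssim R^{-2}$ on an annulus of area $\lesssim R^2$, giving an $L^2$ norm $\lesssim R^{-1}$, so this term tends to $0$. This proves the $\chi_R$-limit exists and is finite.

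Finally, to identify this $\chi_R$-limit with $\E_\e(u)$ as defined by \eqref{EE.def} I would show that
\[
\int_{B_{2R}\setminus B_R}\left[e_\e(u) - \tfrac12|\nabla U^D|^2\right] dx \ \longrightarrow \ 0 \quad \text{as } R\to\infty,
\]
since then $\big|\int_{B_R}[\cdots]\,dx - \int \chi_R [\cdots]\,dx\big|$ is bounded by the above annular integral (with $\chi_R$ replaced by $\chi_R - \mathbf 1_{B_R}$, whose absolute value is $\le 1$). For $(I)$, $(III)$, $(IV)$ this vanishing is immediate by absolute continuity of $L^1$ integrals, and for $(II)$ one uses Cauchy--Schwarz together with $\|\nabla U\|_{L^2(B_{2R}\setminus B_R)} \lesssim 1$ (logarithmic growth cancels on the annulus) and $\|\nabla w\|_{L^2(B_{2R}\setminus B_R)} \to 0$. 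This shows that $\E_\e(u)$ equals the $\chi_R$-limit, which in particular depends on neither $R$ nor the choice of cutoff family.
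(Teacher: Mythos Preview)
Your proof is correct and follows essentially the same route as the paper: decompose $u = \tau_y U^D + w$, observe that the potential term and $|\nabla w|^2$ are already in $L^1$, and handle the cross term $(\nabla U, \nabla w)$ by integration by parts, showing that $(\Delta U, w)\in L^1$ while the cutoff/boundary contribution vanishes. The paper writes this as the pointwise identity $e_\e(u) - \tfrac12|\nabla \tilde U^D|^2 = \tfrac12|\nabla w|^2 - (\Delta \tilde U^D, w) + \tfrac1{4\e^2}(|u|^2-1)^2 + \nabla\cdot((\nabla \tilde U^D, w))$ and cites \cite{BJS} for the vanishing of the boundary term $\int_{\partial B_R}(\partial_\nu \tilde U^D, w)\,d\mathcal H^1$; your Cauchy--Schwarz argument on the annulus is a self-contained substitute for that citation, and you are slightly more careful in explicitly treating the translation term $(I)$.

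One small slip: you write that ``inside a compact set $(|u|^2-1)^2$ is locally bounded,'' but $w\in H^1$ need not be bounded. The fix is immediate: the same Sobolev argument you use at infinity works locally, giving $|u|^2-1\in L^2_{loc}$ and hence $(|u|^2-1)^2\in L^1_{loc}$.
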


\begin{proof}
Following \cite{BJS}, we  write $u = \tilde U^D +v$ with $v\in H^1$, where $\tilde U^D = \tau_y U^d$
for some $y\in \R^2$. Then
\[
e_\e(u) - \frac 12 |\nabla \tilde U^D|^2  =
\frac 12| \nabla v|^2 - \Delta \tilde U^D \cdot v + \frac 1{4\e^2}(|u|^2-1)^2
+ \nabla \cdot ( \nabla \tilde U^D \cdot v).
\]
The hypotheses imply that the first three terms on the right-hand side are integrable over
$\R^2$, so in view of the dominated convergence theorem
it suffices  (after integrating by parts) to check that 
\[
0 =
 \lim_{R\to \infty}\int_{\partial B_R} \p_\nu \tilde  U^D \cdot  v \ d\calH^1
 = 
 \lim_{R\to \infty}\int_{\R^2} \nabla\chi_R  \cdot ( \nabla \tilde U^D \cdot v) \ dx.  
\]
The first of these is established in \cite{BJS} Lemma 3.3, and the second 
follows easily from properties of $\chi_R$.
\end{proof}

We remark that the Lemma implies that if $u, \widetilde u\in [U^D]+H^1(\R^2)$, then
\begin{equation}
\lim_{R\to \infty} \int_{B(R)} [e_\e(\widetilde u) - e_\e(u)] \ dx
= 
\lim_{R\to \infty} \int \chi_R [e_\e(\widetilde u) - e_\e(u)]  \ dx
= \E_\e(\widetilde u) - \E_\e(u).
\label{limits.exist}\end{equation}

\section{Energy bounds on annuli  }
\label{sec.annuli}

Recall that $R_\alpha = 4 \vee 4 \max _j |\alpha_j|$.
In this section we will prove the following result.

{\begin{proposition} \label{Propannularlower}
Assume that $u\in [U^D] + H^1(\R^2)$ and that 
\begin{equation}
\| J(u) - \pi \sum  d_i\delta_{\alpha_i} \|_{X^*_{\ln}}  \le s_\e, 
\qquad\quad\sum d_i = D>0.
\label{annlbd.h1}\end{equation}
There
exists a universal constant $c_1$ such that if 
 $s_\e D^2 \le  {c_1}$ and 
$\e D^2 <  {c_1}$,
then
for any $R_1\ge R_\alpha$ and $R_2\ge 2R_1$, we have
\begin{equation}
\int_{B_{R_2}\setminus B_{R_1}} e_\e(u) \ dx \ - \ \pi D^2 \log(\frac {R_2}{R_1})\  \ge
  -  CD^4 \frac{\e}{R_1}.
\label{annulus.lbd}\end{equation}
Consequently, 
for every $R \geq R_\alpha$
\begin{equation} \label{energyupperboundBigBall}
\int_{ B_R}   e_\e (u) \ dx \leq \E_\e(u) +  \pi D^2 \ln R  +   CD^4 {\e \over R}.
\end{equation}
\end{proposition}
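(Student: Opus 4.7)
\emph{Strategy.} The plan is to slice the annulus $B_{R_2}\setminus B_{R_1}$ by concentric circles and, on almost every circle $\partial B_r$ with $R_1 \leq r \leq R_2$, establish a sharp one--dimensional lower bound of the form $\int_{\partial B_r} e_\e(u)\,d\calH^1 \geq \pi D^2/r - (\text{small})$, and then integrate in $r$. The upper bound \eqref{energyupperboundBigBall} will be a direct corollary of \eqref{annulus.lbd} obtained by sending $R_2\to\infty$ and invoking Lemma \ref{lem.Edef} together with \eqref{Psin0}.

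\emph{Step 1 (degree on circles).} Since $R_1 \geq R_\alpha$, every singularity $\alpha_i$ lies strictly inside $B_{R_1}$, so for each $r \geq R_1$ the measure $\pi\sum d_i\delta_{\alpha_i}$ assigns mass $\pi D$ to $B_r$. I test the hypothesis \eqref{annlbd.h1} against a family of smooth radial cutoffs $\chi_{r,\delta}$ supported in $B_r$, equal to $1$ on $B_{r-\delta}$, with $\|D\chi_{r,\delta}\|_\infty \lesssim 1/\delta$, hence $\|\chi_{r,\delta}\|_{X_{\ln}} \lesssim (1+\ln r)/\delta$. This yields
\[
\Bigl|\int \chi_{r,\delta}\, J(u)\,dx - \pi D\Bigr| \lesssim s_\e(1+\ln r)/\delta,
\]
and by Fubini together with the pointwise bound $|J(u)| \leq e_\e(u)$, the remainder $\int_{B_r\setminus B_{r-\delta}} J(u)\,dx$ is small on a set of $r$ of nearly full measure. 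Combined with Stokes' identity $\int_{B_r} J(u) = \tfrac12 \int_{\partial B_r} j(u)\cdot\tau\,d\calH^1$ and the fact that the circulation equals $\pi\deg(u/|u|,\partial B_r)$ up to an error controlled by $\||u|^2 - 1\|_{L^2(\partial B_r)}$, the smallness assumption $s_\e D^2 \leq c_1$ is enough to force the integer-valued $\deg(u/|u|,\partial B_r)$ to equal $D$ on a.e.\ admissible $r$.

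\emph{Step 2 (sharp 1D bound and integration).} On a good circle, polar decomposition $u = \rho e^{i\varphi}$ is available because $|u| > 0$ there, which follows for small $\e D^2$ from 1D Sobolev embedding on $\partial B_r$ combined with finiteness of $\int_{\partial B_r}\tfrac{(|u|^2-1)^2}{4\e^2}\,d\calH^1$. Then Cauchy--Schwarz
\[
(2\pi D)^2 \ \leq\ \int_{\partial B_r}\rho^2|\partial_\tau\varphi|^2\,d\calH^1 \cdot \int_{\partial B_r}\rho^{-2}\,d\calH^1,
\]
the linearization $|\rho^{-2}-1| \leq 4|\rho^2 - 1|$ (valid where $\rho \geq 1/2$), and the estimate $\|\rho^2-1\|_{L^2(\partial B_r)}^2 \leq 4\e^2 \int_{\partial B_r} e_\e(u)\,d\calH^1$ together give the standard bound
\[
\int_{\partial B_r} e_\e(u)\,d\calH^1 \ \geq\ \frac{\pi D^2}{r} \ -\ C\,D^4\,\frac{\e}{r^2},
\]
after absorbing factors via Young's inequality. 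Integrating over $r \in (R_1, R_2)$ using $\int_{R_1}^{\infty} r^{-2}\,dr = 1/R_1$ produces \eqref{annulus.lbd}. For \eqref{energyupperboundBigBall} I apply \eqref{annulus.lbd} with $R_1 = R$ and let $R_2 \to \infty$: since $R \geq R_\alpha \geq 4$, \eqref{Psin0} gives $\int_{B_{R_2}\setminus B_R}\tfrac12 |\nabla U^D|^2\,dx = \pi D^2 \ln(R_2/R)$, and Lemma \ref{lem.Edef} recovers $\E_\e(u)$ in the limit, yielding the asserted upper bound.

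\emph{Main obstacle.} The delicate point is the degree identification in Step 1. By Example \ref{ex.discont} the $X^*_{\ln}$ norm is strictly weaker than $\dot W^{-1,1}$, so testing against a sharp characteristic $\chi_{B_r}$ is forbidden, and the smooth approximation $\chi_{r,\delta}$ pays a factor $(1+\ln r)/\delta$ in the $X_{\ln}$ norm. The logarithmic weight built into $X_{\ln}$ is exactly what makes this viable: it permits $\delta$ to be taken small enough (roughly $\delta \sim s_\e D^2 (1+\ln r)$) to push the degree-approximation error below the integer gap $\pi$, while keeping the thin-annulus remainder controllable via Fubini. The twin smallness hypotheses $s_\e D^2 \leq c_1$ and $\e D^2 \leq c_1$ both enter at this balance point and in guaranteeing $|u| > 0$ on the test circles, which is prerequisite to defining the degree at all.
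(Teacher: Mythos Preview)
Your overall strategy (slice by circles, fix the degree, use a sharp one--dimensional lower bound, integrate) matches the paper's, and your Step~2 bound is essentially the Jerrard lower bound \eqref{good.r.est} used there. The genuine gap is in Step~1: the assertion that $\deg(u/|u|,\partial B_r)=D$ on a.e.\ admissible $r$ is neither proved nor true. Nothing in the hypotheses prevents $u$ from carrying, say, a tight dipole pair somewhere in $B_{R_2}\setminus B_{R_1}$; this costs almost nothing in the $X_{\ln}^*$ norm but changes the degree on a set of $r$ of positive measure, and there can also be radii on which $m(r):=\inf_{\partial B_r}|u|<\tfrac12$ so that the degree is undefined. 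Your own Fubini remark shows only that the thin--annulus remainder is small on a set of $r$ of \emph{large} (not full) measure, and even that statement presupposes an a~priori bound on $\int_{B_{R_2}\setminus B_{R_1}}e_\e(u)$ which you have not obtained.

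What the paper does---and what is missing from your sketch---is to quantify and absorb the bad set. It splits $[R_1,R_2]$ into a good set $G=\{r:m(r)\ge\tfrac12,\ d(r)\ge D\}$ and its complement $B$, uses the bound $\int_{\partial B_r}e_\e\ge \pi D^2/(r+c_0\e D^2)$ on $G$, and then shows that every connected component $I=(a,b)$ of $B$ satisfies $\int_I\int_{\partial B_r}e_\e\,d\calH^1\,dr\ge\int_I \pi D^2/(r+c_0\e D^2)\,dr$. This is the heart of the argument: one bounds the measure of $I_1:=\{r\in I:m(r)\ge\tfrac12,\ d(r)<D\}$ by testing \eqref{annlbd.h1} against a Lipschitz radial function with $\nabla\phi=-x/|x|$ exactly on $\{|x|\in I_1\}$ (this is where the $\ln$--weight in $X_{\ln}$ enters), uses $\int_{\partial B_r}e_\e\ge C/\e$ on $I_2:=\{r:m(r)<\tfrac12\}$, and then runs a dichotomy on whether $|I|/b$ is small (a topological argument on $\tilde u=\min(1,\tfrac1{2|u|})u$ gives $\int_{B_b\setminus B_a}e_\e\ge\pi/4$) or large (so that $|I|/\e$ dominates the needed $\pi D^2\ln(b/a)$). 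Without this component--by--component analysis, the lower bound \eqref{annulus.lbd} does not follow.
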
}

{Note that the hypotheses allow $D$ to grow asymptotically large as $\e \to 0$, as will be necessary when we consider the hydrodynamic limit.}

\begin{proof}
{We may assume that $|u|\le1$, as otherwise we can replace $u$ by
$\min(1, |u|^{-1})u$. By a routine regularization argument, we may also assume that $u$ 
is smooth. We will focus on proving \eqref{annulus.lbd}, since  \eqref{energyupperboundBigBall}
is a direct consequence.}

1. For $r\in [R_1,R_2]$ let $m(r) = \inf_{\partial B_r}|u|$, and if $m(r)\ge \frac 12$ then
let $d(r) := \deg(u; \partial B_r)$. 
We define
\[
G := \{r\in [R_1, R_2] : m(r) \ge \frac 12, d(r) \ge D\},
\qquad
B :=  [R_1, R_2]\setminus G.
\]
If $\frac 12$ is a regular value of $|u|$, which we will assume to be the case, then $B$ consists of a finite union of relatively open intervals. (Otherwise we can replace $\frac 12$ by some nearby regular value of $|u|$
in the definition of $G$.)

It follows from the proof of Theorem 2.1 in   \cite{JerrardSIMA} that
\begin{equation}\label{good.r.est}
\int_{\partial B_r} e_\e(u) d\calH^1 \ge  \pi\frac  {m^2(r) d^2(r)}r + \frac 1 {C\e} (1 -m(r))^2 
\ge 
\frac{\pi d^2(r)}{r +  c_0\e d^2(r)} 
\quad
\quad
\mbox{if $m(r)\ge \frac 12$ }
\end{equation}
for a {universal constant $c_0$. The same proof also shows that}
\begin{equation}
\int_{\partial B_r} e_\e(u) d\calH^1 \ge \frac C \e \qquad
\quad
\mbox{if $m(r) <  \frac 12$ }.
\label{bad.r.est}\end{equation}
Thus
\begin{align*}
\int_{B_{R_2}\setminus B_{R_1}} e_\e(u) \ dx
&
\ge
\int_{R_1}^{R_2} \left( \int_{\partial B_r} e_\e(u) d\calH^1 \right) dr
\\
&
\ge
\int_{R_1}^{R_2}  \pi\frac{ D^2}{r+{c_0D^2} \e} \  dr
+ 
\int_B \left(
\int_{\partial B_r} e_\e(u)  d\calH^1 \ - \   \pi\frac{ D^2}{r+{c_0D^2}\e}
\right)dr 
\\
&
\ge
\pi D^2 (\ln(\frac{R_2}{R_1}) - {c_0D^2} \frac \e{R_1}) 
+
\int_B \left(
\int_{\partial B_r} e_\e(u)  d\calH^1 \ - \   \pi\frac{ D^2}{r+{c_0D^2}\e}
\right)dr 
\end{align*}
for ${c_0D^2} = c_0 D^2$. 
Thus we only need to show that the last term on the right-hand side is 
always positive, and to do this 
it suffices to demonstrate that {there exists some constant $c_1$ 
such that every component $I$ of $B$
satisfies
\begin{equation}\label{ann.bd.component}
\int_I  \pi\frac{ D^2}{r+{c_0D^2}\e} \ dr \ \le \ \int_I \int_{\partial B_r} e_\e(u) \, d\calH^1 \ dr
\qquad\quad
\mbox{ if }\e, s_\e \le \frac {c_1}{D^2}. 
\end{equation}}

2. Fix a component $I$  of $B$, 
and let $a<b$ denote its endpoints.
We claim that
\begin{equation}\label{I.est}
|I| \le    s_\e(1+\ln^+b)+ C \e \int_{B_b\setminus B_a}e_\e(u). 
\end{equation}
To see this,
define
\[
I_1 := \{ r\in I : m(r) \ge \frac 12, d(r) < D\},
\qquad
\qquad
I_2 := \{ r\in I : m(r) < \frac 12\}.
\]
{Note that $I = I_1\cup I_2$.}
It is clear from \eqref{bad.r.est} that 
\begin{equation}\label{I2.est}
|I_2| \le C \e \int_{B_b\setminus B_a}e_\e(u) \ dx .
\end{equation}
To estimate $|I_1|$, we define a test function
$\phi  \in X_{\ln}$ by specifying that $\phi$ is Lipschitz
with support in $B_b$, and that
\[
\nabla \phi(x) = \left\{ \begin{array}{ll} 
-{x \over |x|}    & \hbox{ for {\em a.e. }$x$ such that  } |x|   \in I_1  \\
0 & \hbox{ for {\em a.e. }$x$ such that } |x|  \not\in I_1. \end{array} \right.
\]
The definitions imply that $\| \phi \|_{X_{\ln}} \le 1+\ln^+b$,
so we deduce from \eqref{annlbd.h1}  that
\[
 \left | \int_{\R^2} \phi \left(  J(u) - \pi \sum d_j \delta_{\alpha_j} \right) dx  \right| 
\le 
\| \phi \|_{X_{\ln}} \ \|  J(u) - \pi \sum d_j \delta_{\alpha_j} \|_{X_{\ln}^*}
 \le  s_\e(1+\ln^+b).
\]
The definition of $\phi$ also implies that $\phi = |I_1|$ on $B_a$,
so 
\begin{align*}
s_\e (1+\ln^+b) \ \ge \ \int_{\R^2} \phi ( \pi \sum d_j \delta_{a_j}- J(u) ) dx  
& =\pi D |I_1|  + \frac 12\int_{\R^2} \nabla \phi \times j(u) dx  \\
&=
\int_{I_1}\left(\pi D +  \frac 12\int_{\partial B_r} \nabla \phi \times j(u) d\calH^1  \right) dr.
\end{align*}
For any $s \in I_1$, we can write $u|_{\partial B_s}$ in the form $u = \rho e^{i \varphi}$,
with a  well-defined phase $\varphi$.  Then $j(u) = \nabla \varphi +  {j(u) \over |u|} { |u|^2 - 1 \over |u|}$,
and since $|u|\ge \frac 12$ and $\|\nabla \phi\|_{L^\infty} \le 1$, straightforward estimates lead to
\[ 
\left | \int_{\partial B_r} \nabla \phi \times j(u) d\calH^1 - \int_{\partial B_r} \nabla \phi \times \nabla \varphi d\calH^1 \right |
\ \leq \  2\e \int_{\partial B_r} e_{\e}(u) d\calH^1 .
\] 
And the definition of $\phi$ implies that 
$-\nabla\phi \times \nabla\varphi = \tau \cdot\nabla \varphi$ on $\partial B_r$ for a.e. $r\in I_1$,
where $\tau(x) = \frac 1{|x|} (-x_2,x_1)$ is the unit tangent to $\partial B_r$ with the standard
(counterclockwise) orientation. It follows that
\[
- \int_{\partial B_r} \nabla \phi \times \nabla \varphi d\calH^1 \ = 2\pi\, d(r)
\]
for a.e. $r\in I_1$. Since 
$d(r)\le D-1$ for every $r\in I_1$, we combine the previous few inequalities
to find that
\[
s_\e (1+\ln^+b)  \ \ge \   \pi |I_1| -  \e \int_{I_1} \int_{\partial B_r} e_\e(u) d\calH^1 \ dr.
\]
Upon rearranging and combining this with \eqref{I2.est}, we 
obtain the claim \eqref{I.est}.

3. We now prove \eqref{ann.bd.component}.

First, note that if $I_1$ is empty, then $I = I_2$ and it follows from \eqref{I2.est}
(recalling that $R_1\ge 1$ by hypothesis) that 
\begin{equation}
\int_I \int_{\partial B_2}e_\e(u) \,d\calH^1 \ dr \ge \int_I \frac C \e  \ dr
\ge \int_I \frac{\pi D^2}{r+{c_0D^2}\e}dr
\qquad\mbox{ if } D^2 \e < \frac C{\pi }  
\label{I2empty}\end{equation}
and as a result that \eqref{ann.bd.component} holds {for a suitable choice of $c_1$.}

We henceforth assume that $I_1$ is nonempty, and we consider two cases.

{Case 1}: $|I| < \delta b$ for some $\delta \in (0,\frac 14)$ to be chosen.

If this holds, then since $R_2 \ge 2R_1$, the interval $I$ must be a proper subset of
$[R_1, R_2]$ and it follows that at least one endpoint must belong to $G$.
For concreteness we assume that $b\in G$; the other case is essentially identical. Fix some $r\in I_1$, and
let 
\[
\tilde u(x) := \begin{cases}u &\mbox{ if }|u|\le \frac 12\\
\frac u{2|u|} &\mbox{ if }|u|\ge \frac 12.
\end{cases}
\]
Since $b\in G$ and $r\in I_1$, the definition of $\tilde u$ implies that 
we can write
\[
\tilde u|_{\partial B_b} = \frac 12 e^{i \varphi_b}, \qquad
\tilde u|_{\partial B_r} = \frac 12 e^{i \varphi_r}, \qquad
\]
with $\varphi_b$ and $\varphi_r$ changing by at least  $2\pi D$ and at most $2\pi (D-1)$
respectively on the circles on which they are defined. 
Then, since $e_\e(u) \ge  \frac 12 |\nabla  \tilde u|^2  \ge J(\tilde u) = \frac 12 \nabla \times j(\tilde u)$ pointwise, and 
\begin{align*}
\int_{B_b\setminus B_a} e_\e(u) \, dx
\ \ge \ 
\int_{B_b\setminus B_r} J (\tilde u )\, dx
\ &= \
 \frac 12\int_{\partial B_b} j(\tilde u) \cdot \tau d\calH^1 - 
\frac 12\int_{\partial B_r} j(\tilde u) \cdot \tau d\calH^1 \\
&= \
\frac 18 \left( 
\int_{\partial B_b} \nabla \varphi_b\cdot \tau  d\calH^1
-
\int_{\partial B_r} \nabla \varphi_r\cdot \tau  d\calH^1\right)\\
& \
 \ge  \frac \pi 4 .
\end{align*}
On the other hand, $b-a = |I| \le \delta b$ by assumption, so
\[ 
\int_I \frac{\pi D^2}{r+{c_0D^2}\e} dr 
\ \le \ 
\pi D^2 \int_{(1-\delta) b}^b  \frac {dr}r \le \pi D^2 \ln ( \frac 1{1-\delta}).
\]
We now fix $\delta$ small enough that $\pi D^2 \ln(\frac 1{1-\delta}) \le \frac \pi 4$.
For example, we may take $\delta = \frac 1{CD^2}$ for some universal $C$.
Then  \eqref{ann.bd.component} follows by combining the above two inequalities.

{{Case 2}. $|I| \ge \delta b$ for $\delta=  \frac 1{CD^2}$ as fixed above.
Then $s_\e (1+\ln^+b) \le \frac 12 
\delta b$ for all $b \ge 1$ so long as $s_\e D^2 \le \frac 12 \delta = \frac 1{CD^2}$, and then it follows from \eqref{I.est} that
\[
\int_{B_b\setminus B_a} e_\e(u) \ dx\ge \frac 1{C\e}(|I|- s_\e(1+\ln^+b)) \ge \frac
1{2C\e}|I|  = \int_I \frac 1{2C\e} \ dr.
\]
Then \eqref{ann.bd.component} follows, after adjusting $c_1$ if necessary,
by  the same estimate as  in the case when $I_1$ was empty,
see \eqref{I2empty} above.}
\end{proof}


\section {Quantitative rate of second order $\Gamma$-convegence}\label{s:gstab}


Our $\Gamma$-stability result for infinite-energy configurations on $\R^2$ is

\begin{proposition}  \label{Propgammastability}
There
{exists a constant $c_2\in (0,1)$  such that}
for any $u\in [U^D] + H^1(\R^2)$ with $D\neq 0$, if 
\begin{equation}  
\| J(u) - \sum_{j=1}^n \pi d_j \delta_{\alpha_j} \|_{X^*_{\ln}} \leq s_\e \quad \hbox{ for some }
{s_\e \in [\e {\sqrt{\ln(\rho_\alpha/\e)} \over \ln R_\alpha} , 
\frac {c_2 \rho_\alpha} { n^3 \ln R_\alpha  }  ], }   
\label{gstab.h1}\end{equation}  
for some $\alpha = \left( \alpha_1, \ldots, \alpha_n \right) \in \R^{2n*}$ 
and $d\in \{ \pm 1 \}^n$,
and if 
\begin{equation} \label{sigmastardefinition}
{8  s_\e \ln R_\alpha  }\le \sigma_\star 
:= \sqrt{ \frac{\rho_\alpha}{n^{3}}(s_\e \ln R_\alpha + \e( \E_\e(u) +  \pi D^2 \ln R_\alpha))}
 \le  \frac{\rho_\alpha}{40 n } , 
 \end{equation}
then we have  {(using notation introduced in \eqref{cutout})},
\begin{equation}
\begin{split}
 \frac12 \int_{\R^2_{\sigma_\star} (\alpha)} e_\e(|u|)  + 
\frac 14\left| { j(u) \over |u|} - j(u_*) \right|^2
dx  \le \ 
\Sigma(u;\alpha, d) +
{C {n^4 \over \rho_\alpha} \sigma_\star} \\
 \le 
 \Sigma(u;\alpha, d)+\sqrt {s_\e}\, \calP
\end{split}
\label{gstab.c1}
\end{equation}
for a constant $C$, where
$\Sigma(u;\alpha,d) = \E_\e(u) - W_\e(\alpha,d)$ and $u_* = u_*( \cdot; \alpha, d)$ and $\calP$ are defined in \eqref{canonicalharmonicmap} and
\eqref{P.def}, respectively. 
Finally, we  have the estimates 
\begin{equation}
\label{moduL4L2}
\| 1-|u| \|_{L^4(\R^2)}^4  \ \le \  \| 1-|u|^2 \|_{L^2(\R^2)}^2 \ \le \   {\e^2\calP,}
\end{equation}
\begin{equation}\label{Lpcurrentboundsbigball}
 \LN j(u) - j(u_*) \RN_{L^{4/3}+L^2(\R^2)} 
\leq 
\ \sqrt{|\Sigma(u;\alpha,d)|} \ + \    s_\e^{1/4}\calP.
\end{equation}
\end{proposition}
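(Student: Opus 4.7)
The strategy is to establish a matching energy lower bound of the form
\[
\lim_{R\to\infty}\left[\int_{B_R} e_\e(u)\,dx - \pi D^2 \ln R\right] \ \ge\ W_\e(\alpha,d) + \frac{1}{2}\int_{\R^2_{\sigma_\star}(\alpha)}\!\!\!\left[e_\e(|u|) + \tfrac 14\bigl|j(u)/|u| - j(u_*)\bigr|^2\right]dx - C\tfrac{n^4}{\rho_\alpha}\sigma_\star
\]
and rearrange it using $\E_\e(u) - W_\e(\alpha,d) = \Sigma(u;\alpha,d)$ to obtain \eqref{gstab.c1}. The second inequality in \eqref{gstab.c1} will then follow from the definition of $\sigma_\star$ in \eqref{sigmastardefinition} and the convention \eqref{Psmall} for absorbing polynomial factors into $\calP$.

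To produce this lower bound, I would partition $B_R$ (for large $R\ge R_\alpha$, eventually $R\to\infty$) into the small disks $B_{\sigma_\star}(\alpha_j)$ and the exterior $B_{R,\sigma_\star}(\alpha)$. Inside each small disk, the $X^*_{\ln}$ hypothesis pins the true vortex of $u$ near $\alpha_j$ with the correct degree $d_j$, and a standard Jerrard-type lower bound (Theorem 2.1 of \cite{JerrardSIMA}, in the spirit of \cite{BBH}) gives $\int_{B_{\sigma_\star}(\alpha_j)} e_\e(u)\,dx \ge \pi\ln(\sigma_\star/\e)+\gamma -\text{(controlled error)}$, accounting for the $n(\pi|\ln\e|+\gamma)+n\pi\ln(1/\sigma_\star)$ portion of $W_\e+n\pi\ln(1/\sigma_\star)$. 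On the exterior region, where $|u|>0$, I would use the identity $|\nabla u|^2 = |\nabla|u||^2+|j(u)|^2/|u|^2$ together with the orthogonal decomposition
\[
\int \frac{|j(u)|^2}{2|u|^2}\,dx = \int \frac{|j(u_*)|^2}{2}\,dx + \int j(u_*)\cdot\bigl(j(u)/|u|-j(u_*)\bigr)dx + \int \frac{|j(u)/|u|-j(u_*)|^2}{2}\,dx,
\]
evaluating the first integral via \eqref{FiniteWapprox} (this produces $n\pi\ln(1/\sigma_\star)+W(\alpha,d)+\pi D^2 \ln R + O(n^3\sigma_\star^2/\rho_\alpha^2)$) and keeping the quadratic third term as the second piece of the LHS of \eqref{gstab.c1}.

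The cross term is the delicate step. Since $j(u_*) = \nabla^\perp \Psi$ with $\Psi(x) = \sum d_i \ln|x-\alpha_i|$ (harmonic on $\R^2_{\sigma_\star}$), I would integrate by parts to convert the cross term into boundary contributions on $\partial B_R$ and $\partial B_{\sigma_\star}(\alpha_j)$ plus a bulk term involving $\nabla\times(j(u)/|u|)$. Rewriting the latter as $2J(u)/|u| - (\nabla|u|\times j(u))/|u|^2$, its pairing with $\Psi$ is bounded via the $X^*_{\ln}$ hypothesis (after a suitable truncation of $\Psi$ near the $\alpha_j$ and at infinity, employing Lemma \ref{lem.dual}) together with modulus terms absorbed into the LHS via Young's inequality. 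Boundary contributions on $\partial B_R$ are controlled using Lemma \ref{lem.Edef} and Proposition \ref{Propannularlower}, which together cancel the $\pi D^2\ln R$ divergence and leave only lower order as $R\to\infty$.

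The auxiliary bounds follow from \eqref{gstab.c1}. Estimate \eqref{moduL4L2} is immediate from the fact that $\frac{1}{4\e^2}\int(|u|^2-1)^2\,dx$ on $\R^2_{\sigma_\star}$ is contained in the LHS, while the contribution from inside $B_{\sigma_\star}(\alpha_j)$ is of lower order by the Jerrard interior bound. For \eqref{Lpcurrentboundsbigball}, I would decompose
\[
j(u) - j(u_*) = |u|\bigl(j(u)/|u| - j(u_*)\bigr) + (|u|-1)\,j(u_*),
\]
controlling the first summand in $L^2(\R^2_{\sigma_\star})$ by $\sqrt{|\Sigma|+\sqrt{s_\e}\calP}$ from \eqref{gstab.c1}, and the second via H\"older using \eqref{moduL4L2} together with the $L^p$-integrability of $j(u_*)$ (see \eqref{jstarLpbound} and \eqref{Ggradientestimateerror}), which places it in $L^{4/3}$ near the vortices and in $L^2$ on the far field; the contribution of both vector fields on the excised disks $B_{\sigma_\star}(\alpha_j)$ is handled directly by integrability. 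The principal obstacle, I expect, is the rigorous treatment of the cross term: making the integration by parts airtight on the punctured unbounded domain $B_{R,\sigma_\star}(\alpha)$, and extracting the rate $\sqrt{s_\e}\calP$ while carefully balancing the logarithmic growth of $\Psi$ against the $X^*_{\ln}$ norm of the vorticity error in the $R\to\infty$ limit.
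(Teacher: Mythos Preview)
Your overall architecture is correct and matches the paper's: decompose $\E_\e(u)$ into vortex-ball contributions (handled by the single-vortex lower bound, Lemma~\ref{Singlevortexenergylemma}) and the exterior, expand the exterior energy via the identity $e_\e(|u|)+\tfrac12|j(u)/|u|-j(u_*)|^2 = [e_\e(u)-e_\e(u_*)] + j(u_*)\cdot(j(u_*)-j(u)/|u|)$, use \eqref{FiniteWapprox} for the $u_*$ energy, and reduce everything to the cross term. The auxiliary estimates \eqref{moduL4L2}, \eqref{Lpcurrentboundsbigball} are derived essentially as you outline.

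However, your treatment of the cross term has two concrete gaps. First, you propose integrating by parts directly on $B_{R,\sigma_\star}(\alpha)$ and pairing $\Psi$ against $\nabla\times(j(u)/|u|)$. This produces boundary terms on each $\partial B_{\sigma_\star}(\alpha_j)$ of the form $\int_{\partial B_{\sigma_\star}(\alpha_j)}\Psi\,\tau\cdot(j(u)/|u|-j(u_*))\,d\calH^1$, on which you have no a~priori control of $u$; and the bulk term involves $J(u)/|u|$ rather than $J(u)$, which is not what the $X^*_{\ln}$ hypothesis bounds. The paper avoids both issues by (i) first splitting off the modulus correction $\int j(u_*)\cdot\tfrac{j(u)}{|u|}(|u|-1)$ as a separate term (their $A_2$), so the remaining cross term involves $j(u_*)-j(u)$ and its curl is exactly $2(\sum\pi d_j\delta_{\alpha_j}-J(u))$; and (ii) replacing the circles $\partial B_{\sigma_\star}(\alpha_j)$ by nearby level sets of $\Psi=\mathcal{G}$, defining a domain $\widetilde\R^2_\sigma$ on whose boundary $\mathcal{G}$ is \emph{constant}. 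One can then extend $\mathcal{G}$ to a globally Lipschitz $\tilde{\mathcal{G}}_\sigma$ satisfying $\nabla\times\tilde{\mathcal{G}}_\sigma = \chara_{\widetilde\R^2_\sigma}j(u_*)$ exactly, which turns the cross term into a pairing on all of $\R^2$ with no inner boundary terms. Your ``suitable truncation of $\Psi$'' cannot do this on round circles, since $\Psi$ is not constant there.

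Second, for the far field you appeal to Lemma~\ref{lem.Edef} and Proposition~\ref{Propannularlower} to control boundary terms on $\partial B_R$, but the relevant boundary integral $\int_{\partial B_R}\Psi\,\tau\cdot(j(u)/|u|-j(u_*))$ carries a factor $\Psi\sim D\ln R$, and you have no pointwise or trace control of $j(u)/|u|-j(u_*)$ on a fixed sphere. The paper instead uses a smooth cutoff of the special form $\chi_R=f_R(\mathcal{G})$, so that the commutator $\tilde{\mathcal{G}}_\sigma\nabla\times\chi_R$ is itself an exact curl $\nabla\times Y_R$ with $\|Y_R\|_{X_{\ln}}\to 0$; this eliminates outer boundary terms entirely and gives $\limsup_R|A_1|\lesssim s_\e n\ln R_\alpha/\sigma$ directly from the $X^*_{\ln}$ hypothesis.
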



{The  choice of $\sigma_\star$ arises from optimizing an error estimate in the 
proof of \eqref{gstab.c1}. }

In this proposition, the bounds appearing on the right-hand side are expressed in terms of the
energy $\E_\e (u)$ renormalized at infinity, whereas in the corresponding result
in \cite{JSp2}, the right-hand side involves the whole energy $E_\e$.

{Note that \eqref{gstab.h1} implies that  
\begin{equation} \label{restrict.ep.gamma} 
 s_\e \le 
\frac {c_2}{n^3}, \qquad
\e\le \frac {c_2}{n^3} \frac{ \rho_\alpha}{ \sqrt{\ln(\rho_\alpha/\e)}} \le \frac {c_2}{n^3}
 \end{equation} 
 and hence that the hypotheses of Proposition  \ref{Propannularlower}
are satisfied, as long as we insist that $c_2\le {c_1}$.
Some of the restrictions  in \eqref{gstab.h1} are also needed as hypotheses for
some "black box" results that we will use from \cite{JSp2}, see \eqref{differencesetareabound} below. }

%

An essential ingredient of the proof of Proposition \ref{Propgammastability} are the following lemmas which provide a lower bound for a single vortex in a ball.  

\begin{lemma}[\cite{JSp}, Lemma 6.8] \label{Singlevortexdecaylemma}
Let 
\[
I( \sigma, \e) := \inf \Big\{  \int_{B_\sigma} e_\e(u) \ dx \ : \ 
 u \in H^1(B_\sigma; \C), \  u= e^{i\theta} \hbox{ on } \p B_\sigma \Big\}.
\] 
Then $I(\sigma, \e) = I(\sigma/ \e , 1)$, and there exists a constant $\gamma$ such that 
\begin{equation}\label{gamma.def}
\LV I(\sigma,\e) -  \LB \pi \ln {\sigma \over \e} + \gamma \RB \RV \leq C {\e^2 \over \sigma^2}.
\end{equation}
\end{lemma}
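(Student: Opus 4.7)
The first assertion $I(\sigma,\e) = I(\sigma/\e, 1)$ is a pure scaling statement. Given a competitor $u$ on $B_\sigma$ with boundary datum $e^{i\theta}$, the rescaled map $v(y) = u(\e y)$ lies in $H^1(B_{\sigma/\e};\C)$ with the same angular boundary datum, and a change of variables gives
\begin{equation*}
\int_{B_\sigma} \Big(\tfrac12|\nabla u|^2 + \tfrac1{4\e^2}(|u|^2-1)^2\Big)\,dx
= \int_{B_{\sigma/\e}} \Big(\tfrac12|\nabla v|^2 + \tfrac14(|v|^2-1)^2\Big)\,dy.
\end{equation*}
Taking infima gives $I(\sigma,\e) = I(\sigma/\e,1)$, so it suffices to analyze $I(R,1)$ for $R = \sigma/\e$ large and prove $|I(R,1) - \pi \ln R - \gamma| \le C/R^2$.

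Next I would establish existence and symmetry of a minimizer $U_R$ for $I(R,1)$. Existence follows by the direct method: the energy is coercive on $e^{i\theta} + H^1_0(B_R)$ and weakly lower semicontinuous. By the symmetrization / uniqueness arguments of Mironescu and of Brezis--Bethuel--H\'elein, the minimizer has the form $U_R(x) = f_R(|x|) e^{i\theta}$, where $f_R:[0,R]\to[0,1]$ solves the radial ODE
\begin{equation*}
-f_R'' - \tfrac1r f_R' + \tfrac1{r^2} f_R = f_R(1 - f_R^2), \qquad f_R(0)=0,\ f_R(R)=1.
\end{equation*}
Then
\begin{equation*}
I(R,1) = 2\pi \int_0^R r\left(\tfrac12(f_R')^2 + \tfrac{f_R^2}{2r^2} + \tfrac14(1-f_R^2)^2\right)dr.
\end{equation*}

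The leading logarithm arises from the middle term: comparing with the limit profile $f$, the unique solution on $(0,\infty)$ of the same ODE with $f(0)=0$, $f(\infty)=1$, one has $f(r)\to 1$ exponentially and $r(1-f^2)$, $rf'^2$ integrable at infinity. Splitting $\int_0^R \pi f_R^2/r\,dr = \pi \ln R + \int_0^R \pi(f_R^2-1)/r\,dr$ (plus an easily controlled contribution near $r=0$), I define
\begin{equation*}
\gamma := \lim_{R\to\infty}\bigl(I(R,1) - \pi\ln R\bigr),
\end{equation*}
the existence of the limit being a standard consequence of the exponential decay of $1-f$ combined with $L^2$-convergence of $f_R$ to $f$ on compacts.

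The main obstacle is the sharp remainder $C/R^2$. I would prove it by comparing $U_R$ with the truncated competitor $\widetilde U_R := f(|x|) e^{i\theta}$ on $B_R$, suitably adjusted so that $\widetilde U_R = e^{i\theta}$ on $\p B_R$. Since $1 - f(r) = O(r^{-2})$ at infinity (obtained by linearizing the ODE about $f = 1$ and solving the resulting Bessel-type equation), the modification costs only $O(R^{-2})$ in energy, yielding $I(R,1) \le \pi \ln R + \gamma_\infty + CR^{-2}$ where $\gamma_\infty$ is the renormalized energy of $f$. For the matching lower bound, one uses the Euler--Lagrange equation for $U_R$ and a Pohozaev-type identity (multiply the ODE by $rf_R'$ and integrate by parts on $(0,R)$) to express the difference $I(R,1) - \pi \ln R - \gamma$ as a boundary term at $r=R$ controlled by $|1-f_R(R)|^2 + |f_R'(R)|^2$, both of size $O(R^{-2})$ by standard exponential/polynomial decay estimates for $f_R - f$ obtained from the maximum principle applied to the linearized operator. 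These pieces together give the $C\e^2/\sigma^2$ bound. The delicate part is matching the constants so that the $\gamma$ extracted from below agrees with the one identified from above; this is where one must invoke the uniqueness of the limit profile $f$.
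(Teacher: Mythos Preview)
The paper does not give its own proof of this lemma; it is quoted verbatim from \cite{JSp}, Lemma 6.8. So there is no in-paper argument to compare against, and your task is really to produce a self-contained proof sketch, which you have done along the correct broad lines (scaling, radial minimizer, upper bound by truncating the entire profile $f$, lower bound via a Pohozaev-type identity).

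There is, however, a genuine internal inconsistency that you should fix. Early in the argument you assert that ``$f(r)\to 1$ exponentially'' and that ``$r(1-f^2)$ is integrable at infinity''. Neither statement is true for the degree-one Ginzburg--Landau profile: linearizing the ODE about $f=1$ produces an inhomogeneous equation whose forcing term $1/r^2$ comes from the angular part $f/r^2$, so the decay of $1-f$ is only polynomial, $1-f(r)\sim\tfrac{1}{2r^2}$, and consequently $r(1-f^2)\sim 1/r$ is \emph{not} integrable. You later use exactly this correct $O(r^{-2})$ rate when building the competitor and estimating boundary terms, so the strategy survives, but the paragraph in which $\gamma$ is defined needs to be rewritten: the convergence of $I(R,1)-\pi\ln R$ does not come from exponential decay of $1-f$, but from the fact that what actually appears in the renormalized energy is $r(1-f^2)^2\sim r^{-3}$ and $r(f')^2\sim r^{-5}$, together with $(f^2-1)/r\sim r^{-3}$ in the logarithmic remainder. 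Once you correct this, the Pohozaev computation and the competitor estimate both yield errors of exact order $R^{-2}$, and the proof goes through.
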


and

\begin{lemma}[\cite{JSp}, Theorem 1.3] \label{Singlevortexenergylemma}
There exists an absolute constant $C$ such that if $u \in H^1(B_\sigma; \C)$ 
satisfies
\[
\LN J(u) \pm \pi \delta_0 \RN_{\dot{W}^{-1,1}(B_\sigma)} \leq {\sigma \over 4}
\]
then 
\begin{equation}
0 \leq \Sigma_{B_\sigma}(u) + C {\e \over \sigma} \sqrt{ \ln { \sigma\over \e}} + {C \over \sigma} \LN J(u) \pm \pi \delta_0 \RN_{\dot{W}^{-1,1}(B_\sigma)} 
\end{equation}
where 
\[
\Sigma_{B_\sigma(\alpha)}(u) = \int_{B_\sigma(\alpha)} e_\e(u) \ dx - I(\e, \sigma).
\] 
\end{lemma}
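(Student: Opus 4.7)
The plan is to reduce, by the scaling identity $I(\sigma,\e)=I(\sigma/\e,1)$, to the case $\sigma=1$ (with $\e$ replaced by $\e/\sigma$). In this setting the goal becomes
\[
\int_{B_1} e_\e(u)\,dx \ \ge \ I(1,\e) \ - \ C\e\sqrt{|\ln\e|} \ - \ C\,\LN J(u)\pm \pi\delta_0\RN_{\dot W^{-1,1}(B_1)}.
\]
I would prove this by a comparison/gluing argument against the radial minimizer $\phi_\e$ of $I(1,\e)$, which has boundary data $e^{i\theta}$. The plan is to construct a competitor $\tilde u$ that agrees with $u$ inside a slightly smaller ball and equals $e^{i\theta}$ on $\partial B_1$; then minimality of $\phi_\e$ gives
\[
I(1,\e) \ \le \ \int_{B_1} e_\e(\tilde u)\,dx \ = \ \int_{B_{1-\delta}} e_\e(u)\,dx \ + \ (\text{cost of interpolation on the annulus}),
\]
so the whole proof reduces to bounding the annular cost by the two error terms above.

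The gluing has three steps. (a) By a Fubini/mean-value argument, pick a radius $r\in(1-\delta,1)$ with $\int_{\partial B_r} e_\e(u)\,d\calH^1\lesssim \delta^{-1}\int_{B_1\setminus B_{1-\delta}} e_\e(u)\,dx$, and where $|u|$ is close to $1$ on $\partial B_r$ (using the potential term to control a bad set). (b) Show that for a suitable such $r$ the degree $\deg(u;\partial B_r)=\pm 1$: test $J(u)\mp\pi\delta_0$ against radial Lipschitz cutoffs localized near radius $r$ to conclude that, up to an error controlled by $\|J(u)\mp\pi\delta_0\|_{\dot W^{-1,1}(B_1)}/\delta$, the signed degree on $\partial B_r$ equals $\pm1$ for a set of positive measure of $r$'s. (c) On $B_1\setminus B_r$, interpolate between the phase of $u|_{\partial B_r}$ and $\theta$ on $\partial B_1$. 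Since the degrees match this can be done linearly in the phase, at energy cost $\lesssim \delta \int_{\partial B_r} e_\e(u)\,d\calH^1 + \delta^{-1}$, after absorbing a harmless $\e^2/\delta^2$ potential contribution.

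Combining, the annular cost is of order
\[
\frac{1}{\delta} \ + \ \delta\cdot\LC|\ln\e|+ \text{annular bulk}\RC \ + \ \frac{1}{\delta}\LN J(u)\pm\pi\delta_0\RN_{\dot W^{-1,1}(B_1)}.
\]
Optimizing in $\delta$ (choosing $\delta\sim \e\sqrt{|\ln\e|}$ plus a Jacobian-dependent piece), and using the a priori bound $\int_{B_1} e_\e(u)\le I(1,\e)+\Sigma_{B_1}(u)$ to handle the bulk term (the result is vacuous unless $\Sigma_{B_1}(u)$ is already small), produces exactly the two stated error terms. The hypothesis $\|J(u)\pm\pi\delta_0\|_{\dot W^{-1,1}}\le\sigma/4$ is precisely what guarantees that the chosen $\delta$ remains small enough that the "good slice" $r$ lies in the annulus.

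The main obstacle is step (b): the Jacobian is known only in a weak dual norm, and one must convert this weak information into a pointwise-in-$r$ statement that the degree equals $\pm 1$ on some good circle. This is handled by dualizing against radial test functions, but it is this step that forces the $\|J(u)\pm\pi\delta_0\|_{\dot W^{-1,1}}/\sigma$ error to appear in the final bound. A secondary subtlety is that the competitor $\tilde u$ uses boundary data of $u$ that may have a slightly displaced "center of vorticity"; one must either translate $\phi_\e$ to compensate (and pay a bounded translation cost) or absorb the displacement into the Jacobian-discrepancy error — either way the $\e\sqrt{|\ln\e|}$ term reflects the unavoidable small-core cost of the single-vortex ansatz.
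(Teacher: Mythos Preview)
The paper does not prove this lemma; it is quoted verbatim from \cite{JSp}, Theorem~1.3, so there is no ``paper's proof'' to compare against beyond the citation. Your sketch is therefore being measured against the actual argument in \cite{JSp}, and there it has a concrete quantitative gap.

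The problem is your accounting of the annular gluing cost. You write this cost as
\[
\frac{1}{\delta} \; + \; \delta\bigl(|\ln\e| + \text{bulk}\bigr) \; + \; \frac{1}{\delta}\,\LN J(u)\pm\pi\delta_0\RN_{\dot W^{-1,1}},
\]
and then claim that choosing $\delta\sim\e\sqrt{|\ln\e|}$ produces the stated error. But with a bare $1/\delta$ present, that choice makes the first term of order $\e^{-1}|\ln\e|^{-1/2}$, which is enormous; and if instead you balance $1/\delta$ against $\delta|\ln\e|$, you get $\delta\sim|\ln\e|^{-1/2}$ and a total error $\sim|\ln\e|^{1/2}$, not $\e\sqrt{|\ln\e|}$. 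These differ by a full factor of $\e$. The $1/\delta$ term is real in your construction: it comes from the radial derivative of the interpolated phase, which is $(\psi-\theta-c)/\delta$, and even after choosing $c$ optimally and using Poincar\'e on the circle, $\|\psi-\theta-c\|_{L^2(\partial B_r)}^2$ is only controlled by $\int_{\partial B_r}|\partial_\tau\psi-1|^2$, which is $O(1)$ on a generic good slice, not $O(\e^2)$.

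To get the genuine $\e\sqrt{\ln(\sigma/\e)}$ error one needs a mechanism that ties the deviation of $u$ from the optimal profile directly to the potential term $\frac{1}{4\e^2}(1-|u|^2)^2$, whose total contribution is at most $\e^2$ times the energy $\sim\e^2|\ln\e|$; this is where the factor $\e\sqrt{|\ln\e|}$ ultimately comes from. The argument in \cite{JSp} does this via refined Jacobian/energy comparison rather than a direct competitor gluing, and the competitor approach you outline, without substantial additional input controlling the phase mismatch on $\partial B_r$ at the $O(\e)$ level, cannot close.
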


The rest of this section is devoted to the

\begin{proof}[proof of Proposition \ref{Propgammastability}]
The outline of the proof is the same as that of 
Theorem 2 in \cite{JSp2}. The new point is to check that 
far-field contributions to the energy can be controlled
by  the energy $\E_\e(u)$ renormalized at infinity and 
information \eqref{gstab.h1} about the $X_{\ln}^*$ norm of $J(u)$.

1. 
{For $\sigma$ to be chosen later, but satisfying
\begin{equation}\label{sig.constraints}
8 s_\e \ln R_\alpha \le \sigma \le \frac{\rho_\alpha}{40 n}
\end{equation}
we compute (recalling the definitions \eqref{EE.def} and \eqref{Wep.def}
of $\E_\e(u)$ and $W_\e(\alpha,d)$) }
\begin{align}
\Sigma(u; \alpha, d)
& = \E_\e(u) - W_\e(\alpha, d) \nonumber \\
& = \lim_{R\to \infty}   \LB\int_{B_{R}}  e_\e(u) \ dx  \ - \   \pi D^2 \ln R\RB - \LB n \LC \pi \ln\frac 1 \e + \gamma \RC + 
  W(\alpha, d)   \RB  \nonumber\\
  & = \lim_{R\to \infty}  \int_{B_{R, \sigma}} e_\e(u) \ dx -  \LB \pi D^2 \ln R + n \pi  \ln\frac 1 \sigma  + W(\alpha, d) \RB    \nonumber \\
& \quad   +  \sum_{j=1}^n  \LB \int_{B_\sigma(\alpha_j)} e_\e(u)  \ dx-  \LC  \pi \ln \frac \sigma \e + \gamma \RC  \RB   \nonumber\\
    & \stackrel{\eqref{FiniteWapprox}}
    {\geq} 
\lim_{R\to \infty}  \int_{B_{R, \sigma}} \LB e_\e(u)  -  e_\e(u_*) \RB \ dx + \sum_{j=1}^n 
 \Sigma_{B_\sigma}(u)    - C {n^3 \sigma^2 \over \rho_\alpha^2}.
\label{EEEa}\end{align}
{By \eqref{convertnorm},  \eqref{gstab.h1}, and \eqref{sig.constraints}, }
\begin{align*}
\LN J(u) - \pi d_j \delta_{\alpha_j} \RN_{\dot{W}^{1,1}(B_\sigma(\alpha_j))}
 & \leq \LC 1 + \LC \ln (\sigma + |\alpha_j|) \RC^+ \RC 
 \LN J(u) - \pi \sum_{j=1}^n d_j \delta_{\alpha_j}  \RN_{X_{\ln}^*} \\ 
 & \leq 2 s_\e \ln R_\alpha < {\sigma \over 4} .
\end{align*}
Then Lemma \ref{Singlevortexenergylemma} and the assumption
that $s_\e\ln R_\alpha \ge \e\sqrt{\ln (\rho_\alpha/\e)}$ 
imply that
\[
 \Sigma_{B_\sigma}(u) 
  \geq - C {\e \over \sigma} \sqrt{ \ln { \sigma \over \e}}
- C { s_\e \ln R_\alpha  \over \sigma} \gtrsim  - {s_\e  \ln R_\alpha \over \sigma},
\]
and it follows that
\begin{equation} \label{ExpansionofEnergyExcess}
\lim_{R\to \infty}\int_{B_{R,\sigma}} [e_\e(u) - e_\e(u_*) ] \ dx
\leq \Sigma(u; \alpha, d) + 
 C s_\e  { n \ln R_\alpha  \over \sigma} + C {n^3 \sigma^2 \over \rho_\alpha^2}.
\end{equation}

2.
We will use the identity
\begin{equation}
\begin{split}
e_\e(|u|) + 
\frac 12\left| \frac {j(u)}{  | u | } -  j(u_*)  \right|^2  
& = 
[e_\e(u) - e_\e(u_*)]  + j(u_*)\cdot \LC j(u_*) - \frac{j(u)}{|u|} \RC .
\end{split}
\label{eqn:quotient}\end{equation}
Let $(\chi_R)_{R>1}$ denote a family of functions to be specified shortly,
which will be chosen to satisfy \eqref{chiRdef1}, \eqref{chiRdef2}.
We multiply both sides of \eqref{eqn:quotient} by
$\chi_R$,
integrate over $\R^2_\sigma$, and let $R$ tend to $\infty$.
In view of \eqref{limits.exist}, \eqref{ExpansionofEnergyExcess} and
properties of $(\chi_R)_{R\ge1}$,
this yields
\begin{align}
\int_{\R^2_\sigma}
e_\e(|u|) + 
\frac 12\left| \frac {j(u)}{  | u | } -  j(u_*)  \right|^2  \ dx
&\le
\Sigma(u;\alpha, d) +
 C s_\e  { n \ln R_\alpha  \over \sigma} + C
 {n^3 \sigma^2 \over \rho_\alpha^2}
 \nonumber\\
&\hspace{3em}
+
\lim_{R\to \infty}\int_{\R^2_\sigma} \chi_R  j(u_*)\cdot \LC j(u_*) - \frac{j(u)}{|u|} \RC \ dx.
\label{reducetoA}\end{align}
So our main task is to estimate the last term.
It is convenient to define
\[
\mathcal{G}(x; \alpha, d) = \sum_{j=1}^n d_j \ln |x - \alpha_j |, \qquad \mbox{ and }
\]
\[ 
\widetilde \R^2_\sigma 
:= 
\hbox{  largest open subset of $\R^2_\sigma$ such that }
\mathcal{G} \hbox{ is constant on
every component of } \partial \widetilde \R^2_\sigma. 
\] 
Thus, for $\sigma$ small, $\widetilde \R^2_\sigma$ looks like 
$\R^2$ with a (closed) slightly distorted ball of radius $\approx \sigma$ 
removed around  each $\alpha_i$.

We will write 
\[
\int_{\R^2_\sigma} \chi_R  j(u_*)\cdot \LC j(u_*) - \frac{j(u)}{|u|} \RC \ dx = A_1 + A_2+ A_3
\]
where
\begin{align*}
A_1
&:= 
\int_{\widetilde \R^2_\sigma} \chi_R\ j(u_*) \cdot (j (u_*) - j(u)) \ dx, \\
A_2
&:=
\int_{\widetilde \R^2_\sigma} \chi_R \  j(u_*) \cdot \frac{j(u)}{|u|} \ (|u| -1) \ dx  ,\\
A_3
&:=
\int_{\R^2_\sigma \setminus \widetilde \R^2_\sigma} \chi_R \ j(u_*) \cdot (j(u_*) -\frac{ j(u)}{|u|}) \ dx.
\end{align*}

{\bf Estimate of $A_3$}: 
In \cite{JSp2}
it is checked\footnote{In \cite{JSp2} this is proved for a bounded domain $\Omega$, but the 
proof is also valid on $\R^2$, where in fact it becomes a little simpler, since there are no 
boundary terms in ${\mathcal{G}}$.}
that
\begin{equation} \label{differencesetareabound}
|\R^2_\sigma \setminus \widetilde \R^2_\sigma| 
\lesssim {n^2 \sigma^3 \over \rho_\alpha},
\end{equation}
as long as $\sigma \le \rho_\alpha/40n$, {which we have assumed in \eqref{sig.constraints}.}
Then by \eqref{Ggradientestimateerror1} and \eqref{differencesetareabound}
\begin{align}
|A_3|
& \leq  \int_{\R^2_\sigma \setminus \widetilde \R^2_\sigma}  \LV j(u_*) \RV^2 
+ {1\over 4}  \LV j(u_*) - {j(u) \over |u|} \RV^2  \ dx
\nonumber
\\
& \leq  C {n^4 \sigma \over \rho_\alpha} 
+ {1\over 4} \int_{\R^2_\sigma}  \LV j(u_*) - {j(u) \over |u|} \RV^2  \ dx.
\label{A3.mainest}
\end{align}

{\bf Estimate of $A_2$}:
Since $\widetilde \R^2_\sigma \subset \R^2_\sigma$ we have (using the notation \eqref{cutout})
\begin{align*}
|A_2| & \leq 
 \int_{\R^2_\sigma} \LV j(u_*) \RV  \LV  {j(u) \over |u|} \RV \LV 1 - |u| \RV \ dx
 \\
 & \leq   \int_{B_{R_\alpha},\sigma} \LV j(u_*) \RV \LV {j(u) \over |u|}  \RV \LV 1 - |u|^2 \RV \ dx
 \\
 &\qquad \qquad\qquad
 +  \int_{\R^2 \backslash B_{R_\alpha}} \LV j(u_*) \RV^2  \LV 1 - |u|^2 \RV + \LV j(u_*) \RV \LV {j(u) \over |u|}  - j(u_*) \RV \LV 1 - |u|^2 \RV \ dx.
\end{align*}
From \eqref{energyupperboundBigBall} we have 
\begin{align*}
 \int_{B_{{R_\alpha}, \sigma}} \LV j(u_*) \RV \LV {j(u) \over |u|} \RV \LV 1 - |u|^2 \RV  \ dx
& \leq C \e \LN j(u_*) \RN_{L^\infty( B_{{R_\alpha},\sigma} )}
\int_{B_{R_\alpha} }  e_\e(u)   \ dx
\\
& \le 
 C \e {n \over \sigma} 
\left(
\E_\e(u) +  \pi D^2 \ln R_\alpha\right).
\end{align*}
On the other hand, 
\begin{align*}
& \int_{\R^2 \backslash B_{R_\alpha}} \LV j(u_*) \RV^2  \LV 1 - |u|^2 \RV + \LV j(u_*) \RV \LV {j(u) \over |u|}  - j(u_*) \RV \LV 1 - |u|^2 \RV \ dx \\
& \stackrel{\eqref{Ggradientestimateerror}}{\leq} C \e \int_{R_\alpha}^\infty {n^4 \over r^3} dr + \e \int_{\R^2 \backslash B_{R_\alpha}}  e_\e(|u|)  \ dx\\
& \quad 
+ C {n \e\over R_\alpha} 
\int_{\R^2 \backslash B_{R_\alpha}} \left[ \LV { j(u) \over |u|} - j(u_*)\RV^2 + 
 e_\e(|u|) \right]  \ dx \\
& \leq C \e { n^4 \over R_\alpha^2} + C \e{n \over R_\alpha}\LB  \int_{\R^2 \backslash B_{R_\alpha}} e_\e(|u|) + {1\over2} \LV {j(u) \over |u|} -j(u_*) \RV^2 \RB \ dx + \e \int_{\R^2 \backslash B_{R_\alpha}} e_\e(|u|) \ dx.
\end{align*}
{It follows from  
\eqref{restrict.ep.gamma}, after taking $c_2$ smaller if necessary, that
 $C {\e  n \over R_\alpha} \leq {1\over 10}$, and then}
\begin{equation} \label{A1estimatebound}
|A_2| \leq C {\e n \over \sigma} \LB \E_\e(u) + \pi D^2 \ln R_\alpha \RB + C {\e n^4 \over R_\alpha^2} +{1 \over 8}  \LB \int_{\R^2_\sigma} e_\e(|u|) + {1\over4} \LV { j(u) \over |u|} - j(u_*) \RV^2  \RB \ dx.
\end{equation}
Estimate 
\eqref{A1estimatebound} will be used also in the proof of the localization result, Proposition~\ref{P2}.  

{\bf Estimate of $A_1$}:
First note that, exactly as in \cite{JSp2}, we can write
\begin{equation}
\chara_{\widetilde \R^2_\sigma} j(u_*) := \nabla \times \tilde {\mathcal{G}}_\sigma
\label{Gts1}\end{equation}
where $\chara$ is the characteristic function,
and  $\tilde {\mathcal{G}}_\sigma$ is the Lipschitz continuous function
of the form
\begin{equation}
\tilde {\mathcal{G}}_\sigma = \begin{cases}
{\mathcal{G}}(\, \cdot \,; \alpha,d) &\mbox{ in }\widetilde \R^2_\sigma\\
\mbox{constant}&\mbox{ in each component of  $\R^2\setminus \widetilde \R^2_\sigma$.}
\end{cases}
\label{Gtildesig}\end{equation}
This is the point of replacing $\R^2_\sigma$ by $\widetilde \R^2_\sigma$
in our above decomposition.
It is straightforward to check from \eqref{Gts1}, \eqref{Gtildesig}
and \eqref{Ggradientestimateerror1} that 
\begin{equation}
\LN \tilde {\mathcal{G}}_\sigma   \RN_{X_{\ln }}  \lesssim {n \ln R_\alpha \over \sigma}  \label{boundGsigmaXnorm}
\qquad
\qquad
\mbox{for $\sigma \leq \rho_\alpha$.}
\end{equation}

We now specify that 
\[
\chi_R =
\begin{cases}
1&\mbox{ in }B_R\\
f_R({\mathcal{G}})	&\mbox{ in }\R^2\setminus B_R
\end{cases}
\]
where $f_R:\R\to [0,1]$ is smooth and satisfies
\[
f_R(s)=  \begin{cases}
1 &\mbox{ if } s \le D \ln R + \beta \\
0 &\mbox{ if } s \ge D \ln 2R - \beta
\end{cases},\qquad
\mbox{ for some $\beta>0$}
\]
with $|f_R'|$ and $|f_R''|$ bounded uniformly in $R$.

We must check that $\chi_R$ satisfies \eqref{chiRdef1}
and \eqref{chiRdef2} for all large $R$. Toward this end,
note that if $|x|$ is sufficiently large
(which implies that $\tilde {\mathcal{G}}_\sigma(x) = {\mathcal{G}}(x)$), then 
\begin{equation}
\left|\tilde {\mathcal{G}}_\sigma(x) - D\ln |x| \right|
\ = \ 
\left|\sum_{i=1}^n d_i ( \ln|x-\alpha_i| - |\ln |x| ) \right|
\ = \ 
\left|\sum_{i=1}^n d_i  \ln \frac {|x-\alpha_i|}{|x|} \right|
\ \le  2  n \frac{R_\alpha}{|x|},
\label{Gsig1}\end{equation}
and it follows from this that if $R$ is large enough (which we henceforth assume to
be the case), then $\chi_R$ is smooth and
satisfies \eqref{chiRdef1}.
Moreover, \eqref{chiRdef2} follows by noting that
\begin{equation}
|\nabla \chi_R| = |f'_R( {\mathcal{G}} )| \ |\nabla {\mathcal{G}}_\sigma|
\le \frac {C(n,D)}R \qquad \mbox{ on }B_{2R}\setminus B_R \supset \mbox{supp}(\nabla \chi_R).
\label{chiRlip}\end{equation}

Now
\[
\chi_R \nabla \times {\mathcal{G}}_\sigma = 
\nabla \times ( \chi_R \tilde {\mathcal{G}}_\sigma) -  \tilde {\mathcal{G}}_\sigma\nabla\times \chi_R
\]
and, since $\tilde {\mathcal{G}}_\sigma= {\mathcal{G}}$ on $\mbox{supp}\nabla\chi_R$, our choice of $\chi_R$ implies that 
\[
\tilde {\mathcal{G}}_\sigma \nabla\times \chi_R = \nabla \times Y_R\qquad\qquad\mbox{ for }Y_R := 
\begin{cases}
\mbox{constant} &\mbox{ in }B_R\\
F_R({\mathcal{G}})&\mbox{ in }\R^2\setminus B_R
\end{cases}
\]
where the constant is chosen to make $Y_R$ continuous (in fact it is $F_R(0)$) and 
\[
F_R(s) =  -\int_s^\infty t f_R'(t) dt.
\]
Since $\chi_R {\mathcal{G}}$ and $Y_R$ are compactly supported, we can integrate by
parts to find that
\begin{align*}
A_1 
&
\overset {\eqref{Gts1}}
=
\int_{\R^2}  \chi_R \ \nabla\times \tilde {\mathcal{G}}_\sigma \cdot (j(u_*)-j(u)) \ dx\\
&=
2\int_{\R^2} \left[
\chi_R\tilde {\mathcal{G}}_\sigma - Y_R
\right]
 \left(
J(u) - \sum \pi d_i \delta_{\alpha_i}
\right) \ dx.
\end{align*}
Thus
\[
|A_1| \lesssim \left( 
\left\| \chi_R \tilde {\mathcal{G}}_\sigma \right\|_{X_{\ln}}
+\left\| Y_R \right\|_{X_{\ln}}
\right) 
\left\|
J(u) - \sum \pi d_i \delta_{\alpha_i} \right\|_{X^*_{\ln}}.
\]
We know from \eqref{normlim} and \eqref{boundGsigmaXnorm} that 
\[
\lim_{R\to\infty}\left\| \chi_R \tilde {\mathcal{G}}_\sigma \right\|_{X_{\ln}} =
\left\| \tilde {\mathcal{G}}_\sigma \right\|_{X_{\ln}} \le n \ln R_\alpha/\sigma,
\]
and it is easy to check from \eqref{chiRlip} that  $\| Y_R\|_{X_{\ln}}\to 0$
as $R\to \infty$, so we conclude that
\begin{equation}
\limsup_{R\to \infty}|A_1| \lesssim s_\e n \ln R_\alpha/\sigma.
\label{A1.est}\end{equation}



3. 
By combining \eqref{reducetoA} with the above estimates of $A_1, A_2$ and $A_3$,
we {find that
\begin{equation} \label{fixedradiusgammastability} \begin{split}
& \LB \frac12 \int_{\R_\sigma(\alpha)} e_\e(|u|) + {1\over 4} \LV { j(u)\over |u| } - j(u_*) \RV^2  \ dx \RB
- \Sigma(u; \alpha, d)  \\
& \qquad \qquad \qquad  \lesssim  {n \over \sigma} \LB s_\e \ln R_\alpha + \e \E_\e(u)  + \e \pi D^2 \ln R_\alpha \RB + { n^4 \sigma \over \rho_\alpha}  
\end{split}\end{equation}
for any $\sigma$ satisfying \eqref{sig.constraints}.}
If $\sigma_\star = \sqrt{ {\rho_\alpha \over n^3} \LC s_\e \ln R_\alpha + \e \E_\e(u) + \e \pi D^2 \ln R_\alpha  \RC}$ then the right-hand side is minimized and we obtain
\eqref{gstab.c1}. 

4. Next we note that 
since $(1-a)^4 \le (1-a^2)^2$ for $a>0$, 
\[
\int_{\R^2} (1-|u|)^4 \ dx \lesssim  \e^2\int_{\R^2} e_\e(|u|) \ dx \lesssim \e^2 \int_{B_{R_\alpha}}
 e_\e(u) \ dx \ + \ \e^2 \int_{\R^2_\sigma} e_\e(|u|) \ dx \ \le \ { \e^2\calP, }
\]
where we have used  \eqref{energyupperboundBigBall}, \eqref{gstab.c1}
for the last inequality. This is \eqref{moduL4L2}.

5.  Finally, we establish the bound \eqref{Lpcurrentboundsbigball} on the current, modifying
the proof from \cite{JSp2}. To simplify notation we will write
$\chi_{1} :=  \chi_{\cup_j B_{\sigma_\star}(\alpha_j)}$
and $\chi_1 = 1-\chi_1 = \chi_{\R^2_{\sigma_\star}}$.
We decompose $j(u)-j(u_*)$ differently in $\cup_j B_{\sigma_\star}(\alpha_j)$ and
$\R^2_{\sigma_\star}$, as follows:
\begin{align*}
j(u)-j(u_*) & = \chi_1\left[ \frac {j(u)}{|u|}(|u|-1) + \frac{j(u)}{|u|}  - j(u_*)\right]  \\
& \quad +
 \chi_2 \left[ (\frac {j(u)}{|u|}- j(u_*)) + (\frac {j(u)}{|u|}- j(u_*))(|u|-1) + j(u_*)(|u|-1)\right].
\end{align*}
We now proceed to bound all the summands in either $L^{4/3}$ or $L^2$.
For the terms supported on $\cup B_{\sigma_\star}(\alpha_j)$ we estimate
\begin{align*}
\left\|\chi_1\frac {j(u)}{|u|}(|u|-1)
\right\|_{L^{4/3}} 
&\le \| \nabla u\|_{L^2(B_{R_\alpha})} \| 1-|u| \,\|_{L^4(B_{R_\alpha})} \ \le \e^{1/2}\calP, \\
\left\|\chi_1\frac {j(u)}{|u|}
\right\|_{L^{4/3}} 
&\le \| \frac {j(u)}{|u|}\|_{L^2(B_{R_\alpha})}\  \| \chi_1\|_{L^4} \le   \sigma_\star^{1/2}  \calP,\\
\left\|\chi_1\ j(u_*)
\right\|_{L^{4/3}}  & \leq C \LC n \int_0^{\sigma_\star} \LC { n \over r} \RC^{4/3} r dr \RC^{3/4} 
\le   \sigma_\star^{1/2} \calP,
\end{align*}
where the last estimate uses the explicit form of $j(u_*)$.
Outside of $\cup B_{\sigma_\star}(\alpha_j)$, 
\begin{align*}
\| \chi_2 (\frac {j(u)} {|u|}- j(u_*))\|_{L^2}
&\le |\Sigma|^{1/2} + s_\e^{1/4}\calP,
\\
\| \chi_2 (\frac {j(u)}{|u|}- j(u_*))(|u|-1) \|_{L^{4/3}}
&\le \| \chi_2 (\frac {j(u)}{|u|}- j(u_*))\|_{L^2}\  \| 1-|u|\, \|_{L^4} \le \e^{1/2}\calP,
\\
\| \chi_2  j(u_*)(|u|-1) \|_{L^{4/3}}
&\le
\| \chi_2 j(u_*) \|_{L^4} \ \| (1-|u|^2)\|_{L^2} \ \le \e \calP.
\end{align*}
Since $\e \le \sigma_\star \le s_\e^{1/2}\calP$, this implies 
 \eqref{Lpcurrentboundsbigball}.

\end{proof}


\section{Bounds on the localization of the Jacobian}
\label{sec.loc}

The second main element of the proof of Theorem \ref{thmdynamics}, also adapted from \cite{JSp2},
is

\begin{proposition}  \label{P2}
{There exists a positive constant $c_3 \le c_2$ such that 
for any $u\in  [U^D] + H^1(\R^2; \C)$ with $D\ne 0$, if there exist 
distinct points
$\alpha_1, \ldots, \alpha_n  \in \R^2$
and $d_1,\ldots d_n \in \{\pm 1\}$
such that 
\begin{equation}  
  \left\| J(u) - \sum_{j=1}^n \pi d_j \delta_{\alpha_j} \right\|_{X^*_{\ln}} 
\  \le 
\ \frac{ c_3\rho_\alpha }{ n^6 \ln R_\alpha} ,  
\label{p2.h1}\end{equation}
then there exist $\xi_1,\ldots, \xi_n\in\R^2$ such that 
$|\xi_i - \alpha_i|\le   \frac{ \rho_\alpha}{80 n^4 }$ for all $i$, and 
\begin{equation}\label{p2.c1}
\left\| J(u) - \pi \sum _{i=1}^n d_i\delta_{\xi_i} \right\|_{X^*_{\ln}}  
\ \le \  \e\left[n (C + \Sigma  )^2
e^{C\Sigma } +  \calP \right]
\end{equation} 
where
$\Sigma =
\Sigma(u; \alpha, d) =\E_\e(u) - W_\e(\alpha,d)$ and the notation
$\calP$ is introduced in \eqref{P.def}.}
\end{proposition}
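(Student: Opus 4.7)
The plan is to adapt to the infinite-energy setting the strategy of \cite{JSp2}, Proposition 2, taking Proposition~\ref{Propgammastability} as a key input. First I would verify that the hypothesis \eqref{p2.h1} (with $s_\e := c_3 \rho_\alpha/(n^6 \ln R_\alpha)$) implies the smallness conditions \eqref{gstab.h1}--\eqref{sigmastardefinition} of Proposition~\ref{Propgammastability}. This is routine provided $c_3 \le c_2$ is chosen small enough, using the parameter conventions of Section~\ref{ss.noworries}, in particular the fact that $\rho_\alpha^{-1}$, $n$, $R_\alpha$, and $\E_\e(u)$ are all dominated by $\e^{-\beta}$ for any small $\beta$. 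The application of Proposition~\ref{Propgammastability} then yields both the surplus-energy inequality \eqref{gstab.c1} and the $L^{4/3}+L^2$ current bound \eqref{Lpcurrentboundsbigball}.

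Second, I would define the refined positions $\xi_j$ by a first-moment (barycenter) construction. Fix smooth cutoffs $\eta_j$ supported in $B_{\rho_\alpha/8}(\alpha_j)$ and equal to $1$ on $B_{\rho_\alpha/16}(\alpha_j)$, with $|\nabla \eta_j| \lesssim n/\rho_\alpha$; these have pairwise disjoint supports. Both $\eta_j$ and the vector-valued weight $(x-\alpha_j)\eta_j$ have $X_{\ln}$ norm bounded polynomially in $n$, $\rho_\alpha^{-1}$, and $\ln R_\alpha$, so testing the defect in \eqref{p2.h1} against them gives
\[
\Bigl|\int \eta_j J(u)\,dx - \pi d_j\Bigr| + \rho_\alpha^{-1}\Bigl|\int (x-\alpha_j)\eta_j J(u)\,dx\Bigr| \ \lesssim\  s_\e \cdot \tfrac{n \ln R_\alpha}{\rho_\alpha}.
\]
I then set $\xi_j := \alpha_j + (\pi d_j)^{-1}\int (x-\alpha_j)\eta_j(x) J(u)(x)\,dx$. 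Inserting the value of $s_\e$, this yields $|\xi_j - \alpha_j| \lesssim c_3 \rho_\alpha/n^5$, which is comfortably below $\rho_\alpha/(80 n^4)$ for $c_3$ small.

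To estimate $\|J(u) - \pi \sum d_i \delta_{\xi_i}\|_{X^*_{\ln}}$, I would take any $\phi$ with $\|\phi\|_{X_{\ln}} \le 1$ and split the pairing $\int \phi\, d(J(u) - \pi\sum d_i\delta_{\xi_i})$ into local contributions on each $B_{\rho_\alpha/8}(\alpha_j)$ plus a far-field piece on $\R^2_{\rho_\alpha/8}(\alpha)$. The far-field is handled by pointwise bounds $|J(u)| \le e_\e(|u|) + \tfrac 14 |j(u)/|u| - j(u_*)|^2 + \text{lower-order modulus terms}$, giving a contribution controlled by the right-hand side of \eqref{gstab.c1} together with \eqref{moduL4L2}, all absorbable into the $\calP$ factor. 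For each local contribution, integration by parts via $J(u) = \tfrac 12 \nabla \times j(u)$ produces $\int (\eta_j\phi) J(u)\,dx = \tfrac 12 \int \nabla(\eta_j\phi)\times j(u)\,dx$. Comparing with the canonical harmonic map $u_*(\xi,d)$ (whose current, by \eqref{jstar.eqns}, produces exactly $\pi d_j \phi(\xi_j)$ upon pairing with $\nabla(\eta_j\phi)$), the residual $\tfrac 12 \int \nabla(\eta_j\phi)\times (j(u) - j(u_*(\xi,d)))\,dx$ is estimated via \eqref{Lpcurrentboundsbigball} combined with a Taylor expansion of $\phi$ about $\xi_j$ whose zeroth and first-order moments against $J(u)$ are precisely those canceled by the construction of $\xi_j$.

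The main obstacle, and the source of the factor $\e\, n(C+\Sigma)^2 e^{C\Sigma}$ in \eqref{p2.c1}, is sharpening the error from the crude $s_\e^{1/4}$ scale coming directly from Proposition~\ref{Propgammastability} down to the much finer $\e$ scale. This demands showing that $J(u) = \det \nabla u$ is actually concentrated on a set of diameter $\lesssim n\e(C+\Sigma)^2 e^{C\Sigma}$ around each $\xi_j$, rather than on the $\sigma_\star$-ball given by $\Gamma$-stability alone. I would obtain this through a Sandier--Jerrard-type ball-construction argument applied to the transition region $\{|u| < 1/2\}$, growing radii geometrically from the microscopic scale $\e$ until vortex balls either merge or reach a macroscopic scale; the exponential dependence on $\Sigma$ accumulates across the logarithmically many stages, and the polynomial prefactor $\calP$ absorbs all error contributions from the cutoffs, Taylor remainders, and modulus-excess terms.
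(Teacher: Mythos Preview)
Your plan has a genuine gap in the far-field step, and the definition of $\xi_j$ you propose will not by itself yield the $\e$-scale conclusion.

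First, the far-field estimate. The pointwise bound you claim,
\[
|J(u)| \le e_\e(|u|) + \tfrac14\Big|\tfrac{j(u)}{|u|}-j(u_*)\Big|^2 + \text{lower-order modulus terms},
\]
is not correct. Since $J(u)=\nabla|u|\times \tfrac{j(u)}{|u|}$, the best pointwise inequality of this shape is $|J(u)|\le e_\e(|u|)+|j(u)/|u|-j(u_*)|^2 + |j(u_*)|^2$, and the term $|j(u_*)|^2$ is \emph{not} lower order: its integral over $\R^2_\sigma$ is infinite, and even on any large annulus it is of order $n^2\ln R$. So integrating your bound against $\phi$ on the complement of the vortex balls gives, at best, a contribution of order $\Sigma + C$ coming from \eqref{gstab.c1}, not the required $O(\e)$. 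The current bound \eqref{Lpcurrentboundsbigball} is likewise only $\sqrt{|\Sigma|}+s_\e^{1/4}\calP$ and cannot be upgraded to $O(\e)$ by this route.

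Second, the mechanism that actually produces the factor $\e$ in \eqref{p2.c1} is entirely absent from your outline. The paper passes to the \emph{modified} Jacobian $J'(u)$ (Lemma~\ref{L.JJprime} gives $\|J(u)-J'(u)\|_{X^*_{\ln}}\le \e\calP$), which is supported on $\{|u|<\tfrac12\}$. On each ball $B_\sigma(\alpha_j)$ the points $\xi_j$ and the estimate $\|J'(u)-\pi d_j\delta_{\xi_j}\|_{Lip^*}\le \ell_\e(C+\Sigma)$ with $\ell_\e=\e C(C+\Sigma)e^{C+\Sigma}$ come from the single-vortex localization Lemma~\ref{localizationsinglevortexlemma}; this is where the exponential in $\Sigma$ originates. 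In the far field one sets $w=u/u_*$, shows $\int_{\R^2_\sigma}e_\e(w)\le C(C+\Sigma)$, and then invokes Lemma~\ref{farfieldJacobiancontrol} to get $\int_{\R^2_\sigma}\phi J'(w)\le \e C(C+\Sigma)e^{C(C+\Sigma)}$. A separate short argument controls $J'(u)-J'(w)$ on $\R^2_\sigma$ by $\e\calP$. Your barycenter $\xi_j$ is not the paper's $\xi_j$, and the Taylor-remainder cancellations you describe do not substitute for these black boxes; the first and second moments of $J(u)$ against your cutoffs are only known to accuracy $s_\e\calP$, not $\e\calP$. The closing remark about a ``Sandier--Jerrard-type ball construction'' gestures at the right tools but does not replace the specific statements of Lemmas~\ref{localizationsinglevortexlemma} and~\ref{farfieldJacobiancontrol}, both of which encode the $\e$-prefactor you need.
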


The proof of Proposition~\ref{P2} follows arguments
developed 
for a bounded domain in \cite{JSp2},
with modifications
due for example to the differences between the 
weak topologies $X_{\ln}^*$ and $\dot{W}^{-1,1}(\Omega)$ and the fact
that we control only $\E_\e(u)$, rather than $\int e_\e(u) dx$.

{Assumption \eqref{p2.h1} is a little stronger than  the corresponding assumption in \eqref{gstab.h1} in Proposition~\ref{Propgammastability} due to requirements that arise in  step 1 of the proof of  Proposition~\ref{P2}.
}

An essential tool in establishing Proposition \ref{P2} is the modified Jacobian, $J'(u)$, of Alberti-Baldo-Orlandi \cite{ABO}. A convenient definition may be given by setting
{
\begin{equation}\label{auxg.def}
J'(u) := J(u'),\qquad
u' := g(|u|) u, \qquad 
\end{equation}
for some fixed smooth $g:\R\to \R$   such that
$g(s) = \frac 1s$ if $s\ge \frac 12$.
Then $|u'|=1$ whenever $|u|\ge 1/2$,} 
from which one can deduce that
$\operatorname{supp}(J'(u)) \subset \{ |u| < 1/2\}$.
Thus $J'(u)$ is more concentrated than $J(u)$.
One can also check that
\begin{equation}  \label{quantizationmodifiedjac}
\deg(u, \p V) = {1\over \pi} \int_V J'(u) \ dx \hbox{ if } |u| \geq {1\over2} \hbox{ on } \p V .
\end{equation}
We first show that $J'(u)$ is close to $J(u)$ in the $X_{\ln}^*$ topology, provided that the vortices are not too
far from the origin.   


{\begin{lemma}
If  $u$ satisfies the hypotheses of Proposition \ref{P2}, then 
\begin{equation}  \label{diffjacmodjacbound}
\LN J'(u) - J(u) \RN_{X^*_{\ln}} \lesssim  \e  \LC \E_\e(u) + \pi  D^2 \ln R_\alpha + \Sigma +C +{n^2} \RC
\le \e \calP .
\end{equation}
\label{L.JJprime}\end{lemma}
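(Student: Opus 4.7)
The plan is to exploit the identity $j(u') = g(|u|)^2 \, j(u)$, which follows from $(iu,u)=0$ and a direct computation using $u' = g(|u|)u$. Since $J = \tfrac12 \nabla\times j$, this gives
\[
J(u') - J(u) \ = \ \tfrac12\, \nabla \times \bigl[(g(|u|)^2 - 1)\, j(u)\bigr].
\]
For any test function $\phi \in C^1_c(\R^2)$ with $\|\phi\|_{X_{\ln}} \leq 1$, integrating by parts (valid since $\phi$ is compactly supported) yields $\int \phi[J(u') - J(u)]\,dx = -\tfrac12\int \nabla\phi \times (g(|u|)^2 - 1) j(u)\,dx$. A case distinction on whether $|u|\ge \tfrac 12$ or $|u| < \tfrac 12$ gives the pointwise bound $|g(|u|)^2-1|\,|u| \lesssim |1-|u|^2|$; combined with $|j(u)|\le |u||\nabla u|$ this reduces the task to estimating $\int |\nabla\phi|\,|1-|u|^2|\,|\nabla u|\,dx$.

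I would then split the integral on $B_{R_\alpha}$ and on its complement, using the definition of $\|\phi\|_{X_{\ln}}$, which forces $|\nabla\phi|\le 1$ on $B_{R_\alpha}$ and $|\nabla\phi(x)|\le (1+\ln|x|)^{-1}$ outside. On $B_{R_\alpha}$, Cauchy--Schwarz together with $\||u|^2-1\|_{L^2(B_{R_\alpha})}^2 \le 4\e^2\int_{B_{R_\alpha}} e_\e(u)$ and $\|\nabla u\|_{L^2(B_{R_\alpha})}^2 \leq 2\int_{B_{R_\alpha}} e_\e(u)$, combined with the upper energy bound \eqref{energyupperboundBigBall} furnished by Proposition~\ref{Propannularlower}, produces a contribution of size $\lesssim \e\bigl(\E_\e(u) + \pi D^2 \ln R_\alpha + CD^4\e/R_\alpha\bigr)$. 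Off $B_{R_\alpha}$ I would apply the weighted Cauchy--Schwarz
\[
\int_{\R^2 \setminus B_{R_\alpha}} \frac{|1-|u|^2|\,|\nabla u|}{1+\ln|x|}\,dx \ \le \ \||u|^2-1\|_{L^2(\R^2)}\,\Bigl(\int_{\R^2\setminus B_{R_\alpha}} \frac{|\nabla u|^2}{(1+\ln|x|)^2}\,dx\Bigr)^{1/2},
\]
bounding the first factor by $\e\sqrt{\calP}$ via \eqref{moduL4L2}. Invocation of \eqref{moduL4L2} is justified because hypothesis \eqref{p2.h1} of our lemma implies the hypothesis \eqref{gstab.h1} of Proposition~\ref{Propgammastability}, after possibly enlarging $s_\e$ to meet the lower threshold in \eqref{gstab.h1}. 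For the second factor I would decompose $\R^2\setminus B_{R_\alpha}$ into the dyadic annuli $A_k = B_{2^{k+1}R_\alpha}\setminus B_{2^k R_\alpha}$, combining \eqref{energyupperboundBigBall} with the annular lower bound \eqref{annulus.lbd} to get $\int_{A_k} e_\e(u) \le M := \E_\e(u) + \pi D^2 \ln(2R_\alpha) + C$ uniformly in $k$, and then summing against $(1+k\ln 2)^{-2}$ to conclude that the second factor is $\lesssim \sqrt M$.

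The principal obstacle is precisely the estimate on $\R^2\setminus B_{R_\alpha}$: because $u \in [U^D]+H^1$, the gradient $\nabla u$ decays only like $1/|x|$ at infinity and $|\nabla u|^2$ fails to be globally integrable. The logarithmic weight in $\|\cdot\|_{X_{\ln}^*}$ is exactly what is needed to tame this divergence, and the uniform-in-$k$ annular bound from Proposition~\ref{Propannularlower} is what makes the dyadic sum converge --- this is why the norm $X_{\ln}^*$ is indispensable here and a plain $\dot W^{-1,1}$ norm would not do. Assembling the inside and outside contributions, applying $\sqrt{ab}\le\tfrac12(a+b)$ to linearize the remaining product of square roots, and recalling $D^2 \le n^2$ produces the advertised bound $\lesssim \e(\E_\e(u)+\pi D^2\ln R_\alpha + \Sigma + C + n^2)$, which is in turn dominated by $\e\calP$.
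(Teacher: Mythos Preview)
Your proof is correct, and on $B_{R_\alpha}$ it coincides with the paper's argument. Outside $B_{R_\alpha}$, however, you take a genuinely different route. The paper decomposes $j(u)/|u| = \bigl(j(u)/|u| - j(u_*)\bigr) + j(u_*)$ and handles the two pieces separately: the first via the $\Gamma$-stability estimate \eqref{gstab.c1}, which controls $\|\,j(u)/|u| - j(u_*)\,\|_{L^2(\R^2_{\sigma_\star})}$ by $(\Sigma+C)^{1/2}$, and the second via the explicit decay $|j(u_*)(x)|\lesssim n/|x|$ paired with $|\nabla\phi|\le 1/\ln|x|$, yielding a convergent radial integral $\int_{R_\alpha}^\infty (r(\ln r)^2)^{-1}\,dr$. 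This is where the paper's $n^2$ term arises, through AM--GM on $n(\Sigma+C)^{1/2}$. Your approach instead avoids $u_*$ altogether in the far field: you apply a weighted Cauchy--Schwarz, bound $\|1-|u|^2\|_{L^2}$ globally by \eqref{moduL4L2}, and control the weighted integral $\int_{\R^2\setminus B_{R_\alpha}} |\nabla u|^2/(1+\ln|x|)^2$ by a dyadic decomposition, using the uniform-in-$k$ annular energy bound $\int_{A_k} e_\e(u)\le \E_\e(u)+\pi D^2\ln(2R_\alpha)+C$ that follows from combining \eqref{energyupperboundBigBall} with \eqref{annulus.lbd}. Your argument is slightly more self-contained (it only needs Proposition~\ref{Propannularlower} and the single consequence \eqref{moduL4L2} of Proposition~\ref{Propgammastability}), and in fact yields a marginally cleaner bound without the stand-alone $n^2$; the paper's approach, on the other hand, makes the dependence on $\Sigma$ more transparent by invoking \eqref{gstab.c1} directly.
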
}

\begin{proof}
{For any smooth compactly supported $\phi$ such that $ \| \phi\|_{X_{\ln}^*}
\le 1$, }
\begin{equation} \begin{split}
\left|\int\phi (J(u) - J'(u)) \ dx \right| & =  \left|\int (\nabla\times \phi)\cdot (j(u) - j(u')) \ dx \right| \\
&{\le 
C  \int_{\R^2}\LV \nabla \phi   \RV \LV  {j(u)}  \RV \LV g^2(|u|) - 1 \RV \ dx}\\
& \le 
C  \int_{\R^2}\LV \nabla \phi   \RV \LV  {j(u) \over |u|}  \RV \LV |u|^2 - 1 \RV \ dx,
\end{split} \label{modJac1}\end{equation}
{since $| g^2(s)-1| \le \frac C s |s-1| \le  \frac C s |s^2-1|$ for all $s\ge 0$.
Because $\|\nabla\phi\|_\infty \le \| \phi\|_{X_{\ln}^*}$, it follows that }
\begin{align*}
\LV \int_{\R^2} \phi \LC J(u) - J(u') \RC \ dx\RV
&\overset{\eqref{modJac1}} \leq C \e \int_{B_{R_\alpha}} e_\e( u ) \ dx
\ + \  C  \int_{\R^2\backslash B_{R_\alpha}} \LV \nabla \phi   \RV  { \LV j(u) \RV \over |u|}  \LV |u|^2 - 1 \RV
\ dx \\
& \leq C \e \int_{B_{R_\alpha}} e_\e( u ) \ dx 
\  + \  C  \int_{\R^2\backslash B_{R_\alpha}}   \LV {  j(u)  \over |u|} - j(u_*) \RV  \LV |u|^2 - 1 \RV \ dx \\
& \qquad\qquad + C \int_{\R^2\backslash B_{R_\alpha}} \LV \nabla \phi   \RV  \LV j(u_*) \RV  \LV |u|^2 - 1 \RV \ dx.
\end{align*}
By \eqref{energyupperboundBigBall}, 
\begin{equation}  \label{Jaccomparelocalenergy}
\e \int_{B_{R_\alpha}} e_\e( u )  \ dx
\lesssim \e \LC \E_\e(u) + \pi D^2 \ln R_\alpha  \RC.
\end{equation}

Next, 
\begin{align}
\label{Jaccomparefarenergy1}
\int_{\R^2 \backslash B_{R_\alpha}}   \LV {  j(u)  \over |u|} - j(u_*) \RV  \LV |u|^2 - 1 \RV \ dx
& \lesssim {\e }  \int_{\R^2 \backslash B_{R_\alpha}} e_\e(|u|) 
+ \LV {j(u) \over |u|} - j(u_*) \RV^2 \ dx \\
\nonumber
& \stackrel{\eqref{gstab.c1}}{\lesssim} { \e } \LB \Sigma(u;\alpha,d) + C \RB.
\end{align}
Furthermore, by \eqref{Ggradientestimateerror} we have $| j(u_*)(x) | \lesssim
{ n \over |x|}$ for all $|x| \geq  R_\alpha$, thus
\begin{align}\label{Jaccomparefarenergy2}
\int_{\R^2 \backslash B_{R_\alpha}} \LV \nabla \phi   \RV  \LV j(u_*) \RV  \LV |u|^2 - 1 \RV \ dx
& \lesssim \int_{\R^2 \backslash B_{R_\alpha}} {1 \over \ln |x|}  {n \over |x|}    \LV |u|^2 - 1 \RV \ dx \\
\nonumber
& \lesssim \e n  \LC \int _{R_\alpha}^\infty { dr \over (\ln r)^2 r} \RC^{1\over2 } \LC \int_{\R^2 \backslash B_{R_\alpha}} e_\e(|u|) \ dx\RC^{1\over 2} \\
\nonumber
&{ \lesssim    {\e n \over \sqrt{\ln R_\alpha} }\LB \Sigma(u;\alpha,d) + C \RB^{1/2}.
}\end{align}
Combining \eqref{Jaccomparelocalenergy}-\eqref{Jaccomparefarenergy2} yields
 \eqref{diffjacmodjacbound}. 
\end{proof}

The strategy of proving the Proposition \ref{P2} is to combine a precise localization of a single vortex, Lemma \ref{localizationsinglevortexlemma} below, with a global Jacobian bound away from the vortices in Lemma \ref{farfieldJacobiancontrol}.

\begin{lemma}[\cite{JSp}, Theorem 1.2']  \label{localizationsinglevortexlemma}
There exists an absolute constant $C$ such that {the following holds}
for any $0 < \e \leq \tau$ and any $u \in H^1(B_{\tau}; \C)$ satisfying
\[
\LN J(u) - \pi d \delta_0 \RN_{\dot{W}^{-1,1}(B_{\tau})} \leq {\tau \over 4} \hbox{ with } d = \pm 1 .
\]
If we write $\Sigma_{B_{\tau}} = \int_{B_{\tau}} e_\e(u) - \LC \pi \ln {{\tau} \over \e} + \gamma \RC$ and 
\begin{equation}
\ell_\e(B_\tau) = \ell_\e := \e C \LC C + \Sigma_{B_{\tau}} \RC e^{\Sigma_{B_{\tau}}},
\label{elleps.def1}\end{equation}
then there exists a point $\xi \in B_{{\tau}/2}$ such that for any $\sigma < {\tau} - \ell_\e$,
\begin{equation}
\LV \{ s \in [\sigma, {\tau}] \hbox{ such that } u \hbox{ satisfies } \eqref{Lipstarestimatesinglevortex} 
 \hbox{ on } B_s \} \RV \geq \tau- \sigma - \ell_\e
\end{equation}
where 
\begin{equation} \label{Lipstarestimatesinglevortex}
|u| > \frac 12 \mbox{ on }\partial B_s(\xi), \qquad \mbox{ and }\quad
\LN J'(u) - \pi d \delta_\xi \RN_{Lip^*(B_s)} \leq \ell_\e \LC C + \Sigma_{B_{\tau}}\RC.
\end{equation}
\end{lemma}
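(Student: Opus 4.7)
The plan is to deduce the statement from the vortex-ball machinery of Jerrard/Sandier combined with a careful bookkeeping of excess energy. Since $J'(u)$ is supported in $S := \{|u| \le 1/2\}$, the task is essentially to show that $S$ fits inside a ball of radius $\lesssim \ell_\e$ around a suitable point $\xi$, and that the degree of $u$ around $\xi$ is $d$.

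First I would apply the vortex-ball construction to produce a disjoint collection of balls $\{B_{r_i}(z_i)\}$ covering $S$, with prescribed total radius $r = \sum r_i$ and the quantized lower bound
\[
\int_{\cup B_{r_i}(z_i)} e_\e(u)\, dx \ \ge\  \pi \Big(\sum_i |d_i|\Big) \ln \frac{r}{\e}\  -\  C,
\qquad d_i := \deg(u/|u|, \partial B_{r_i}(z_i)).
\]
Combining the hypothesis $\|J(u) - \pi d\delta_0\|_{\dot W^{-1,1}(B_\tau)} \le \tau/4$ with the Jacobian structure (as in Lemma \ref{Singlevortexenergylemma}) shows that, after cancellations, $\sum d_i = d$ and in particular $\sum |d_i| \ge 1$.

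Second, I would localize the cluster. Pair the vortex-ball lower bound above with an annular lower bound of the form $\int_{B_{\tau}\setminus B_{2r}(z_1)} e_\e(u)\, dx \ge \pi\ln(\tau/2r) - C$ on a good ``enveloping'' annulus (produced by a coarea argument on $|u|$ to avoid the bad set, and using that the balls are contained in a cluster of radius $\le r$). The sum gives
\[
\pi \ln \frac{\tau}{\e} + \gamma + \Sigma_{B_{\tau}} \ \ge \ \int_{B_\tau} e_\e(u)\, dx \ \ge \ \pi \ln \frac{\tau}{\e} - C + \pi(\sum |d_i|-1)\ln\frac{r}{\e},
\]
so whenever $\sum |d_i| \ge 2$ we get $r \lesssim \e\, e^{\Sigma_{B_\tau}}$, and in any case the residual bad balls all collapse into a single cluster of diameter $\lesssim \e(C+\Sigma_{B_\tau})e^{\Sigma_{B_\tau}} = \ell_\e$. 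Take $\xi$ to be the center of the enclosing ball.

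Third, for the Lip$^*$ estimate, note that $\int J'(u) = \pi d$ on any $B_s(\xi)$ on whose boundary $|u| \ge 1/2$, by \eqref{quantizationmodifiedjac}. For a test function $\phi$ with $\mathrm{Lip}(\phi) \le 1$ supported in $B_s(\xi)$,
\[
\int \phi\, [J'(u) - \pi d\delta_\xi]\, dx \ = \ \int \bigl(\phi - \phi(\xi)\bigr) J'(u)\, dx,
\]
so because $\mathrm{supp}(J'(u)) \subset S$ lies in a ball of radius $\ell_\e$ around $\xi$,
\[
\Big|\int \phi[J'(u) - \pi d\delta_\xi]\Big| \ \le \ \ell_\e \int |J'(u)|\, dx \ \lesssim \ \ell_\e (C + \Sigma_{B_\tau}),
\]
where the $L^1$ bound on $J'(u)$ follows from $|J'(u)| \lesssim e_\e(u)$ on $S$ together with Lemma \ref{Singlevortexenergylemma}. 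Finally, to produce the large set of good radii $s \in [\sigma,\tau]$, apply coarea to $|u|$: the set of $s$ where $\partial B_s(\xi)$ meets $S$ has measure $\le \mathcal{H}^1(S \cap [\sigma,\tau] \cdot \text{radial})/\text{const} \le \ell_\e$, leaving at least $\tau - \sigma - \ell_\e$ admissible radii.

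The main obstacle is Step 2 — extracting the exponential factor $e^{\Sigma_{B_\tau}}$ with the correct constants. This requires iterating the logarithmic annular lower bound across many dyadic scales while carefully tracking how much of the excess $\Sigma_{B_\tau}$ is ``spent'' at each scale, rather than simply applying a one-shot Jerrard-Sandier bound; the rest of the argument is essentially bookkeeping on top of this quantitative concentration estimate.
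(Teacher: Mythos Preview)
The paper does not prove this lemma; it is quoted verbatim from \cite{JSp}, Theorem~1.2$'$, with only a footnote indicating the structure of the original argument: first one shows (via \cite[Proposition~4.2]{JSp}) that the set of radii $s$ with $\inf_{\partial B_s}|u|\le\tfrac12$ has measure at most $\ell_\e$, and then one proves the $Lip^*$ bound separately for every good radius. So there is no in-paper proof to compare against, only that two-step outline.

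Your sketch follows the same broad outline, but Step~2 contains a real gap. Your annular lower bound $\int_{B_\tau\setminus B_{2r}(z_1)} e_\e(u)\ge \pi\ln(\tau/2r)-C$ presupposes that \emph{all} the vortex balls are clustered inside $B_{2r}(z_1)$, which is exactly what you are trying to establish. When $\sum|d_i|=1$ (the generic case), your displayed inequality gives no constraint on $r$ at all, and nothing you have written forces degree-zero balls --- which can carry nonzero $J'(u)$ --- to lie near the degree-$d$ ball. The actual mechanism in \cite{JSp} that produces the exponential factor $e^{\Sigma}$ is an iterated ball-growth/merging argument in which each merging event costs a definite amount of energy; this is what your last paragraph alludes to, but it is not what your Step~2 does. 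Likewise in Step~4, invoking ``coarea on $|u|$'' does not bound the set of radii $s$ for which $\partial B_s(\xi)$ meets $S$; once you know $S\subset B_{\ell_\e}(\xi)$ that bound is immediate, but you have not established that containment.
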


The conclusion $|u| > \frac 12$ on $\partial B_s$ does not appear in
the statement of Theorem 1.2' 
in \cite{JSp}, but is established in the proof \footnote{The proof of  \cite[Theorem 1.2']{JSp} uses 
conclusions (4.7) and (4.9) of
\cite[Proposition 4.2]{JSp}  to show that the set 
$\{ s \in [0, {\tau}]  : \inf_{\partial B_s} |u| \le \frac 12\}$
has measure at most  $ \ell_\e$,
and then proves that $\LN J'(u) - \pi d \delta_\xi \RN_{Lip^*(B_s)} \leq \ell_\e \LC C + \Sigma_{B_\tau}\RC$
for every $s$ such that $\inf_{\partial B_s}|u|> \frac 12$.}.

Here and below, we use the notation
\[
\| \mu \|_{Lip^*(\Omega)} := \sup \{ \int_\Omega \phi \ d\mu : \| \nabla \phi \|_\infty \le 1\}.
\]
Thus, in computing the $Lip^*$ norm, we allow test functions that do not vanish on 
$\partial \Omega$.

Our next result, also taken from \cite{JSp}, will be applied to the function
$w = u/u_*$.


\begin{lemma}\label{farfieldJacobiancontrol}
There exists an absolute constant $C$ such that for any $w \in H^1(\R^2 ; \C)$ and any $\e \in (0,1]$,
if
\[
\int_{\R^2_\sigma} e_\e(w) \ dx \le \ \frac \pi 2 \ln \frac 1\e
\]
and if 
\begin{equation} \label{cutoutest2}
|w| > {1\over2} \hbox{ on } \cup \p B_\sigma(\alpha_j) = \partial \R^2_\sigma
\end{equation}
then for any $\phi \in C^1_c(\R^2)$,
 we have the inequality
\begin{equation} \label{cutoutest1}
\left| \int_{\R^2_\sigma} \phi J'(w) \ dx \right|
\  
\leq  \ \e C \| \nabla \phi \|_{L^\infty}  \LC \int_{\R^2_\sigma} e_\e(w)  \ dx \RC \exp \LB {1\over \pi  } \int_{\R^2_\sigma} e_\e(w) \ dx \RB.
\end{equation}
\end{lemma}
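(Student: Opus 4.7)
The strategy exploits the key structural property that $J'(w) = J(w')$ is supported in $\{|w|\le 1/2\}$: indeed, on $\{|w|\ge 1/2\}$ we have $g(|w|)|w|=1$, so $w'$ is $S^1$-valued and $J(w')\equiv 0$ there. First, I would apply the coarea formula to $|w|$ on the annular region $\{1/2 < |w| < 1\}\cap \R^2_\sigma$. Since the potential term of $e_\e$ gives $|\{|w|\le 1\}|\lesssim \e^2 E$, where $E := \int_{\R^2_\sigma} e_\e(w)\,dx$, a pigeonhole argument yields a regular value $t_*\in (1/2,\,\min_{\p \R^2_\sigma}|w|)$ such that $\Gamma_* := \{|w|=t_*\}\cap \R^2_\sigma$ is a smooth $1$-manifold satisfying
\begin{equation*}
\calH^1(\Gamma_*) \ \lesssim \ |\{|w|\le 1\}|^{1/2}\,\LN \nabla w\RN_{L^2} \ \lesssim \ \e E.
\end{equation*}
The open set $V := \{|w|<t_*\}\cap \R^2_\sigma$ then has smooth boundary $\p V\subset \Gamma_*$ disjoint from $\p \R^2_\sigma$, contains $\mathrm{supp}(J'(w))$, and satisfies $|w'|\equiv 1$ on $\p V$.

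Second, using the identity $J(w')\,dx = d(w'_1\,dw'_2)$, Stokes' theorem on $V$ yields
\begin{equation*}
\int_{\R^2_\sigma}\phi\, J'(w)\,dx \ = \ \int_{\p V}\phi\, w'_1\, dw'_2 \ - \ \int_V d\phi\wedge (w'_1\,dw'_2).
\end{equation*}
The interior term is bounded by Cauchy--Schwarz, using $|w'|\le 1$, $|\nabla w'|\lesssim |\nabla w|$, and $|V|\lesssim \e^2 E$, to yield a contribution $\lesssim \e E\,\LN \nabla\phi\RN_{L^\infty}$. On each connected component $\gamma_k$ of $\p V$, since $|w'|=1$ we may split $w'_1\, dw'_2 = \tfrac{1}{2}\,d\theta_k + \tfrac{1}{2}\,d(w'_1 w'_2)$ where $\theta_k$ is a local phase; the exact part $d(w'_1 w'_2)$ integrates to zero along the closed loop against $\phi$ up to an $O(\LN\nabla\phi\RN_{L^\infty}\calH^1(\gamma_k))$ error, while the $d\theta_k$ piece contributes $\pi\, d_k\,\overline\phi_{\gamma_k}$, where $d_k := \deg(w',\gamma_k)\in \Z$ and $\overline\phi_{\gamma_k}$ is the average of $\phi$ along $\gamma_k$.

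The main obstacle is to control $\sum_k |d_k|$ and absorb it into the exponential factor $e^{E/\pi}$. Here the energy hypothesis $E\le \tfrac{\pi}{2}|\ln \e|$ is crucial: by the single-vortex lower bound of Lemma~\ref{Singlevortexdecaylemma}, any nonzero $d_k$ forces the enclosed component of $V$ to contain at least $\pi|d_k|\ln(r_k/\e) - C$ worth of energy at some characteristic core scale $r_k$, which limits the number and degree of vortices that the available energy $E$ can support. I would convert this linear budget into the required geometric bound $\sum_k|d_k|\lesssim (C+E)\,e^{E/\pi}$ via a dyadic iteration of the coarea/Stokes argument across the logarithmic scales between $\e$ and $1$, with the factor $1/\pi$ in the exponent tracing back to the $\pi$ in Lemma~\ref{Singlevortexdecaylemma}. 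The delicate step is ensuring good level sets persist at every dyadic scale simultaneously, so that the iteration does not degrade the $\e$ factor on the right-hand side.
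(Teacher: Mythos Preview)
The paper does not prove this lemma; it simply cites it as the special case $n=0$, $M=\pi/2$, $\Gamma=\emptyset$ of Theorem~1.1' in \cite{JSp}, noting that the proof there actually shows that condition \eqref{cutoutest2} implies \eqref{cutoutest1}. So the ``paper's proof'' is a black-box citation, whereas you attempt a direct argument.

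Your opening moves are sound: a coarea/pigeonhole selection of a good level $t_*\in(1/2,1)$ yielding $\calH^1(\Gamma_*)\lesssim \e E$, followed by Stokes on $V=\{|w|<t_*\}$, is a standard and correct reduction (modulo the slip that $|\{|w|\le 1\}|$ is not controlled by the potential; you need a strict sublevel set $\{|w|\le t_0<1\}$). The interior term and the exact-form boundary term are indeed $O(\e E\,\|\nabla\phi\|_\infty)$.

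The genuine gap is the degree term $\sum_k \pi d_k\,\overline{\phi}_{\gamma_k}$. Your appeal to Lemma~\ref{Singlevortexdecaylemma} is misplaced: that lemma describes the asymptotics of the \emph{minimal} energy for degree-one boundary data on a ball, and says nothing about lower bounds for $w$ on the irregular components of $V$ with arbitrary degree. What is actually needed is a ball-growth lower bound in the spirit of \cite{JerrardSIMA, Sandier}, and converting that into the precise factor $\e E\,e^{E/\pi}$ is exactly the technical content of Theorem~1.1' in \cite{JSp}; it is not a short dyadic iteration but a carefully organized growing-balls argument with tracking of radii and degrees across scales. You have correctly located the crux (``the main obstacle\ldots the delicate step'') but not supplied an argument for it; the remaining two paragraphs of your proposal are a description of what must be done, not a proof. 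In particular, the assertion $\sum_k|d_k|\lesssim (C+E)e^{E/\pi}$ is neither stated nor proved anywhere in the paper's toolkit and would itself require the full machinery of \cite{JSp}.
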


The lemma is a special case\footnote{obtained by taking $n=0, M=\pi/2$
and $\Gamma = \emptyset$
in  Theorem 1.1', using notation from \cite{JSp}.
The conclusion as stated in \cite{JSp}  is somewhat more complicated, asserting that there exists 
a large set of radii $s$ satisfying both \eqref{cutoutest2} and \eqref{cutoutest1},
but the proof in fact shows that  there is a large set of radii satisfying \eqref{cutoutest2}, and that \eqref{cutoutest2} implies \eqref{cutoutest1}.} of  Theorem 1.1' in \cite{JSp}.
The hypothesis of finite energy is shown in \cite{JSp} to eliminate any possible difficulties
arising from the unboundedness of the domain.

\begin{proof}[ Proof of Proposition~\ref{P2}]
We  follow the argument of the proof of Theorem 3 in \cite{JSp2} with the adjustments due
 to the unbounded domain.  

{Observe  that for any specific term $\calP$ of the form \eqref{P.def}, we may assume that 
\begin{equation}
 \e \calP\  \le \   1
\label{simp}\end{equation}
as otherwise \eqref{p2.c1} follows immediately from
\eqref{p2.h1}, with $\xi_i=\alpha_i$ for all  $i$ (after possibly increasing
the choice of $\calP$ in \eqref{p2.c1}). Similarly, we may assume that
\begin{equation}
 \e \frac{\sqrt{\ln(\rho_\alpha/\e)}} {\ln R_\alpha}\ \le \ 
  \Big\| J(u) - \sum_{j=1}^n \pi d_j \delta_{\alpha_j} \big\|_{X^*_{\ln}} 
\label{sep.lowerbd}\end{equation}
as otherwise there is nothing to prove. It follows
from
\eqref{p2.h1} and \eqref{sep.lowerbd} 
that
\begin{equation} \label{restrict.ep.loc} 
\e { n^6  \over \rho_\alpha} \leq {c_3}.
\end{equation} 
and hence that the hypotheses of Proposition \ref{Propannularlower}
are satisfied. It also follows from \eqref{p2.h1} and \eqref{sep.lowerbd} 
that the hypotheses \eqref{gstab.h1} and \eqref{sigmastardefinition}
of Proposition \ref{Propgammastability} hold,
with $ \| J(u) - \sum_{j=1}^n \pi d_j \delta_{\alpha_j}\|_{X^*_{\ln}} $ playing the role of $s_\e$. The verification of
\eqref{sigmastardefinition} uses \eqref{simp}. }

Based on \eqref{p2.h1} we define a family of radii
\begin{equation} \label{sigmachoices}
\sigma \in \LB {3 \over 4} \sigma_1, \sigma_1 \RB \hbox{ with } \sigma_1 = { {\rho_\alpha \over 40 n^4 }}.
\end{equation}
Note that $\sigma^{-1} \le \calP$.  
{Again owing to \eqref{p2.h1}, any $\sigma$ satisfying \eqref{sigmachoices} must also satisfy 
conditions \eqref{sig.constraints} (again with $s_\e = \| J(u) - \sum_{j=1}^n \pi d_j \delta_{\alpha_j}\|_{X^*_{\ln}} $)
from the proof of Theorem \ref{Propgammastability}. Thus
all estimates concerning balls $B_\sigma$
and their complements from the proof of that result are valid here.
}

We now begin to localize the vortex found inside of each vortex ball $B_\sigma (\alpha_j)$.  

1.  Single vortex localization.  First by \eqref{convertnorm}, \eqref{p2.h1}, and \eqref{sigmachoices},
{ \begin{align*}
\LN J(u) - \pi d_j \delta_{\alpha_j} \RN_{\dot{W}^{-1,1}(B_\sigma)} 
& \leq 
\LC 1 + (\ln (|\alpha_j| + \sigma))^+\RC \| J(u) - \sum_{j=1}^n \pi d_j \delta_{\alpha_j} \|_{X^*_{\ln}}\\
&  \leq 
2\ln R_\alpha {  c_3 \rho_\alpha \over n^6 \ln R_\alpha } \leq { \sigma \over 4 n^2} \le \frac \sigma 4,
\end{align*}
if $c_3$ is chosen small enough.
Thus by Lemma \ref{Singlevortexenergylemma} we have
\begin{equation}  \label{Sigmajlowerbound}
\Sigma_{B_\sigma(\alpha_j)} \geq - C{ \e \over \sigma} \sqrt{ \ln { \sigma \over \e}} - {C \over \sigma} \LN J(u) - \pi d_j \delta_{\alpha_j} \RN_{\dot{W}^{-1,1}(B_{\sigma})} \geq  - {C \over n},
\end{equation}
using the fact that $\frac \e \sigma \le Cn^{-2}$, which follows from  \eqref{sigmachoices} and \eqref{restrict.ep.loc}.}

We begin to bound the excess energy in small balls in order to provide a good localization for each single vortex ball $B_r (\alpha_j)$.  In particular, given the $\sigma$ from \eqref{sigmachoices} and 
using arguments from the proof of Proposition~\ref{Propgammastability}, we obtain
\begin{align}
\Sigma := \Sigma(u; \alpha, d) & 
\stackrel{\eqref{EEEa}}{\geq} \int_{\R^2_\sigma } [e_\e(u) - e_\e(u_*)] \ dx+ \sum_{j=1}^n \Sigma_{B_\sigma(\alpha_j)} - C {n^3 \sigma^2\over \rho_\alpha^2 }.
\end{align}
Next, {\eqref{fixedradiusgammastability} (with  $s_\e =  \| J(u) - \sum_{j=1}^n \pi d_j \delta_{\alpha_j}\|_{X^*_{\ln}} $)
and \eqref{p2.h1}}
imply that
\begin{align}
\int_{\R^2_\sigma } e_\e(u) - e_\e(u_*) \ dx
& 
\geq  {1\over 2} \int_{\R^2_\sigma} e_\e(|u|) + {1\over4} \LV {j(u) \over |u|} - j(u_*) \RV^2 \ dx   + \nonumber \\
& \quad - { C n \over \sigma} \LC {c_3\rho_\alpha \over  n^6 \ln R_\alpha} \ln R_\alpha + \e \calP\RC
  - C{n^4 \sigma \over \rho_\alpha } - {\e \calP} . 
  \nonumber 
\end{align}
From the last two inequalities, recalling \eqref{simp} and the choice \eqref{sigmachoices} of $\sigma$, we
obtain
\begin{align}
\Sigma  &  
\ge  {1\over 2} \int_{\R^2_\sigma} e_\e(|u|) + {1\over4} \LV {j(u) \over |u|} - j(u_*) \RV^2 \ dx   + \sum_{j=1}^n \Sigma_{B_\sigma(\alpha_j)} - C , \nonumber
\end{align}
and we infer that
\begin{equation} \label{excesseenergyboundsinglevortex}
\Sigma_{B_\sigma(\alpha_j)} \leq \Sigma + C \hbox{ for } j = \{1, \ldots, n\}
\end{equation} 
and also, using  \eqref{Sigmajlowerbound}, that
\begin{equation} \label{singlevortexenergycontrolbyglobalenergy}
{1\over2} \int_{\R^2_\sigma} e_\e(|u|) + {1\over4} \LV {j(u) \over |u|} - j(u_*) \RV^2 \ dx \leq \Sigma + C.
\end{equation}
It follows from \eqref{excesseenergyboundsinglevortex}
that
\begin{equation}\label{definitionellepsilon}
\ell_\e(B_\sigma(\alpha_i))  \ \le  \  \e C \LC C + \Sigma  \RC e^{(C+\Sigma )} := \ell_\e  \qquad
\mbox{ for all $i$ and all  } \sigma\in [\frac 34 \sigma_1, \sigma_1]
\end{equation}
using notation from \eqref{elleps.def1}. (The constants in the definition
of $\ell_\e$ will be adjusted later to satisfy some additional estimates.)
Then Lemma \ref{localizationsinglevortexlemma} (with $\tau = \sigma_1$ and $\sigma = \frac 34 \sigma_1$) and  \eqref{excesseenergyboundsinglevortex}
imply that there exists $\xi_i \in B_{\sigma_1 / 2} (\alpha_i)$ such that 
\[
\LV \{ \sigma \in [ {3\over 4} \sigma_1, \sigma_1] \hbox{ such that } u \hbox{ does not satisfy } \eqref{Lipstarestimatesinglevortex2}
\} \RV \leq \ell_\e
\]
where
\begin{equation} \label{Lipstarestimatesinglevortex2}
|u|> \frac 12\mbox{ on }\partial B_\sigma(\alpha_i), \qquad
\LN J'(u) - \pi d \delta_{\xi_j} \RN_{Lip^*(B_\sigma(\alpha_i))} \leq \ell_\e \LC C + \Sigma \RC.
\end{equation}
We may assume that
 $\ell_\e \le \frac {\sigma_1}{4n}$, as otherwise \eqref{p2.c1} follows from
 \eqref{p2.h1} and \eqref{sigmachoices}, and it follows that we can fix some $\sigma\in [\frac 34 \sigma_1, \sigma_1]$ such that \eqref{Lipstarestimatesinglevortex2}
holds for every  $i$. We henceforth assume that this has been done.

2.  We now look to prove a Jacobian estimate in $\R^2_\sigma(\alpha)$ by {bounding the energy}  of $u/u_*$.  Let $w = u/u_*(\alpha, d)$.  
Our starting point is the identity
\begin{align*}
 e_\e(w) 
& = e_\e(|u|)   + {1\over2} \LV { j(u) \over |u| } - j(u_{ *}) \RV^2  \\
& \quad +  { j(u) \over|u|} \cdot j(u_*) \LC 1 -  |u|  \RC + {1\over2} \LV j(u_*) \RV^2 \LC  |u|^2 - 1  \RC
\end{align*}
from \cite{JSp2}, which is not hard to verify.
Using the explicit form of $j(u_*)$, 
we estimate
\[
\left|
\int_{\R^2_\sigma}  \LV j(u_*) \RV^2 \LC  |u|^2 - 1  \RC \ dx
\right|\le \e \left( \int_{\R^2_\sigma}e_\e(|u|) \ dx \ \int_{\R^2_\sigma} |j(u_*)|^4 \ dx \right)^{1\over 2} \le \e  C(C+\Sigma)^{1\over 2}\frac{n^2}{\sigma} \le  \e\calP.
\]
Also, it follows from \eqref{A1estimatebound} that
\[
\left |\int_{\R^2_\sigma} \frac{j(u)}{|u|} \cdot j(u_*)(1-|u|) \ dx  \right| \ \le \ 
\frac 18  \int_{\R^2_\sigma} e_\e(|u|)+ \LV {j(u) \over |u|} - j(u_*) \RV^2 dx
+ \e\calP.
\]
{Since $\sigma_\star \le \frac 34 \sigma_1 \le \sigma$ (this is a consequence of
\eqref{p2.h1}, \eqref{sep.lowerbd},  and \eqref{simp})
we see from \eqref{gstab.c1} and \eqref{simp} that
\begin{equation} \label{Globalenergyboundw}
\int_{\R^2 _\sigma(\alpha)} e_\e(w) \ dx
\leq 
\int_{\R^2 _{\sigma_\star}(\alpha)} e_\e(w) \ dx
\le
C(C + \Sigma).
\end{equation}}
It follows from \eqref{Lipstarestimatesinglevortex2} that
\begin{equation}  \label{moduluswbound}
|w| >  {1\over2} \hbox{ on } \p\R^2_\sigma .
\end{equation}
Thus
from \eqref{Globalenergyboundw} and Lemma \ref{farfieldJacobiancontrol},
after adjusting the constants in the definition \eqref{definitionellepsilon},
we can conclude that if $\phi\in C^1_c(\R^2)$  and $\|\phi\|_{X_{\ln}}\le 1$
then (since $\|\phi\|_{Lip} \le \|\phi\|_{X_{\ln}}$)
\begin{equation}  \label{jacobianboundw}
\int_{\R^2_\sigma} \phi J'(w) \ dx
\ \leq C \ell_\e \LC C + \Sigma \RC.
\end{equation}

3.  Next we show that $J'(w)$ is close to $J'(u)$ on $\R^2_\sigma$. It is straightforward to check (or see \cite{JSp2}) that 
\[
J'(u) - J'(w)  = {1\over2} \nabla \times [ (|w'|^2 - 1) j(u_*)]
\]
for $w' = g(|w|) w$, where $g$ was defined in \eqref{auxg.def}.
By construction of $g(s)$, we know that $J'(u)$ and $J'(w)$ vanish on the set $\mathcal{S} = \{ x \in \R^2 :  |u(x)|\ge  {1/2} \}$,
and hence on $\partial \R^2_\sigma$ for $\sigma$ satisfying \eqref{moduluswbound}.  For 
$\phi \in C^1_c(\R^2)$ such that
$\| \phi \|_{X_{\ln}}\le 1$ we then
have 
\begin{align*}
\int_{\R^2_\sigma} \phi \LC J'(u) - J'(w) \RC \ dx 
& = \frac 12 \int_{\R^2_\sigma}  \nabla^\perp \phi \cdot \LB (|w'|^2 - 1 ) j(u_*) \RB \ dx \\
& \leq  C\int_{\R^2_\sigma} \LV \nabla \phi \RV  \LV |w|^2 - 1 \RV \LV j(u_*) \RV \ dx \\
& \leq \e C \LN {1 - |w|^2 \over \e} \RN_{L^2(\R^2_\sigma)}  \LC \LN j(u_*) \RN_{L^2(B_{R_\alpha,\sigma})}
+ \LN {j(u_*) \over \ln |x| } \RN_{L^2(\R^2 \backslash B_{R_\alpha})} \RC \\ 
& \stackrel{\eqref{Ggradientestimateerror1}, \eqref{Globalenergyboundw}}{\leq} C \e {n R_\alpha \over \sigma} \LC C + \Sigma \RC \\
& \stackrel{\eqref{sigmachoices}}{\lesssim} \e {n^5 R_\alpha \ln R_\alpha \over \rho_\alpha} (C + \Sigma).
\end{align*}
For such $\phi$ we combine this with \eqref{jacobianboundw}
and \eqref{Lipstarestimatesinglevortex2} and recall the definition 
\eqref{definitionellepsilon} of $\ell_\e$ to find that
\begin{align*}
&\int_{\R^2} \phi \LC J'(u) - \sum_{j=1}^n \pi d_j \delta_{\xi_j} \RC \ dx \\
&\hspace{4em} = \int_{\R^2_\sigma} \phi  (J'(u) - J'(w)) \ dx + \int_{\R^2_\sigma} \phi  J'(w) \ dx + \sum_{j=1}^n \int_{B_s(\alpha_j)} \phi \LC J'(u) - \pi d_j \delta_{\xi_j} \RC \ dx \\
&\hspace{4em} {\lesssim} \ 
  \e \LB n C (C + \Sigma)^2 e^{\Sigma} 
+C  {n^5 R_\alpha \ln R_\alpha \over \rho_\alpha} (C + \Sigma) \RB.
\end{align*}
Taking the supremum over all such $\phi$, we find that $\LN J'(u) - \sum_{j=1}^n \pi d_j \delta_{\xi_j}  \RN_{X_{\ln}^*} $ is bounded by the
right-hand side.
Finally, using \eqref{diffjacmodjacbound} we have $\LN J(u) - J'(u) \RN_{X_{\ln}^*} \lesssim \e (\E_\e(u) + \pi D^2 \ln R_\alpha)$ and the bound \eqref{p2.c1} follows by the triangle inequality.

\end{proof}


\section{ vortex dynamics in infinite-energy configurations on $\R^2$} \label{sec.gronwall}


In this section we carry out the first part of the proof of \eqref{thmdynamics},
in which we reduce the theorem to controlling the rate of
growth of a scalar quantity that we call $\eta(t)$, defined in \eqref{eta.def}.
In the subsequent two sections we compute and bound
$\frac d{dt}\eta$ and  $\frac d{dt}\LA \eta\RA_\delta$ for a suitable $\delta$,
where 
\begin{equation}
\LA f\RA_\delta(t) = \frac 1\delta \int_{t-\delta}^t f(s)ds.
\label{avg.def}\end{equation}
The proof of the theorem is finally completed in Section \ref{S.conc}
by applying Gr\"onwall's inequality
to $\LA \eta \RA_\delta$ and using a preliminary, weaker estimate
of $\frac d{dt}\eta$ to deduce pointwise bounds on $\eta$.

We recall that under the assumptions of the theorem,
and using conservation of energy for the PDE
\eqref{nls} and the ODE \eqref{PV2},
quantities such as $\E_\e(u(t)), W(a(t)), W_\e(a(t)), R_{a(t)}$,
and  $\rho_{a(t)}^{-1}$ are bounded uniformly, for all $t$,
as discussed in Section \ref{ss.noworries}.

\subsection{True vortex positions at $\{\xi_j\}_{j=1}^n$. }

By conservation of the energy $W(\cdot)$, and hence $W_\e(\cdot)$, for the ODE \eqref{PV2} and the conservation \eqref{consEr} of   the energy renormalized at infinity $\E_\e$
for the Bethuel-Smets solutions of \eqref{nls}, we deduce from
\eqref{t1.h2}  that
\begin{equation}
{
\Sigma(u(t); a(t))  = \Sigma(u(0); a(0))
}
 \le  \e^{1\over 2}
\quad\mbox{ for all $t$}.
\label{cons.Sig}\end{equation}
We define
\begin{align}
\tau_1& := \sup \left\{ \tau>0: \| J(u(t)) - \pi \sum  \delta_{a_i(t)}\|_{X_{\ln}^*} \le
\e^{1/3}  \mbox{ for all }0 \le t \le \tau \right\}.
\label{tau1.def}
\end{align}
Taking $\e_0$ to be suitably small, 
see the discussion in Section \ref{ss.noworries},
it follows that the hypotheses of
Proposition \ref{P2} are satisfied by $u(t), a(t)$ for
all $t\in [0, \tau_1]$.
Therefore, for such $t$ there exist distinct points
$\xi_1(t),\ldots, \xi_n(t)$ in $\R^2$ such  that 
\begin{equation}
\mbox{{$|\xi_i - a_i| \le \frac{ \rho_{a(t) }}{80 n^4 }$} for all $i$, \qquad and 
}\quad  \| J(u (t))- \sum_{i=1}^n\pi  \delta_{\xi_i(t)}\|_{X_{\ln}^*}
\le  \e\calP.
\label{xis.def}\end{equation} 
Since $t\mapsto J(u(t))$ is continuous as a map from $\R$ into $X_{\ln}^*$, see \eqref{XcJ}, we can
choose the points $\xi_i(t)$ to depend continuously, hence measurably, on $t$. (Note that \eqref{xis.def} only constrains
 $\xi_i(t)$ up to length scales of order $\e\calP$.)
 
Note that $|\rho_{\xi(s)}  - \rho_{a(s)}| \le  \frac 1{10}\rho_{a(s)}$ for all $s$,
so that we may freely replace $\rho_{\xi(s)}$ by $\rho_{a(s)}$  in all our estimates 
at the expense of adjusting some constants.

\subsection{Controlling $\eta(t)$ instead of $\sum_{j=1}^n \LV \xi_j(t) - a_j(t) \RV$.}
\label{subsectionetaintro}

We next introduce a quantity $\eta$, and we prove that control over $\eta$
will suffice to establish the main conclusions of the Theorem.
We define
\begin{equation}
\eta (t ):= \sum_{j=1}^n |\eta_j(t)|
 := \sum_{j=1}^n 
\LV \int J(u) \, \Phi_j(x,t) dx \RV 
\label{eta.def}\end{equation}
for
\[
\Phi_j(x,t) := 
\varphi(x-a_j(t), \rho_{a(t)}), \qquad
\mbox{ where }\varphi(x, \rho) := x \,\chi(\frac x \rho)
\]
and  $\chi\in C^\infty_0(\R^2)$ is a fixed function such that
\[
\chi(x) = \left\{ \begin{array}{ll} 
1 & \hbox{for } |x| \leq 1 \\
0 & \hbox{for } |x|\geq 2. \end{array} \right.
\]

Since $\varphi(x,\rho) = \rho\, \varphi(x/\rho, 1)$, it is clear that $\| \nabla \Phi_j(\cdot, t)\|_\infty$ is independent of $t$ and $j$. It follows directly from the definition of $\rho_a$ that
$\left\{ \operatorname{supp} \Phi_j(\cdot ,t) \right\}$ are pairwise 
disjoint.

\begin{lemma}  If $0\le t \le \tau_1$ then 
\begin{align} \label{eta.rocks1} 
\LV \eta(t) -  \ \pi \sum_i|\xi_i(t)-a_i(t)| \RV & \le \e \calP,
\\
{\eta(t) 
\le \calP  \| J(u(t)) - \pi \sum  \delta_{a_i(t)}\|_{X_{\ln}^*} 
}
&\le  \e^{1/3} \calP .  
\label{eta.bound}\end{align} 
\end{lemma}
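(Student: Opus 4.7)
The plan rests on two observations about the test fields $\Phi_j$. First, $\Phi_j(a_j) = 0$ by construction (since $\varphi(0,\rho) = 0$), and the supports $\{\operatorname{supp}\Phi_j(\cdot,t)\}_{j=1}^n$ are pairwise disjoint with $a_i \notin \operatorname{supp}\Phi_j$ for $i\ne j$ (because $|a_i - a_j| \ge 4\rho_{a(t)}$ while $\operatorname{supp}\Phi_j \subset B_{2\rho_{a(t)}}(a_j)$). Second, $\Phi_j$ is a vector-valued Lipschitz cutoff whose gradient is universally bounded and whose support lies in $B_{R_{a(t)}}$, so applying the $X_{\ln}$ definition componentwise gives
\[
\|\Phi_j(\cdot,t)\|_{X_{\ln}} \le C(1+\ln R_{a(t)}) \le \calP.
\]

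To prove \eqref{eta.bound}, I would use that $\int \Phi_j \cdot \pi\sum_i \delta_{a_i}\,dx = \pi \Phi_j(a_j) = 0$, so
\[
\eta_j(t) = \int \Phi_j\,\big(J(u) - \pi\sum_i \delta_{a_i}\big)\,dx.
\]
By Lemma~\ref{lem.dual} applied to each component of $\Phi_j$,
\[
|\eta_j(t)| \le \|\Phi_j\|_{X_{\ln}} \,\|J(u(t)) - \pi\sum_i \delta_{a_i(t)}\|_{X_{\ln}^*} \le \calP \,\|J(u(t)) - \pi\sum_i \delta_{a_i(t)}\|_{X_{\ln}^*}.
\]
Summing over $j$ (absorbing the factor $n$ into $\calP$) and invoking the definition of $\tau_1$ in \eqref{tau1.def} gives both bounds in \eqref{eta.bound}.

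For \eqref{eta.rocks1}, I would instead compare $J(u)$ with the refined measure $\pi\sum_i \delta_{\xi_i}$ from \eqref{xis.def}. Since $|\xi_j - a_j|\le \rho_{a(t)}/(80 n^4) < \rho_{a(t)}$, we have $\chi((\xi_j-a_j)/\rho_{a(t)}) = 1$, so $\Phi_j(\xi_j,t) = \xi_j - a_j$. For $i\ne j$, the triangle inequality gives $|\xi_i - a_j| \ge 4\rho_{a(t)} - \rho_{a(t)}/(80n^4) > 2\rho_{a(t)}$, so $\xi_i \notin \operatorname{supp}\Phi_j(\cdot,t)$ and $\Phi_j(\xi_i,t) = 0$. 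Hence $\int \Phi_j \cdot \pi\sum_i \delta_{\xi_i}\,dx = \pi(\xi_j - a_j)$, and the same duality argument as above, combined with the bound $\|J(u) - \pi\sum_i\delta_{\xi_i}\|_{X_{\ln}^*} \le \e\calP$ from \eqref{xis.def}, yields
\[
|\eta_j(t) - \pi(\xi_j(t) - a_j(t))| \le \|\Phi_j\|_{X_{\ln}}\cdot \e\calP \le \e\calP.
\]
The reverse triangle inequality then gives $\bigl||\eta_j| - \pi|\xi_j - a_j|\bigr| \le \e\calP$, and summation over $j$ proves \eqref{eta.rocks1}.

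The main (minor) obstacle is bookkeeping: one must verify that $\|\Phi_j\|_{X_{\ln}}$ is controlled in terms of $R_a$ alone (since the $\calP$ convention includes polynomial dependence on $R_a$), and that the duality pairing $\int \Phi_j\, d\mu$ makes sense for a vector-valued $\Phi_j$ paired against a scalar measure $\mu = J(u) - \pi\sum_i \delta_{a_i}$ of finite $X_{\ln}^*$ norm—this is handled componentwise via Lemma~\ref{lem.dual}. Once these points are noted, the lemma is essentially immediate from the construction.
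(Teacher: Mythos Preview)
Your proof is correct and follows essentially the same approach as the paper: subtract the appropriate sum of Dirac masses (at the $a_i$ for \eqref{eta.bound}, at the $\xi_i$ for \eqref{eta.rocks1}), use that $\|\Phi_j\|_{X_{\ln}} \le C\ln R_{a(t)} \le \calP$ to invoke the $X_{\ln}^*$ duality, then sum and apply the reverse triangle inequality. Your version is in fact slightly more explicit than the paper's about why $\Phi_j(\xi_i)=0$ for $i\ne j$ and $\Phi_j(\xi_j)=\xi_j-a_j$; note only that since each $\Phi_j$ is compactly supported you can appeal directly to the definition \eqref{Xstar.def} rather than Lemma~\ref{lem.dual}.
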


\begin{proof} 
The definition of $\Phi_j$ implies that
\[
\eta_j(t) - \pi(\xi_j(t) - a_j(t)) = \int (J(u) - \pi\sum \delta_{\xi_i}) \Phi_j(t,x)\ dx
\]
for every $j$. It is also easy to see that $\| \Phi_j \|_{X_{\ln}^*}\le C \ln R_{a(t)} \le \calP$,
so we deduce that
\begin{equation}
\big| \, |\eta_j(t)| - \pi |\xi_j(t) - a_j(t)|\, \big|  \ \le \  |\eta_j(t) -  \pi (\xi_j(t) - a_j(t))| \ \overset{\eqref{xis.def}}\le \e \calP
\label{futureref}\end{equation}
for every $j$,
and  then it is straightforward to obtain \eqref{eta.rocks1}.
Similarly, 
\[
\eta (t ) = \sum_{j=1}^n 
\LV \int  ( J(u) - \pi \sum \delta_{a_i}) \, \Phi_j(x,t) dx \RV 
{\le \calP  \| J(u(t)) - \pi \sum  \delta_{a_i(t)}\|_{X_{\ln}^* } }
\overset{\eqref{tau1.def}}\le \e^{1/3} \calP .
\]
\end{proof}


\subsection{Approximation of $u$ by canonical harmonic maps $u_*(\xi)$.}

We now state an important lemma that estimates how close $u(t)$ is to the canonical harmonic map $u_*(\xi(t))$ for times $t$ within $[0,\tau_1]$, defined in section~\ref{subsectionetaintro},
for  $\xi_i(t)$ chosen in \eqref{xis.def}.

\begin{lemma}
For all $0 \leq t \leq \tau_1$ we have
\begin{align}
\Sigma(u(t); \xi(t))
& \lesssim  \frac{n}{\rho_{a(t)}} \eta(t) + \e^{1\over 2},  \label{Sigma.est1}  \\
 \int_{\R^2 \setminus \cup B_{\rho_{a(t)}}(a(t))}   e_\e(|u(t)|)  + 
\frac 14\LV { j (u(t)) \over |u(t)|} - j( u_*(\xi(t)) )\RV^2 dx 
&  \lesssim \frac  {n}{\rho_{a(t)}} \eta(t) + \e^{\frac 12}\calP ,  \label{gstab.ref1}\\
\| j(u)(t) - j(u_*(\xi))(t) \|_{L^{4\over3} +L^2(\R^2)}
&  \lesssim  \left( \frac{n  }{\rho_{a(t)}} \eta(t) \right)^{1\over2} + \e^{\frac 14}\calP, 
\label{gstab.ref2}\\
\| 1-|u|\|_{L^4(\R^2)}^4 \le 
\| 1-|u|^2\|_{L^2(\R^2)}^2 &\le
{\e^2\calP.}
\label{gstab.ref3}
\end{align}
\label{uptotau1}\end{lemma}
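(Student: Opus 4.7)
The plan is to first establish the energy gap estimate \eqref{Sigma.est1}, then feed it into Proposition \ref{Propgammastability} applied with vortex locations at $\xi(t)$ rather than $a(t)$ to obtain \eqref{gstab.ref1}--\eqref{gstab.ref3}; the $L^4$ density bound \eqref{gstab.ref3} comes from the same application via \eqref{moduL4L2}.

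For \eqref{Sigma.est1}: conservation of $\E_\e$ under \eqref{nls} (see \eqref{consEr}) and conservation of $W_\e$ under the point vortex ODE \eqref{PV2} give $\Sigma(u(t); a(t)) = \Sigma(u(0); a(0)) \le \e^{1/2}$, as already recorded in \eqref{cons.Sig}. Decomposing
\[
\Sigma(u(t); \xi(t)) \;=\; \Sigma(u(t); a(t)) + \bigl[W_\e(a(t)) - W_\e(\xi(t))\bigr]
\;=\; \Sigma(u(t); a(t)) + \bigl[W(a(t)) - W(\xi(t))\bigr],
\]
I would estimate the last bracket via the gradient bound \eqref{Wderivatives}. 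Since $|\xi_i - a_i| \le \rho_{a(t)}/(80 n^4)$ by \eqref{xis.def}, along the segment from $a(t)$ to $\xi(t)$ the minimum separation stays comparable to $\rho_{a(t)}$, so $|\nabla_i W| \lesssim n/\rho_{a(t)}$ uniformly, and
\[
|W(a) - W(\xi)| \;\lesssim\; \tfrac{n}{\rho_{a(t)}} \sum_i |a_i - \xi_i|.
\]
Combining with \eqref{eta.rocks1}, namely $\pi \sum_i |\xi_i - a_i| \le \eta(t) + \e\calP$, yields $\Sigma(u(t); \xi(t)) \lesssim (n/\rho_{a(t)}) \eta(t) + \e^{1/2} + (n\e/\rho_{a(t)})\calP$. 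The last error term is absorbed into $\e^{1/2}$ using \eqref{rhoRbds} and \eqref{Psmall}, which give $\rho_{a(t)}^{-1} \le \e^{-\beta}$ for arbitrarily small $\beta$.

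For \eqref{gstab.ref1}, \eqref{gstab.ref2}, and \eqref{gstab.ref3}: I would invoke Proposition \ref{Propgammastability} with $\alpha = \xi(t)$, $d \equiv 1$, and $s_\e = \e \calP$, the latter being exactly the bound furnished by \eqref{xis.def}. The hypotheses \eqref{gstab.h1} and \eqref{sigmastardefinition} are verified routinely from Section \ref{ss.noworries}: $\rho_{\xi(t)} \asymp \rho_{a(t)}$, $R_{\xi(t)} \asymp R_{a(t)}$, $s_\e \le \e\calP$, and the lower bound $\rho_{a(t)} \gg \e^\beta$ of \eqref{rhoRbds} make every quantity appearing in these constraints appropriately small for $\e_0$ small. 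The conclusion \eqref{gstab.c1} then gives
\[
\tfrac12 \int_{\R^2_{\sigma_\star}(\xi)} e_\e(|u|) + \tfrac14 \bigl| \tfrac{j(u)}{|u|} - j(u_*(\xi)) \bigr|^2 \, dx \;\le\; \Sigma(u; \xi) + \sqrt{s_\e}\,\calP,
\]
where $\sqrt{s_\e}\calP \lesssim \e^{1/2}\calP$. Because $\sigma_\star \ll \rho_{a(t)}/(80 n^4) \le \rho_{a(t)} - \max_i|\xi_i - a_i|$, one has the containment $\R^2 \setminus \cup_j B_{\rho_{a(t)}}(a_j) \subset \R^2_{\sigma_\star}(\xi)$, so restricting the integral to the smaller set and substituting \eqref{Sigma.est1} yields \eqref{gstab.ref1}. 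Estimate \eqref{gstab.ref2} follows from \eqref{Lpcurrentboundsbigball} via $\sqrt{|\Sigma|} \le (Cn\eta/\rho_{a(t)})^{1/2} + \e^{1/4}$ and $s_\e^{1/4}\calP \le \e^{1/4}\calP$, and \eqref{gstab.ref3} is \eqref{moduL4L2} verbatim.

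The main technical obstacle is not conceptual but bookkeeping: one must verify that all smallness hypotheses of Proposition \ref{Propgammastability} hold uniformly in $t \in [0, \tau_1]$ with $\xi(t)$ in place of $a(t)$. This relies on the fact that the displacement $|\xi_i - a_i|$ is a tiny fraction of $\rho_{a(t)}$, so that geometric quantities ($\rho_\alpha, R_\alpha$) and polynomial bounds $\calP$ are insensitive to whether $\alpha = a(t)$ or $\alpha = \xi(t)$; and on the fact that conservation of $\E_\e$ and $W_\e$ makes $\E_\e(u(t))$, $R_{a(t)}$, $\rho_{a(t)}^{-1}$, and $\Sigma(u(t); a(t))$ all controlled uniformly in $t$ by their initial values as in Section \ref{ss.noworries}.
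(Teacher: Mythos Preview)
Your proposal is correct and follows essentially the same approach as the paper: decompose $\Sigma(u;\xi) = \Sigma(u;a) + W(a) - W(\xi)$, bound the $W$-difference via \eqref{Wderivatives} and \eqref{eta.rocks1}, then apply Proposition \ref{Propgammastability} at the points $\xi(t)$ with $s_\e = \e\calP$ from \eqref{xis.def}. Your explicit mention of the containment $\R^2 \setminus \cup_j B_{\rho_{a(t)}}(a_j) \subset \R^2_{\sigma_\star}(\xi)$ is a detail the paper leaves implicit but is indeed needed for \eqref{gstab.ref1}.
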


\begin{proof}
Fix $t\in [0, \tau_1]$.
{From  the definition of $\Sigma(u; \cdot)$ and \eqref{cons.Sig}
we have that }
\begin{align}
&{
\Sigma (u(t); \xi(t))
= 
\Sigma(u(t); a(t)) + W(a(t)) - W(\xi(t))}
\nonumber \\
&\qquad \le  \e^{1 \over 2} +
\left(\sum_{j=1}^n |\xi_j(t) - a_j(t)| \right)\  
(\sup_j \sup_{ |y-a(t)| \le |\xi(t)-a(t)| }  |\nabla_{y_j} W(y)| ) .
\label{Sigma.est0}\end{align}
From \eqref{xis.def} it follows that 
{if  $y\in (\R^2)^n$ is such that
$|y - a(t)|\le |\xi(t)- a(t)|$, then
$\rho_y \ge \frac 12 {\rho_{a(t)}}$}, so we can use \eqref{Wderivatives} to find that $|\nabla_{y_j} W(y)|\le \frac {Cn}{\rho_a}$.
Hence \eqref{Sigma.est0} and \eqref{eta.rocks1} yield
\begin{align*}
\Sigma(u(t); \xi(t))
\lesssim  \frac{n}{\rho_{a(t)}} (\eta(t) +  \e\calP )  +  \e^{1 \over 2} 
\end{align*}
and therefore \eqref{Sigma.est1}. Then \eqref{gstab.ref1}  - \eqref{gstab.ref3}
follow from conclusions \eqref{gstab.c1}  -  \eqref{Lpcurrentboundsbigball}
of Proposition \ref{Propgammastability} (applied with $\xi_j$ replacing $\alpha_j$
for all $j$, and with $s_\e = \e\calP$ from \eqref{xis.def}).
\end{proof}



\section{Initialization of Gronwall loop}  \label{S.etadot}

We now begin to estimate $\dot \eta$. We will prove

\begin{proposition}  \label{propgronwall}
For all $0 \leq t \leq \tau_1$ we have 
{ \begin{align}
|\dot \eta (t)| 
\ \lesssim  \ {n \over \rho_{a(t)}^2} \eta(t) + \e^{1\over2}\calP + \frac {n^{3\over2}}{\rho_{a(t)}} 
\sqrt{ {n\over \rho_{a(t)}} \eta + \e^{1\over 2}\calP } 
\ \overset{\eqref{eta.bound}, \eqref{Psmall}}\le  \  \e^{1 \over 9 }. 
\label{doteta.bound1}\end{align} }
As a result,  for $t\in [\delta_\e, \tau_1]$ and $\e < \e_0$,
\begin{equation}  \label{lipschitzetaestimate}
\LV \eta(t) - \LA \eta(t) \RA_{\delta_\e} \RV   \le   \e^{\frac 25 + \frac 1 9} 
\qquad\mbox{for $\delta_\e = \e^{2 \over 5}$}. 
\end{equation}
\end{proposition}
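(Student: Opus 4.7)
The plan is to differentiate $\eta_j(t)=\int J(u(t,x))\,\Phi_j(x,t)\,dx$ directly, bound $|\dot\eta_j|$ using the variational estimates of Lemma~\ref{uptotau1} (available since $t\le\tau_1$), and conclude via $|\dot\eta|\le\sum_j|\dot\eta_j|$. The derivative splits as
\[
\dot\eta_j \ = \ \int \partial_t J(u)\,\Phi_j\,dx \ + \ \int J(u)\,\partial_t\Phi_j\,dx.
\]
On the first integral I use \eqref{consjac} and integrate by parts twice, rewriting it as $\J_{kl}\int(u_{x_m},u_{x_l})\,\partial_{x_k x_m}\Phi_j\,dx$. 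On the second I use $\partial_t\Phi_j=-\dot a_j\cdot\nabla_1\varphi+\dot\rho_{a(t)}\,\partial_2\varphi$, with $\dot a_j$ supplied by \eqref{PV2} and $|\dot\rho_a|\lesssim n/\rho_a$ coming from the same ODE.

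Next I would substitute $j(u_*(\xi))$ for $j(u)/|u|$ inside the quadratic form, paying the $L^{4/3}+L^2$ cost of \eqref{gstab.ref2} against $\partial^2\Phi_j\in L^2\cap L^4$, and paying $\|1-|u|^2\|_{L^2}\le\e\,\calP^{1/2}$ from \eqref{gstab.ref3} for the replacement of $(u_{x_m},u_{x_l})$ by $j_m(u)j_l(u)/|u|^2$; these errors produce the $\e^{1/2}\calP+\frac{n^{3/2}}{\rho_a}\sqrt{n\eta/\rho_a+\e^{1/2}\calP}$ contribution in \eqref{doteta.bound1}. With $j(u_*(\xi))$ in place, the integrand is an explicit rational function of $x-\xi_k$, and a further integration by parts identifies the result with $2\pi\sum_{k\ne j}K(\xi_j-\xi_k)$---the classical Kirchhoff-Onsager computation carried out already in \cite{CJ0,JSp2}. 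Simultaneously, the $\partial_t\Phi_j$ piece contributes $-\pi\dot a_j$ after testing against $\pi\sum_i\delta_{\xi_i}\approx J(u)$, modulo an $\e\calP$ error from \eqref{xis.def}. The leading cancellation is therefore between $2\pi\sum_{k\ne j}K(\xi_j-\xi_k)$ and $\dot a_j=2\pi\sum_{k\ne j}K(a_j-a_k)$; using $|DK(y)|\lesssim|y|^{-2}$ together with $|\xi_k-a_k|\lesssim|\eta_k|/\pi+\e\calP$ from \eqref{eta.rocks1}--\eqref{futureref}, this residue is bounded by $\frac{n}{\rho_a^2}\eta+\e\calP$.

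Assembling these estimates yields \eqref{doteta.bound1}. The pointwise bound $|\dot\eta|\le\e^{1/9}$ then follows by inserting $\eta\le\e^{1/3}\calP$ from \eqref{eta.bound}: the first term of \eqref{doteta.bound1} becomes $\e^{1/3}\calP$, the third is dominated by $\frac{n^{3/2}}{\rho_a}\sqrt{\e^{1/3}\calP+\e^{1/2}\calP}\lesssim\e^{1/6}\calP$, and \eqref{Psmall} absorbs every polynomial prefactor into a factor like $\e^{-1/18}$. The Lipschitz statement \eqref{lipschitzetaestimate} is then immediate: for $t\in[\delta_\e,\tau_1]$ write
\[
\eta(t)-\LA\eta\RA_{\delta_\e}(t)\ =\ \frac{1}{\delta_\e}\int_{t-\delta_\e}^{t}\bigl(\eta(t)-\eta(s)\bigr)\,ds
\]
and use $|\eta(t)-\eta(s)|\le\e^{1/9}|t-s|$ to bound the right-hand side by $\e^{1/9}\delta_\e=\e^{1/9+2/5}$.

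The main obstacle I expect is the first replacement step: the quadratic form $(u_{x_m},u_{x_l})$ has no intrinsic smallness, so the proof must isolate its leading singular part as $j(u_*)\otimes j(u_*)/|u_*|^2$ and organize every residual cross term so that each factor of $|j(u)/|u|-j(u_*(\xi))|$ or $|1-|u|^2|$ is paired with a companion in a norm controlled by Lemma~\ref{uptotau1}. Tracking which residual factors must go in $L^2$, which in $L^{4/3}$, and where one may instead exploit the pointwise decay \eqref{Ggradientestimateerror1} of $j(u_*)$ on $\R^2_{\rho_a}$ is the most delicate book-keeping.
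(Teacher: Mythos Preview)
Your approach is essentially the paper's, which packages the decomposition as Lemma~\ref{decomp.etadot}: the seven terms $T_{j,1},\ldots,T_{j,7}$ there are exactly your $\partial_t\Phi_j$ pieces ($T_{j,2},T_{j,3}$), the modulus--phase splitting and cross terms with $j(u_*)$ ($T_{j,4}$--$T_{j,7}$), and the Kirchhoff residue $\nabla_jW(\xi)-\nabla_jW(a)$ ($T_{j,1}$). One small correction worth noting: the remainder $(u_{x_m},u_{x_l})-j_m(u)j_l(u)/|u|^2=\partial_{x_m}|u|\,\partial_{x_l}|u|$ is not controlled by $\|1-|u|^2\|_{L^2}$ from \eqref{gstab.ref3} but by $\int e_\e(|u|)$ from \eqref{gstab.ref1}; since $\nabla^2\Phi_j$ vanishes on $\cup_jB_{\rho_a}(a_j)$, this term and the cross terms $T_6,T_7$ need only the pure $L^2$ estimate on $\R^2_{\rho_a}$ from \eqref{gstab.ref1}, not the $L^{4/3}+L^2$ bound \eqref{gstab.ref2}.
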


Due to the term involving $\sqrt{\eta(t)}$, the above estimate 
cannot be employed in a Gr\"onwall inequality to derive
very good bounds on the growth of $\eta$.   On the other hand, conclusions such
as \eqref{lipschitzetaestimate} will be used heavily in our later arguments.

To prove Proposition~\ref{propgronwall} we have the following decomposition of 
$\dot\eta$, which will be used again later.

\begin{lemma}
Let $u$ be a solution to the Schr\"odinger equation. Then for 
$0\le t \le \tau_1$ and $j=1,\ldots, n$
\begin{equation} \label{expansionetadot}
\dot{\eta_j} \ = \  \sum_{\ell = 1}^7 T_{j,\ell} 
\end{equation}
where 
\begin{align*}
T_{j,1} & =   \nabla_x \varphi(\xi_j - a_j, \rho_{a}) 
\cdot \J \LC \nabla_j W(\xi) - \nabla_j W(a) \RC \\
T_{j,2} & = \int J(u) \partial_\rho\varphi(x-a_j, \rho_a) \,\dot \rho_a \ dx\\
T_{j,3} & =  \int \LC J(u) - \sum_{i=1}^n \pi \delta_{\xi_i} \RC 
\dot a_j \cdot \nabla_x\varphi(x-a_j, \rho_a) dx \\
T_{j,4} & =  \int \J_{kl} \p_{x_l  \,x_m} \Phi_j 
\p_{x_k}\LV u \RV \p_{x_m} \LV u \RV dx  \\
T_{j,5} & = \int \J_{kl} \p_{x_l \,x_m} \Phi_j  
\LC {j(u) \over |u|} - j(u_*) \RC_k 
\LC {j(u) \over |u|} - j(u_*) \RC_m dx \\
T_{j,6} & =  \int \J_{kl} \p_{x_l \,x_m} \Phi_j  
\LC {j(u) \over |u|} - j(u_*) \RC_k (j(u_*))_m  dx \\
T_{j,7} & =   \int \J_{kl} \p_{x_l \,x_m} \Phi_j  
\LC {j(u) \over |u|} - j(u_*) \RC_m (j(u_*))_k  dx
\end{align*}
where $(j(u))_m$ is the $m$th component of the vector $j(u(t))$ and $u_*
= u_*(\cdot; \xi(t), d)$. 
\label{decomp.etadot}\end{lemma}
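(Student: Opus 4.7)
The strategy is direct: differentiate $\eta_j(t) = \int J(u(t,\cdot))\,\Phi_j(x,t)\,dx$ in $t$ and reorganize the result to match the seven summands. By the product rule,
\begin{equation*}
\dot\eta_j \ = \ \int \partial_t J(u)\,\Phi_j\,dx \ + \ \int J(u)\,\partial_t \Phi_j\,dx,
\end{equation*}
and the two integrals get unpacked separately.

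For the $\partial_t\Phi_j$ integral, the chain rule applied to $\Phi_j(x,t) = \varphi(x-a_j(t),\rho_{a(t)})$ yields
\begin{equation*}
\partial_t \Phi_j \ = \ -\dot a_j\cdot \nabla_x \varphi(x-a_j,\rho_a) \ + \ \dot\rho_a\,\partial_\rho \varphi(x-a_j,\rho_a).
\end{equation*}
The second summand contributes $T_{j,2}$ by inspection. In the first, I would add and subtract $\pi\sum_i\delta_{\xi_i}$ to $J(u)$: the ``error'' piece reproduces $\pm T_{j,3}$, while the delta piece collapses to the single term $\mp\pi\dot a_j\cdot \nabla_x\varphi(\xi_j-a_j,\rho_a) = \mp\pi\dot a_j$. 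The last equality uses the disjoint-support structure: \eqref{xis.def} gives $|\xi_j-a_j|\le \rho_a/(80n^4) < \rho_a$, so $\chi\equiv 1$ near $\xi_j-a_j$ and $\nabla_x\varphi = I$ there, while for $i\ne j$ the estimate $|\xi_i-a_j|\ge 4\rho_a - \rho_a/(80n^4) > 2\rho_a$ makes $\chi(\cdot/\rho_a)$ vanish identically, so $\nabla_x\varphi(\xi_i-a_j,\rho_a) = 0$. The ODE \eqref{PV2}, rewritten as $\dot a_j = \tfrac{1}{2\pi}\,\J\nabla_j W(a)$, then converts $\pi\dot a_j$ into $\tfrac{1}{2}\,\J\nabla_j W(a)$.

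For the $\partial_t J(u)$ integral I would invoke the conservation law \eqref{consjac} and integrate by parts twice; the compact support of $\Phi_j$ rules out any boundary terms and leaves
\begin{equation*}
\int \partial_t J(u)\,\Phi_j\,dx \ = \ \int \J_{kl}\,(u_{x_m},u_{x_l})\,\partial_{x_k}\partial_{x_m}\Phi_j\,dx.
\end{equation*}
Away from the zero set of $u$ one has the pointwise identity
$(u_{x_m},u_{x_l}) = \partial_{x_m}|u|\,\partial_{x_l}|u| + (j(u)/|u|)_m (j(u)/|u|)_l$;
the first summand contributes $T_{j,4}$ directly. In the second, the split $j(u)/|u| = \bigl(j(u)/|u| - j(u_*(\xi))\bigr) + j(u_*(\xi))$ and expansion of the product produce the diagonal ``error-squared'' term $T_{j,5}$, the two cross terms $T_{j,6}$ and $T_{j,7}$, and a single residual
\begin{equation*}
R \ := \ \int \J_{kl}\,(j(u_*))_m (j(u_*))_l\,\partial_{x_k}\partial_{x_m}\Phi_j\,dx
\end{equation*}
involving only the canonical harmonic map.

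The main obstacle is evaluating $R$. Using the representation $j(u_*) = -\J\nabla\mathcal{G}$ with $\mathcal{G}(x) = \sum_i \log|x-\xi_i|$ together with $\J^2 = -I$ converts $R$ into an expression with a logarithmic singularity at each $\xi_i$. I would excise small balls $B_\sigma(\xi_i)$, integrate by parts on the complement, and let $\sigma\downarrow 0$; the same disjoint-support structure as above implies that only the singularity at $\xi_j$ contributes in the limit, because $\partial_{x_k}\partial_{x_m}\Phi_j$ vanishes near the other $\xi_i$. The Pohozaev-type boundary residues that arise produce precisely $\tfrac{1}{2}\,\J\nabla_j W(\xi)$, mirroring the analogous identity established in the finite-energy setting of \cite{JSp2}. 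Combining this with the $-\tfrac{1}{2}\,\J\nabla_jW(a)$ picked up earlier, and recalling that $\nabla_x\varphi(\xi_j-a_j,\rho_a) = I$, yields $T_{j,1}$ and completes \eqref{expansionetadot}. Sign bookkeeping throughout is routine once the matching conventions for $\J$, $\perp$, and the definition of $K$ are fixed.
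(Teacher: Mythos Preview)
Your approach is essentially identical to the paper's: both split $\dot\eta_j$ via the product rule, handle $\partial_t\Phi_j$ by the chain rule together with an add-and-subtract of $\pi\sum_i\delta_{\xi_i}$, handle $\partial_tJ(u)$ via the conservation law \eqref{consjac} and the pointwise decomposition $(u_{x_m},u_{x_l}) = \partial_{x_m}|u|\,\partial_{x_l}|u| + (j(u)/|u|)_m(j(u)/|u|)_l$, and evaluate the residual $j(u_*)\otimes j(u_*)$ term by the Pohozaev-type identity from \cite{JSp2}. The paper's own proof is even terser than yours, simply stating the residual identity and referring to \cite{JSp2} for the details you sketch.
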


\begin{proof}
As in the proof of Lemma 1 of \cite{JSp2},
we write
\[
\frac d{dt} \eta_j = \frac d {dt}\int \Phi_j \ J(u) \ dx \ = \ 
\int  \frac d {dt}\Phi_j \ J(u)  \ dx +  \int\Phi_j \frac d {dt}\ J(u) \ dx,
\]
then expand using the definition of $\Phi_j$ and
the equation  \eqref{consjac} for the evolution of the Jaobian.
The proof then follows
from routine algebraic manipulations, together with the definition of
$\Phi_j$, the equation \eqref{PV2} for $\dot a_j$, and the fact that
\[
  \int \J_{kl} \p_{x_l \,x_m} \Phi_j  
 (j(u_*))_m (j(u_*))_k  \ dx =  \nabla_x\varphi(\xi_j-a_j, \rho_a)\cdot \nabla_j W(\xi).
\]
See \cite{JSp2} for a very similar argument\footnote{Indeed, compared to \cite{JSp2},
the only new term  here is $T_{j,2}$, which
arises from
the fact that in \cite{JSp2}, the definition of $\Phi_j$ had the form
$\Phi_j(x,t) := \varphi(x-a_j, \rho)$, for some $\rho$ independent of $t$,
whereas here  $\rho = \rho_{a(t)}$ depends on $t$. Apart
from this, the statement and proof are exactly the same.} with more details.
\end{proof}

%
%
%
%
%
%

Next we follow \cite{JSp2} and {estimate} $\dot \eta$ by separately considering
contributions from the different terms
isolated in Lemma \ref{decomp.etadot}.

\begin{proof}[proof of Proposition \ref{propgronwall}]
Note from Lemma  \ref{decomp.etadot} and the definition \eqref{eta.def} of $\eta$ that 
\begin{equation}
\dot \eta =  T_1+ \ldots + T_7 .\quad
\mbox{ where }
T_k = \sum_{j=1}^n \frac{\eta_j}{|\eta_j|} \cdot T_{j,k}.
\label{doteta.split2}\end{equation}
{We estimate these terms in turn, suppressing the argument $t$.
From \eqref{Wderivatives} we have }
\begin{equation} \label{rhodotbound}
| \dot \rho_a | \lesssim \sup_{j} | \dot a_j | = \sup_{j} \LV \nabla_{a_j} W \RV \lesssim {n \over \rho_a} \le \calP.
\end{equation}
To estimate $T_1$, note that $\nabla_x \varphi(\xi_j-a_j, \rho)$ is the identity matrix, so
in fact
\begin{align*}
|T_1| 
&\le 
 \sum_j |\nabla_jW(\xi) - \nabla_j W(a)| \\ 
 &\le
 \sum_{k=1}^n |\xi_k(t) - a_k(t)|
(\sup_k \sup_{ |y-a(t)| \le |\xi(t)-a(t)| }  |\nabla_k \nabla_{j  } W(y)|) .
\end{align*}
Using  \eqref{eta.rocks1}, as well as bounds on $\nabla^2W$
from \eqref{Wderivatives}, and arguing 
as in the proof of \eqref{Sigma.est1}, we conclude that
\begin{equation}
|T_1| 
\le
 C \frac {n}{\rho_a^2}\eta(t) + \e \calP.
 \label{T1.est}
\end{equation}
Next, the definition of $\varphi$ implies that $\partial_\rho\varphi(x-a_j, \rho_a) = - \frac {(x-a_j)}{\rho}[\frac {(x-a_j)}{\rho}\cdot\nabla\chi(\frac {x-a_j}\rho)]$, which vanishes in $B_{\rho(a)}(a_j)$, and in
particular at $x=\xi$. Thus
\begin{align}
|T_2|  & = \sum_j \left|\int (J(u) - \pi \sum  \delta_{\xi_i}) \partial_\rho\varphi(x-a_j, \rho_a) \,\dot \rho_a \ dx
\right|\nonumber \\
&\le |\dot \rho_a| \ \|J(u) - \pi \sum  \delta_{\xi_i}\|_{X^*_{\ln}}
\sum_j \| \partial_\rho\varphi(\cdot -a_j, \rho_a) \|_{X_{\ln}} \le \e\calP,
 \label{T2.est}\end{align}
where we have used {\eqref{xis.def} and \eqref{rhodotbound}.} Similar arguments yield
\begin{equation}
|T_3| \leq  \e\calP.
\label{T3.est}
\end{equation}
Continuing, 
since $\nabla^2 \Phi_j$ vanishes in $B_{\rho_a}(a_j)$ and noting that
$\|\nabla^2\Phi_j\|_\infty \le C\rho_a^{-1}$, we conclude from 
\eqref{gstab.ref1} that
\begin{equation}
\label{T45.est}
|T_4|, \, |T_5| \ \le \ \frac n {\rho_a^2}\eta + \e^{\frac 12}\calP.
\end{equation}
Finally, since 
$\|  j(u_*)\|_{L^2(\cup_j \mbox{\scriptsize{supp}} \nabla^2\Phi_j)}
\le \frac {Cn}{\rho_a}( C n \rho_a^2)^{1\over2} \lesssim n^{3\over2}$,
we can again {use} \eqref{gstab.ref1} and the same properties of $\nabla^2\Phi_j$ as above to
deduce that
\[
|T_{6}|, \, |T_7| \ \le
\frac C {\rho_\alpha}
\|  j(u_*)\|_{L^2(\cup_j \mbox{\scriptsize{supp}} \nabla^2\Phi_j)}
\LN \frac {j(u)}{|u|} - j(u_*) \RN_{L^2(\R^2_{\rho_a})}
\le C \frac{n^{\frac 32}}{\rho_a}\left( \frac{n}{\rho_a} \eta + \e^{\frac 12}\calP\right)^{1\over 2}.
\]
By assembling these estimates, we obtain inequalities \eqref{doteta.bound1},
and then \eqref{lipschitzetaestimate} follows rather easily.
\end{proof}

\section{Bounds on the supercurrent}  \label{section.currentbounds}

In this section we will prove the following

\begin{proposition}  \label{currentbounds}

For all $t \in [\delta_\e, \tau_1]$ we have
\begin{equation}
\frac {d}{dt}
\LA \eta \RA_{\delta_\e} (t) \  \lesssim  \ {n \over \rho^2_{a(t)}} \LA \eta \RA_{\delta_\e} (t) (t) + \e^{1\over 2}\calP  \label{avgcurrentboundsresult}
\end{equation}
for ${\delta_\e} = \e^{2 \over 5}$.
\end{proposition}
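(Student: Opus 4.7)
The plan is to use the decomposition $\dot\eta=\sum_{\ell=1}^7 T_\ell$ from Lemma~\ref{decomp.etadot} and integrate in time:
\[
\frac{d}{dt}\LA\eta\RA_{\delta_\e}(t) \ = \ \frac{1}{\delta_\e}\int_{t-\delta_\e}^t\dot\eta(s)\,ds \ = \ \frac{1}{\delta_\e}\sum_{\ell=1}^{7}\int_{t-\delta_\e}^t T_\ell(s)\,ds,
\]
bounding each summand in turn. Throughout, I would exploit that on $[t-\delta_\e,t]$ with $\delta_\e=\e^{2/5}$, $|\dot a_j(s)|\lesssim n/\rho_{a(s)}\lesssim\calP$, so the vortex positions drift by at most $\calP\cdot\delta_\e\ll\rho_a$; hence $\rho_{a(s)}$, $R_{a(s)}$ and every quantity entering $\calP$ stay within a constant factor of their values at $t$, which permits free passage between pointwise-in-$s$ bounds and their averages. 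For $T_1,\ldots,T_5$, the pointwise estimates \eqref{T1.est}, \eqref{T2.est}, \eqref{T3.est}, \eqref{T45.est} from the proof of Proposition~\ref{propgronwall} are already of the desired form $\tfrac{n}{\rho_{a(s)}^2}\eta(s)+\e^{1/2}\calP$ (no $\sqrt{\eta}$ contribution); integrating and dividing by $\delta_\e$ yields the required bound $\tfrac{n}{\rho_{a(t)}^2}\LA\eta\RA_{\delta_\e}(t)+\e^{1/2}\calP$.

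The principal obstacle is the pair $T_6+T_7$, which the pointwise argument \eqref{doteta.bound1} handled only via Cauchy--Schwarz on $w:=\tfrac{j(u)}{|u|}-j(u_*)$, producing the unfavorable $\sqrt{\eta}$ term. The key gain from time-averaging comes from the conservation laws \eqref{consmass} and \eqref{consmomentum}. I would first replace $\tfrac{j(u)}{|u|}$ by $j(u)$: the error takes the form $\int A_j\cdot\tfrac{j(u)(1-|u|)}{|u|}\,dx$ and is $O(\e\calP)$ by $\|1-|u|\|_{L^4}\le\e^{1/2}\calP$ from \eqref{gstab.ref3} together with the pointwise bound $|j(u_*)|\lesssim n/\rho_a$ on $\operatorname{supp}\nabla^2\Phi_j$. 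This reduces $T_6+T_7$, up to $O(\e\calP)$, to $\sum_j\int A_j(s,x)\cdot h(s,x)\,dx$, where $h:=j(u)-j(u_*(\xi))$ and $A_j$ is a smooth, compactly supported vector field (built from $\J\nabla^2\Phi_j$ and $j(u_*(\xi(s)))$) with $|A_j|\lesssim n^2/\rho_a^3$, smooth in $s$ through $\xi(s)$.

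To exploit the conservation laws, I would Hodge-decompose $A_j=\nabla^\perp G_j+\nabla H_j$ on $\R^2$, with $G_j,H_j$ smooth and of at most logarithmic growth. Integration by parts in space, together with $\operatorname{curl} h=2[J(u)-\sum_i\pi\delta_{\xi_i}]$ and $\operatorname{div} h=\tfrac12\partial_s|u|^2$ (from \eqref{consmass}), yields
\[
\int A_j\cdot h\,dx \ = \ -2\int G_j\Bigl[J(u)-\sum_i\pi\delta_{\xi_i}\Bigr]dx \ - \ \tfrac12\int H_j\,\partial_s|u|^2\,dx.
\]
The first term is $O(\e\calP)$ at each $s$ by $\|G_j\|_{X_{\ln}}\lesssim\calP$ and $\|J(u)-\sum_i\pi\delta_{\xi_i}\|_{X_{\ln}^*}\lesssim\e\calP$ from \eqref{xis.def}. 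For the second, averaging in $s$ and integrating by parts in time gives
\[
\tfrac{1}{\delta_\e}\!\int_{t-\delta_\e}^t\!\!\int\! H_j\,\partial_s|u|^2\,dx\,ds \ = \ \tfrac{1}{\delta_\e}\Bigl\{\!\Bigl[\!\int\! H_j(|u|^2-1)\,dx\Bigr]_{s=t-\delta_\e}^{s=t}\!-\!\int_{t-\delta_\e}^t\!\!\int\!\partial_s H_j(|u|^2-1)\,dx\,ds\Bigr\},
\]
and each piece is bounded by $\|H_j\|_{L^2}\|1-|u|^2\|_{L^2}\lesssim\calP\cdot\e\calP=\e\calP$ via \eqref{gstab.ref3}; division by $\delta_\e=\e^{2/5}$ still leaves $O(\e^{3/5}\calP)\le\e^{1/2}\calP$, as required.

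The hardest part will be carrying out the Hodge decomposition and the associated weighted estimates rigorously on $\R^2$: the potentials $G_j,H_j$ are only defined up to additive constants and exhibit logarithmic tails, so they require careful cut-offs, and the claim $\|G_j\|_{X_{\ln}}\lesssim\calP$ needs estimates on convolution with the Biot--Savart and Newtonian kernels in the weighted norm. In addition, the smooth correction $\partial_s j(u_*(\xi(s)))=\sum_i\dot\xi_i\cdot\nabla_\xi j(u_*)(x;\xi)$ hidden inside $\partial_s A_j$ and $\partial_s H_j$ must be tracked explicitly; it is localized near $\xi_i(s)$ and controlled by \eqref{Ggradientestimateerror1} together with $|\dot\xi_i|\lesssim n/\rho_a$, but verifying that all scaling factors of $n,\rho_a^{-1}$ combine into the form $\tfrac{n}{\rho_a^2}\LA\eta\RA_{\delta_\e}+\e^{1/2}\calP$ is where the bookkeeping must be done with care.
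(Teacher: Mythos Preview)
Your treatment of $T_1,\ldots,T_5$ and of the curl contribution in $T_6,T_7$ is fine and mirrors the paper (your curl argument is essentially a dual formulation of Lemma~\ref{poissonX}). The divergence part, however, has a genuine gap. You integrate $\int H_j\,\partial_s|u|^2\,dx$ by parts in $s$, producing a term $\int\partial_s H_j\,(|u|^2-1)\,dx$, and you control $\partial_s H_j$ via $|\dot\xi_i|\lesssim n/\rho_a$. But the points $\xi_i(s)$ are only known to be \emph{continuous} in $s$; they are characterized by \eqref{xis.def} only up to length scale $\e\calP$, and the only available control is the difference bound \eqref{xi.change}, which carries an additive $\e\calP$ error and does not give Lipschitz continuity, let alone differentiability. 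So $\dot\xi_i$ is not at your disposal, and neither is $\partial_s H_j$. A secondary issue is your estimate of the boundary term $\int H_j(|u|^2-1)\,dx$ by $\|H_j\|_{L^2}\|1-|u|^2\|_{L^2}$: the potential $H_j$ solves $\Delta H_j=\nabla\cdot A_j$ with $A_j$ compactly supported, hence $H_j(x)\sim c\cdot x/|x|^2$ at infinity and $H_j\notin L^2(\R^2)$ in general, while you only have $|u|^2-1\in L^2$ and not in any $L^p$ with $p<2$.

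The paper avoids both problems by Hodge-decomposing the \emph{current} $j(u)-j(u_*)=f+g$ rather than the test function $\zeta$. Then $\nabla\cdot\LA f\RA_{\delta_\e}=\tfrac{1}{2\delta_\e}[|u|^2-1]_{t-\delta_\e}^t$ is small in $L^2$, and elliptic regularity plus Gagliardo--Nirenberg interpolation with the a~priori bound $\|f\|_{L^{4/3}+L^2}\le\calP$ makes $\LA f\RA_{\delta_\e}$ small in high Lebesgue norms, which pairs against $\LA\zeta\RA_{\delta_\e}$. The residual ``commutator'' $\LA\int(\zeta-\LA\zeta\RA_{\delta_\e})\cdot(f-\LA f\RA_{\delta_\e})\,dx\RA_{\delta_\e}$ is then handled using only the \emph{difference} estimate $\|\zeta(s)-\zeta(s')\|_{L^4\cap L^2}\lesssim\calP\,\delta_\e$, which in turn relies solely on \eqref{xi.change} and never requires $\dot\xi$. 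This commutator splitting is the missing idea in your proposal.
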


{The main point is to improve our earlier estimates of the terms $T_6$ and $T_7$ arising in Lemma
\ref{decomp.etadot}.
We will focus on  $T_6$, as the argument for $T_7$ is identical.  We will write}
\begin{equation}  \label{zeta.def}
T_6 =
 \int \zeta\cdot\    (\frac{j(u)}{|u|} - j(u_*))  \ dx, \qquad
\zeta_k := \sum_j \J_{kl} \p_{x_l x_m} 
\LC {\eta_j \over |\eta_j|} \cdot \Phi_j \RC j_m(u_*), \quad k =1, 2,
\end{equation}
where $u_*(t,x) = u_*(x; \xi(t),d)$, and we recall that $\xi(t)$ is characterized by
\eqref{xis.def}.
We note  that  $|\zeta| \leq C {n \over \rho_\xi^2}$, since $\nabla^2\Phi_j=0$
in $B_{\rho(a)}(a_j)$, and also that $| \operatorname{supp} \zeta | \leq C n \rho_\xi^2$.  Therefore,
\begin{equation} \label{zetaboundLp}
\LN \zeta \RN_{L^q(\R^2)} \lesssim n^{1 + {1\over q}} \rho_\xi^{{ 2 \over q} - 2} \ \le \ \calP
\end{equation}
for $1 \leq q \leq \infty$.  

For every $s$, we carry out a Hodge decomposition,
writing
\begin{equation}
j(u) - j(u_*) = f+ g, \qquad\mbox{ where }
\nabla\times f =  \nabla\cdot g = 0
\label{hodge}\end{equation}
and
\begin{equation}
\| f \|_{L^{\frac 43}+L^2}
+
\| g \|_{L^{\frac 43}+L^2}
\lesssim
\| j(u) - j(u_*) \|_{L^{\frac 43}+L^2}\overset{\eqref{gstab.ref2}} \lesssim
  \left( \frac{n  }{\rho_{a(s)}} \eta(s) \right)^{1\over2} + \e^{\frac 14}\calP.
\label{fgLqp}\end{equation}
(We suppress the dependence of $f,g$ on $s$.)
The existence of such $f,g$ is standard. Indeed, if we temporarily write $\psi = j(u)- j(u_*)$, then
in terms of the Fourier transform
\[
\hat f(\xi) = \frac { \xi (\xi \cdot \hat \psi(\xi))}{|\xi|^2},
\qquad
\hat g(\xi) = \frac { \xi^\perp (\xi^\perp \cdot \hat \psi(\xi))}{|\xi|^2},
\]
and we deduce  \eqref{fgLqp} from standard harmonic analysis facts.
Using \eqref{consmass}, \eqref{hodge},  and properties of $j(u_*)$,
we find that
\begin{equation}\label{fgequation}
\nabla\cdot f= \frac 12 \partial_t (|u|^2-1),
\qquad
\nabla\times g  = 2 (J(u) - \sum_{j=1}^n \pi  \delta_{\xi_j}  ) . 
\end{equation}

We may now write
\begin{align}
T_6 
&= 
\int \zeta \cdot {j(u) \over |u|} \LC 1 - |u| \RC\,dx \   + \ 
\int \zeta \cdot g\, dx 
\ +\ 
\int\zeta \cdot f\,dx
\nonumber\\
&=:
\Xi_1+\Xi_2+\Xi_3.
\label{T6decomp}\end{align}

We easily dispense with $\Xi_1$ by using \eqref{zetaboundLp} and \eqref{energyupperboundBigBall}  to find that
\begin{equation} \label{chi1bound} 
\LV \Xi_1 \RV \  \leq  \ 2 \e \LN \zeta (s) \RN_{L^2} \sqrt{ \int_{B_{R_\alpha}} e_\e(u(s)) \ dx} \\
\  \le\ \e \calP.
\end{equation}

In order to bound $\Xi_2$ and $\Xi_3$, we will analyze $f$ and $g$ separately.

\subsection{Curl bounds}

Our estimate of $\Xi_2$ will use the following lemma to exploit the fact that $\nabla\times (j(u)-j(u_*) = \nabla\times g$
is very small.

\begin{lemma}
Let $g\in L^r + L^s(\R^2;\R^2)$ for some $r,s<\infty$. Assume that
$\nabla\cdot g=0$, and that 
$\mu := \nabla \times g$ is a signed measure 
with $\| \mu \|_{X_{\ln}^*}<\infty$.

Then for any $p\in (1,2)$ and any $R>0$,
\[
\| g\|_{L^p(B_R)} \le C(1+\ln^+R) 
\Big[
\| \mu\|_{X_{\ln^*}}^{\frac 2p-1}  \   \big( | \mu|(B_{R+1}) \big)^{2- \frac 2p}
+
\| \mu\|_{X_{\ln^*}}
\Big]. 
\]
\label{poissonX}
\end{lemma}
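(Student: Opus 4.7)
The strategy is to use the Biot--Savart representation $g = K \ast \mu$, where $K(x) = x^\perp/(2\pi|x|^2)$ is the 2D Biot--Savart kernel, and then estimate the $L^p(B_R)$ norm by splitting $\mu$ into a near-field and a far-field piece. First, $\|\mu\|_{X_{\ln}^*}<\infty$ forces $\mu(\R^2)=0$ (by the remark following Lemma~\ref{lem.dual}), which supplies the cancellation needed for $K\ast\mu$ to make sense at infinity. Uniqueness of the representation holds because any divergence-free, curl-free vector field in $L^r+L^s$ with finite $r,s$ is a harmonic polynomial, and any nonzero harmonic polynomial lies in no $L^q$ with $q<\infty$.

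Fix a smooth cutoff $\eta$ equal to $1$ on $B_{R+1/2}$ and vanishing outside $B_{R+1}$, with $|\nabla\eta|\le 4$, and write $\mu_1:=\eta\mu$, $\mu_2:=(1-\eta)\mu$, with corresponding decomposition $g = g_1+g_2$, $g_i := K\ast\mu_i$. For the far-field piece $g_2$, the test function $\Psi_x(y):=K(x-y)(1-\eta(y))$ is smooth in $y$ for each fixed $x\in B_R$ (its support keeps $|x-y|\ge 1/2$), is bounded on the transition annulus, and satisfies $|\nabla_y\Psi_x(y)|\le C|y|^{-2}$ for $|y|\gg R$. A direct computation of the weighted supremum yields $\|\Psi_x\|_{X_{\ln}}\le C(1+\ln^+R)$ uniformly in $x\in B_R$, so that $|g_2(x)|\le C(1+\ln^+R)\|\mu\|_{X_{\ln}^*}$ pointwise on $B_R$ by the $X_{\ln}^*$ duality pairing (in the extended sense of Lemma~\ref{lem.dual}). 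This produces the $+\|\mu\|_{X_{\ln}^*}$ summand in the final estimate.

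For the near-field piece $g_1=K\ast\mu_1$, apply Hedberg's maximal function trick. For $x\in B_R$ and a truncation radius $r>0$,
\[
|g_1(x)|\ \le\ \int_{|x-y|<r}\frac{d|\mu_1|(y)}{2\pi|x-y|}\ +\ \Bigl|\int_{|x-y|\ge r}K(x-y)\,d\mu_1(y)\Bigr|,
\]
where the first term is bounded by $C r\,\mathcal{M}|\mu_1|(x)$ (with $\mathcal{M}$ the Hardy--Littlewood maximal function of a measure, via the Fubini identity). For the tail integral, the crude bound $|\mu|(B_{R+1})/r$ is too weak; instead we smoothly truncate $K(x-\cdot)$ outside $B_r(x)$ and pair against $\mu$ using the $X_{\ln}^*$ duality, which by the same $X_{\ln}$-norm calculation as for $\Psi_x$ yields a tail bound of order $(1+\ln^+R)\|\mu\|_{X_{\ln}^*}/r^2$. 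Balancing the two contributions in $r$ in terms of $\mathcal{M}|\mu_1|(x)$, $|\mu|(B_{R+1})$, and $(1+\ln^+R)\|\mu\|_{X_{\ln}^*}$, then raising to the $p$-th power and integrating over $B_R$ using Kolmogorov's weak-$L^1$ estimate for $\mathcal{M}|\mu_1|$ (with exponent strictly less than $1$, which is available since $p<2$), interpolates between the two endpoints and produces the bound $\|\mu\|_{X_{\ln}^*}^{2/p-1}|\mu|(B_{R+1})^{2-2/p}$ with the logarithmic prefactor.

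The main obstacle is the interpolation in the last step: the naive Hedberg estimate, which uses only the total variation $|\mu|(B_{R+1})$ to bound the tail, yields a spurious factor of $R^{2/p-1}$ that is incompatible with the stated inequality. Removing this factor requires exploiting the cancellation in $\mu_1$ via the $X_{\ln}^*$ pairing on the tail of the Hedberg decomposition, and the entire proof hinges on carefully tracking the two competing scales set by $|\mu|(B_{R+1})$ and $(1+\ln^+R)\|\mu\|_{X_{\ln}^*}$ so that the optimization in $r$ produces exponents summing to~$1$ as required.
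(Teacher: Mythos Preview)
Your Biot--Savart/Hedberg approach is genuinely different from the paper's, but as written it does not deliver the stated estimate: both the far-field and near-field pieces pick up spurious powers of $R$ when passed from pointwise to $L^p(B_R)$ bounds.

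For the far field, your pointwise bound $|g_2(x)|\le C(1+\ln^+R)\|\mu\|_{X_{\ln}^*}$ on $B_R$ only yields $\|g_2\|_{L^p(B_R)}\le C(1+\ln^+R)\|\mu\|_{X_{\ln}^*}\,R^{2/p}$, which is larger than the claimed $+\,C(1+\ln^+R)\|\mu\|_{X_{\ln}^*}$ by a factor $R^{2/p}$. For the near field, the $X_{\ln}$ norm of the truncated kernel $K(x-\cdot)\eta(\cdot)\psi_r$ is of order $(1+\ln^+R)/r^2$, so balancing against $r\,\mathcal M|\mu_1|(x)$ gives the pointwise bound $|g_1(x)|\lesssim B^{1/3}\mathcal M|\mu_1|(x)^{2/3}+B$ with $B=(1+\ln^+R)\|\mu\|_{X_{\ln}^*}$. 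Integrating via Kolmogorov (valid only for $p<3/2$, incidentally) then produces $\|g_1\|_{L^p(B_R)}\lesssim B^{1/3}A^{2/3}R^{2/p-4/3}+BR^{2/p}$, again with positive powers of $R$ and with the wrong exponents on $A$ and $\|\mu\|_{X_{\ln}^*}$. Your last paragraph correctly identifies that the naive Hedberg tail bound loses a factor $R^{2/p-1}$, but the replacement tail bound you propose does not fix the problem; the volume factor is structural to any argument that first proves a pointwise inequality and then integrates.

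The paper avoids this entirely by working in duality: it pairs $g$ against a test field $w\in L^{p'}$ supported in $B_R$, performs a Hodge decomposition $w=\nabla\alpha+\nabla^\perp\beta$, integrates by parts to reduce to $\int\beta\,d\mu$, and then splits $\beta=\eta_\delta*\beta+(\beta-\eta_\delta*\beta)$ for a radial mollifier $\eta_\delta$. The Hölder continuity of $\beta$ (from $\nabla\beta\in L^{p'}$, $p'>2$) controls the second piece against $|\mu|(B_{R+\delta})$, while $\|\eta_\delta*\beta\|_{X_{\ln}}\lesssim(1+\ln^+R)\delta^{-2/p'}\|w\|_{L^{p'}}$ controls the first piece against $\|\mu\|_{X_{\ln}^*}$; since $\beta$ is harmonic outside $B_R$ and decays like $|x|^{-1}$, its $X_{\ln}$ norm depends only logarithmically on $R$, which is exactly what your direct approach cannot achieve. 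Optimizing in $\delta$ then gives the stated exponents with no residual power of $R$.
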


\begin{proof}
{1.} Fix a smooth vector field $w$ with compact support in $B_R$.
There exist smooth functions $\alpha,\beta$ on $\R^2$
such that 
\begin{equation}\label{zetahodge}
w = \nabla \alpha + \nabla^\perp \beta, \qquad\quad
\| \nabla \alpha\|_{L^p} + \|\nabla^\perp\beta\|_{L^p} \le C_p\|w\|_{L^p}  \ \le \ \calP \quad \mbox{ for }1<p<\infty.
\end{equation}
Indeed, by differentiating \eqref{zetahodge}, we find that
\[
\Delta \alpha = \nabla\cdot w, \qquad \Delta \beta = \nabla\times w,
\]
so $\alpha, \beta$  may be found by convolution with 
the Green's function $G(x) = \frac 1{2\pi}\ln |x|$ and 
an appeal to standard elliptic estimates.
For example, we have
\[
\alpha(x) = 
 \frac 1{2\pi} \int _{B_{R}}\ln|x-y|  (\nabla\cdot w)(y) dy 
\ = \ 
\frac  1{2\pi} \int _{B_{R}}\frac{x-y}{|x-y|^2}  \cdot w(y) dy . 
\]
This formula can also be differentiated with respect to $x$, and a
similar formula also holds for $\beta$. From these considerations one easily sees that
for $k \ge 0$ and $|x| \ge  2R$,
\begin{equation}\label{ab.decay}
|\nabla^k \alpha(x)| \le  C \|  w\|_{L^1}|x|^{-k-1}, 
\quad\
|\nabla^k \beta(x)| \le  C \| w\|_{L^1}|x|^{-k-1} .
\end{equation}

{2.} We now claim that 
\begin{equation}
\int w \cdot g \ dx=   - \int \beta \,\nabla\times g \ dx  = -  \int \beta\, d\mu.
\label{ibyphodge}\end{equation}
This is formally clear, and so the point is to justify the integration by parts.
For any family  $(\chi_{\widetilde R})_{\widetilde R\ge 1}$ satisfying \eqref{chiRdef1}, \eqref{chiRdef2},
it follows from \eqref{ab.decay} that $\| \alpha \chi_{\widetilde R} \|_{L^p(\R^2)}\to 0$ as $\widetilde R\to\infty$, for any $p>1$.
So, recalling that $\nabla\cdot g=0$,
we have 
\begin{align*}
\Big|\int \nabla\alpha \cdot g \ dx \Big|
= 
\lim_{\widetilde R\to\infty}\Big| \int \chi_{\widetilde R} \nabla \alpha \cdot g \ dx \Big|
&= 
\lim_{\widetilde R\to \infty} \Big|\int  \alpha  \nabla\chi_{\widetilde R } \cdot g \ dx\Big| \\
&
\le \lim_{\widetilde R\to\infty} \big(\| g \|_{L^r+L^s} \| \alpha\nabla \chi_{\widetilde R} \|_{L^{r'}\cap L^{s'}} \big) = 0.
\end{align*}  
Essentially the same argument shows that 
\[
\int \nabla^\perp \beta \cdot g \ dx =  -\int \beta\, \nabla\times g \ dx,
\]
and this completes the proof of \eqref{ibyphodge}.

{3.} Now we fix $p,q$ such that $1<p<2$ and $\frac 1p+\frac 1q=1$, and
we write
\[
\int \beta\, d\mu = 
\int \eta_\delta * \beta\, d\mu + 
\int (\beta - \eta_\delta*\beta) \,d\mu ,
\]
where $\eta_\delta$ is a radially symmetric mollifier with support in $B_\delta$ for some $\delta\le 1$ to be chosen below.
By elementary computations, a Sobolev embedding theorem, and
properties \eqref{zetahodge} of $\beta$, 
\[
| \beta-\eta_\delta *\beta| \le \delta^\theta [ \beta ]_\theta \le 
C\delta^\theta
\| \nabla \beta \|_{L^q} \le 
C\delta^\theta
\| w \|_{L^q} \
\]
where $\theta = 1- \frac 2q = \frac 2p-1$ and  $[  \ \cdot \  ]_\theta$ denotes the $\theta$-H\"older seminorm.
In addition, since $\eta_\delta$ is radially symmetric and 
$\beta$ is harmonic outside $\mbox{supp}\,w\subset B_R$,
it follows that $\eta_\delta* \beta = \beta $ outside of $B_{R+\delta}$.
Therefore
\[  
 \int ( \beta-\eta_\delta *\beta)d\mu
\le C \delta^{1- \frac 2q} \| w\|_{L^q} |\mu|(B_{R+\delta}).
\] 
Now we again use properties \eqref{zetahodge} of $\beta$
to find that
\begin{equation}
\|\nabla (  \eta_\delta *\beta )\|_{L^\infty} =
\| \eta_\delta *\nabla \beta \|_{L^\infty} 
\le \|\eta_\delta\|_{L^p} \ \| \nabla \beta \|_{L^q} \le \ C \, \delta^{-\frac 2q}\, \| w \|_{L^q} .
\label{pqpq2}\end{equation}
Next,  for $|x|\ge 2R+1$, it follows from \eqref{ab.decay} and the fact that $\eta_\delta*\beta = \beta$
that
\begin{equation}
|\nabla  ( \eta_\delta *\beta)(x)| \le C |x|^{-2} \|  w \|_{L^1}
\le  C |x|^{-2} R^{\frac 2 p} \| w\|_{L^q} \ \le  C |x|^{-\frac 2q } \|w\|_{L^q}. 
\label{pqpq3}\end{equation}
By combining \eqref{pqpq2} and \eqref{pqpq3}, we deduce that
\[
\| \eta_\delta *\beta \|_{X_{\ln}} \ \le C(1+ \ln^+ R) \  \delta^{-\frac 2q} \|w\|_{L^q},
\]
and it follows from Lemma \ref{lem.dual} that
\[
\left|\int \eta_\delta*\beta \ d\mu \right| \le  C(1+ \ln^+ R) \  \delta^{-\frac 2q} \|w\|_{L^q} \| \mu\|_{X_{\ln}^*}.
\]
Assembling the above, we find that if $\delta \le 1$ then 
\[
\int w \cdot g \ dx \le \ C \left( \delta^{1-\frac 2q}a  + 
\delta^{-\frac 2q} b 
\right)\|w\|_{L^q}\qquad\mbox{ for }a:=|\mu|(B_{R+1})\mbox{\ \ and  \  \ }
b:= (1+ \ln^+ R) \| \mu\|_{X_{\ln}^*}.
\]
Choosing $\delta = \min \{ 1, b/a\} \le 1$, we deduce that
\[
\int w \cdot g  \ dx \le C \| w \|_{L^q} \Big(  a^{\frac 2q} b^{1-\frac 2q} + b \Big).
\]
By density, the same inequality holds for all $w\in L^q(B_R)$, and by duality, this
implies the conclusion of the lemma, after a little rewriting.
\end{proof}

The vector field $g$ appearing in $\Xi_2$ satisfies
$\mu :=\nabla\times g =  2 \LC J(u) - \sum \pi  \delta_{\xi_j} \RC$.
Thus
\[
|\mu|(B_{R_{a(t)}+1})
\ \le C \int_{B_{R_{a(t)}+1}} e_\e(u) \ dx + n\pi \\
 \overset{\eqref{energyupperboundBigBall}} \le \calP.
\]
We therefore deduce from the above lemma that for $1<p<2$,
\[
\| g \|_{L^p(B_{R_{a(t)}})} \le  \e^{\frac 2p-1} \calP
\]
where the coefficients in the polynomial $\calP$ depend on $p$.
In particular, noting that the function $\zeta$ appearing in $\Xi_2$ is supported in
$B_{R_{a(t)}}$, we have proved
\begin{lemma}
For any $p\in (1,2)$  {and $t\in [\delta_\e, \tau_1]$,}
\begin{equation}
| \Xi_2 |  \ \leq \  \LN \zeta \RN_{L^{q}} \LN  g \RN_{L^{p}(B_{R_{a(t)}}) }
 \ \lesssim \ 
\e^{\frac 2p-1} \calP.
\label{chi2bound}
\end{equation}
\end{lemma}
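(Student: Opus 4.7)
The plan is to estimate $\Xi_2 = \int \zeta \cdot g\, dx$ in two steps: first apply H\"older's inequality to split the integral into norms of $\zeta$ and $g$, then invoke Lemma \ref{poissonX} together with the already-established bound \eqref{zetaboundLp} on $\zeta$.

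First, I would observe that since $\nabla^2 \Phi_j$ vanishes outside $\operatorname{supp}(\Phi_j) \subset B_{R_{a(t)}}$, the function $\zeta$ defined in \eqref{zeta.def} is supported inside $B_{R_{a(t)}}$. Consequently, for $p \in (1,2)$ and its H\"older conjugate $q = p/(p-1) \in (2,\infty)$,
\[
|\Xi_2| = \left| \int_{B_{R_{a(t)}}} \zeta \cdot g\, dx \right| \le \|\zeta\|_{L^q} \, \|g\|_{L^p(B_{R_{a(t)}})},
\]
which establishes the first inequality in the lemma. The factor $\|\zeta\|_{L^q} \le \calP$ is immediate from \eqref{zetaboundLp}.

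For the second factor, I would apply Lemma \ref{poissonX} to $g$ with $R = R_{a(t)}$. The hypotheses are met: $\nabla \cdot g = 0$ by construction of the Hodge decomposition \eqref{hodge}, the decay $g \in L^{4/3} + L^2$ is ensured by \eqref{fgLqp}, and $\mu := \nabla \times g = 2(J(u) - \sum \pi \delta_{\xi_j})$ by \eqref{fgequation}. Next I would verify the two quantitative inputs needed in the lemma. From \eqref{xis.def} we have $\|\mu\|_{X^*_{\ln}} \le \e\calP$. On the other hand,
\[
|\mu|(B_{R_{a(t)}+1}) \le 2\int_{B_{R_{a(t)}+1}} |J(u)|\, dx + 2n\pi \le C\int_{B_{R_{a(t)}+1}} e_\e(u)\, dx + 2n\pi \le \calP,
\]
using \eqref{energyupperboundBigBall} in the last step. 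Since $1 + \ln^+ R_{a(t)} \le \calP$, Lemma \ref{poissonX} gives
\[
\|g\|_{L^p(B_{R_{a(t)}})} \le \calP \left[ (\e \calP)^{\frac{2}{p}-1} \calP^{2-\frac{2}{p}} + \e \calP \right] \lesssim \e^{\frac{2}{p}-1}\calP,
\]
where we absorbed the milder $\e\calP$ term since $\frac{2}{p}-1 < 1$ for $p > 1$.

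Combining the two bounds yields $|\Xi_2| \le \calP \cdot \e^{\frac{2}{p}-1}\calP \lesssim \e^{\frac{2}{p}-1}\calP$, as required. No real obstacle arises here; the work was already done in Lemma \ref{poissonX} and in the bounds \eqref{zetaboundLp}, \eqref{xis.def}, \eqref{energyupperboundBigBall}. The only item of care is to keep track of the fact that the coefficients in $\calP$ may depend on $p$, which is harmless since $p$ will be fixed later in the application.
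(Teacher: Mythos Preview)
Your proof is correct and follows essentially the same route as the paper: you apply H\"older's inequality using the fact that $\zeta$ is supported in $B_{R_{a(t)}}$, bound $\|\zeta\|_{L^q}$ via \eqref{zetaboundLp}, and then invoke Lemma~\ref{poissonX} with $\mu = 2(J(u)-\pi\sum\delta_{\xi_j})$, checking $\|\mu\|_{X_{\ln}^*}\le \e\calP$ from \eqref{xis.def} and $|\mu|(B_{R_{a(t)}+1})\le\calP$ via $|J(u)|\le e_\e(u)$ and \eqref{energyupperboundBigBall}. The paper does exactly this (the argument actually appears in the paragraph immediately preceding the lemma statement), including the remark that the coefficients in $\calP$ depend on $p$.
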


\subsection{Divergence bounds}

We now exploit the fact that the divergence of $j(u)-j(u_*)$ is small, after averaging in the $t$ variable.
We start by
noting that
\begin{align*}
\LA \Xi_3\RA_{\delta_\e} 
&= \int \LA  \zeta \RA_{\delta_\e}\cdot \LA f \RA_{\delta_\e} dx
+ \LA  \int \big( \zeta - \LA  \zeta\RA_{\delta_\e} \big)\cdot \big(f- \LA f \RA_{\delta_\e}\big) dx\RA_{\delta_\e}\\
&=
 \Xi_{3,1} + \Xi_{3,2}.
\end{align*}

\begin{lemma}
For any $\theta\in (0,1)$  {and $t\in [\delta_\e, \tau_1]$,}
\begin{equation}  \label{chi3bound}
\LV \Xi_{3,1} \RV  \le  \left( \frac \e {\delta_{\e}}\right)^{1-\theta}\calP
\end{equation}
(for $\calP$ depending on $\theta$). In particular, since $\delta_\e = \e^{2/5}$,
we have $\LV \Xi_{3,1} \RV  \le \e^{1/2}\calP$.
\end{lemma}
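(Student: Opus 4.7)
My plan is to exploit that time-averaging turns $\nabla\cdot f=\tfrac12\partial_t(|u|^2-1)$ from \eqref{fgequation} into a finite difference in $t$, while $f$ itself remains pointwise curl-free. Indeed, by \eqref{avg.def},
\[
\nabla\cdot\LA f\RA_{\delta_\e}(t) \ = \ \frac1{2\delta_\e}\LB (|u(t)|^2-1)-(|u(t-\delta_\e)|^2-1)\RB,
\]
so \eqref{gstab.ref3} gives $\|\nabla\cdot\LA f\RA_{\delta_\e}\|_{L^2}\lesssim (\e/\delta_\e)\calP$. Moreover, \eqref{gstab.ref2}, the bound $\eta\le \e^{1/3}\calP$ from \eqref{eta.bound}, and $L^p$-boundedness of the Helmholtz--Hodge projection give $\|\LA f\RA_{\delta_\e}\|_{L^{4/3}+L^2}\lesssim \e^{1/6}\calP$.

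Next I would decompose $\LA\zeta\RA_{\delta_\e}=\nabla A+\nabla^\perp B$ globally on $\R^2$ with $A,B$ the Newtonian potentials of $\nabla\cdot\LA\zeta\RA_{\delta_\e}$ and $\nabla\times\LA\zeta\RA_{\delta_\e}$. Since $\LA\zeta\RA_{\delta_\e}$ is smooth, bounded by $\calP$, and supported in $B_{R_a}$, explicit estimation of the kernel convolutions yields $|A(x)|,|B(x)|\lesssim \calP\min(1,R_a/|x|)$, $|\nabla A(x)|,|\nabla B(x)|\lesssim \calP/|x|^2$ for $|x|\ge 2R_a$, and $\|\nabla A\|_{L^p}+\|\nabla B\|_{L^p}\lesssim C_p\calP$ for every $p\in(1,\infty)$ by Calder\'on--Zygmund. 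Using $\nabla\times\LA f\RA_{\delta_\e}=0$ from \eqref{hodge} together with a cutoff integration by parts analogous to the one in the proof of Lemma~\ref{poissonX}, the $\nabla^\perp B$ contribution vanishes, leaving $\Xi_{3,1}=\int\nabla A\cdot\LA f\RA_{\delta_\e}\,dx$.

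I then split this using a smooth cutoff $\chi_R$ of type \eqref{chiRdef1}--\eqref{chiRdef2} at radius $R$ to be optimized. On the inner part I write $\chi_R\nabla A=\nabla(\chi_R A)-A\nabla\chi_R$ and integrate by parts, using that $\chi_R A$ is compactly supported, to obtain
\[
\int\chi_R\nabla A\cdot\LA f\RA_{\delta_\e}\,dx = -\int \chi_R A\,\nabla\cdot\LA f\RA_{\delta_\e}\,dx-\int A\nabla\chi_R\cdot\LA f\RA_{\delta_\e}\,dx.
\]
Cauchy--Schwarz with $\|\chi_R A\|_{L^2}\lesssim \calP\sqrt{\ln R}$ (the logarithm coming from the $1/|x|$ tail of $A$) bounds the first integral by $\calP\sqrt{\ln R}\,(\e/\delta_\e)$. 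For the remaining terms involving $A\nabla\chi_R$ and $(1-\chi_R)\nabla A$, the decay $\nabla A=O(|x|^{-2})$ together with the properties of $\nabla\chi_R$ yield $L^4\cap L^2$ norms of order $\calP/R$, so duality with $\LA f\RA_{\delta_\e}\in L^{4/3}+L^2$ produces a contribution $\lesssim (\calP/R)\e^{1/6}$.

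Combining these estimates, $|\Xi_{3,1}|\lesssim \calP\sqrt{\ln R}\,(\e/\delta_\e)+(\calP/R)\e^{1/6}$. Choosing $R=\e^{-1/2}$ makes the first summand $\lesssim \calP\e^{3/5}|\ln\e|^{1/2}$ and the second $\lesssim \calP\e^{2/3}$; since $|\ln\e|^{1/2}$ grows slower than any negative power of $\e$ and $2/3>3/5$, both are dominated by $\calP(\e/\delta_\e)^{1-\theta}$ for any $\theta\in(0,1)$ and $\e$ sufficiently small, with $\calP$ depending on $\theta$. The main technical obstacle is the slow decay of $A$: since $A\sim 1/|x|$, we have $A\notin L^2(\R^2)$, so a direct Cauchy--Schwarz against $\nabla\cdot\LA f\RA_{\delta_\e}\in L^2$ is unavailable, and the cutoff-and-split strategy is forced; the impossibility of making $\sqrt{\ln R}$ and $1/R$ simultaneously arbitrarily small is the origin of the factor $(\e/\delta_\e)^\theta$ missing from the naive bound.
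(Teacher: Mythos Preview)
Your proof is correct but takes a genuinely different route from the paper's.

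The paper works on $\LA f\RA_{\delta_\e}$ rather than on $\LA\zeta\RA_{\delta_\e}$: from $\nabla\times\LA f\RA_{\delta_\e}=0$ and $\|\nabla\cdot\LA f\RA_{\delta_\e}\|_{L^2}\le(\e/\delta_\e)\calP$, elliptic regularity (via Fourier, since $\LA f\RA_{\delta_\e}=\nabla\psi$ with $\Delta\psi=\nabla\cdot\LA f\RA_{\delta_\e}$) gives $\|\nabla\LA f\RA_{\delta_\e}\|_{L^2}\le C(\e/\delta_\e)\calP$. Then Gagliardo--Nirenberg in $\R^2$ interpolates between $\|\LA f\RA_{\delta_\e}\|_{L^{4/3}+L^2}\le\calP$ and this gradient bound to place $\LA f\RA_{\delta_\e}$ in $L^{4/(3\theta)}+L^{2/\theta}$ with norm $\le\calP(\e/\delta_\e)^{1-\theta}$, after which H\"older against $\LA\zeta\RA_{\delta_\e}\in L^r$ for all $r$ finishes in one line.

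You instead Hodge-decompose the \emph{test} vector field $\LA\zeta\RA_{\delta_\e}$, kill the $\nabla^\perp B$ piece using $\nabla\times f=0$, integrate by parts against the $\nabla A$ piece with a cutoff at scale $R$, and optimize $R$. This works, and it has the mild advantage of sidestepping the Gagliardo--Nirenberg inequality on the sum space $L^{4/3}+L^2$ (which requires a small amount of care). The cost is a substantially longer argument with explicit Newtonian-kernel decay estimates and a free parameter to tune. The paper's $\theta$ is the Gagliardo--Nirenberg interpolation parameter; your $\theta$ emerges from the $\sqrt{\ln R}$ versus $1/R$ tradeoff, which is essentially the same interpolation phenomenon seen through real-space cutoffs rather than Lebesgue-space exponents. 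Note also that $\sqrt{\ln R}=\sqrt{|\ln\e|/2}$ with your choice $R=\e^{-1/2}$ is bounded by $|\ln\e|\le C\E_\e(u)\le\calP$, so it can be absorbed directly into $\calP$ without even invoking the ``slower than any power'' observation.
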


\begin{proof}

By \eqref{fgequation}, 
\[
\nabla \cdot\LA f  \RA_{\delta_\e} 
= 
\LA  \nabla \cdot f  \RA_{\delta_\e} 
= \frac 12 \LA \partial_t (|u|^2-1) \RA_{\delta_\e} = 
{1\over \delta_\e} \left. { |u|^2 - 1 \over 2} \right|_{t - \delta_\e}^t. 
\]
Hence
$\|\nabla\cdot \LA f  \RA_{\delta_\e} \|_{L^2}  \le  \frac\e{\delta_\e}\calP$, by \eqref{gstab.ref3}.
Also, since $\nabla\times \LA  f \RA_{\delta_\e} = \LA \nabla\times f \RA_{\delta_\e} = 0$, elliptic regularity implies that 
$\| \nabla \LA f \RA_{\delta_\e} \|_{L^2} \le   C\| \nabla\cdot \LA f \RA_{\delta_\e} \|_{L^2}$.
Then we can apply the Sobolev-Nirenberg-Gagliardo inequality 
to find that for $\theta\in (0,1)$,
\[
\|  \LA f  \RA_{\delta_\e} \|_{L^{\frac 4{3\theta}} + L^{\frac 2\theta}} \le \
\|  \LA f  \RA_{\delta_\e} \|_{L^{\frac 43} + L^2}^\theta 
\| \nabla  \LA f  \RA_{\delta_\e} \|_{L^2}^{1-\theta}  \ \le  \ \calP^\theta (\frac \e{\delta_\e}\calP)^{1-\theta}. 
\]
Since
$\|\zeta\|_{L^r} \le \calP$ for every $r$, the same holds for $\LA \zeta\RA_{\delta_\e}$, and we conclude
by H\"older's inequality that
\[
\LV \Xi_{3,1} \RV   
\ \le
\calP \ \|  \LA f  \RA_{\delta_\e} \|_{L^{\frac 4{3\theta}} + L^{\frac 2\theta}} \ 
\le \  \calP (\frac\e{\delta_\e})^{1-\theta}.
\]
\end{proof}

Finally, we consider $\Xi_{3.2}$.

\begin{lemma} For all $\delta_\e \leq t \leq \tau_1$ we have 
\begin{equation}  \label{chi4bound}
\LV \Xi_{3,2} \RV
\le
\e^{1/2}\calP .
\end{equation}
\end{lemma}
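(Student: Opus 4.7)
The starting point is the elementary identity $\Xi_{3,2} = \LA\Xi_3\RA_{\delta_\e} - \Xi_{3,1}$, so in view of the bound $|\Xi_{3,1}| \le \e^{1/2}\calP$ from \eqref{chi3bound} it suffices to show $|\LA\Xi_3\RA_{\delta_\e}| \le \e^{1/2}\calP$. The plan is to push time and space derivatives off $f$ and onto the small factor $1-|u|^2$ via two integrations by parts. First, for each $s\in[t-\delta_\e,t]$, perform a Hodge decomposition $\zeta(s) = \nabla\alpha(s) + \nabla^\perp\beta(s)$; standard elliptic estimates combined with \eqref{zetaboundLp} yield $\|\alpha(s)\|_{L^2}+\|\beta(s)\|_{L^2}\le \calP$ (with suitable decay so that boundary contributions at infinity may be discarded as in the proof of Lemma~\ref{poissonX}). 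Because $\nabla\times f = 0$ by \eqref{hodge}, the $\nabla^\perp\beta$ part drops after integration by parts, while the divergence equation $\nabla\cdot f = \frac12\partial_s(|u|^2-1)$ from \eqref{fgequation} gives
\[
\int \zeta(s)\cdot f(s)\,dx \ =\ -\frac 12 \int \alpha(s)\,\partial_s(|u(s)|^2-1)\,dx.
\]

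Next, integrate by parts in $s$:
\[
\LA\Xi_3\RA_{\delta_\e}
\ =\ -\frac 1{2\delta_\e}\Bigl[\int\alpha(s)(|u(s)|^2-1)\,dx\Bigr]_{s=t-\delta_\e}^{s=t}
\ +\ \frac 1{2\delta_\e}\int_{t-\delta_\e}^{t}\!\int\partial_s\alpha(s)(|u(s)|^2-1)\,dx\,ds.
\]
The boundary contribution is bounded by $\delta_\e^{-1}\|\alpha\|_{L^2}\|1-|u|^2\|_{L^2}\le \e^{3/5}\calP$ using \eqref{gstab.ref3} and $\delta_\e=\e^{2/5}$, which is acceptable since $\e^{3/5}\le \e^{1/2}$. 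The interior term is controlled by Cauchy--Schwarz as
\[
\frac{\|1-|u|^2\|_{L^\infty_s L^2_x}}{2\delta_\e}\int_{t-\delta_\e}^{t}\|\partial_s\alpha(s)\|_{L^2}\,ds
\ \le\ \frac{\e\calP}{\delta_\e}\int_{t-\delta_\e}^{t}\|\partial_s\alpha(s)\|_{L^2}\,ds,
\]
so it remains to bound the time-integral of $\|\partial_s\alpha(s)\|_{L^2}$ by $\delta_\e\calP$; by elliptic regularity (applied on the compact support of $\zeta$) this reduces to bounding $\int_{t-\delta_\e}^t\|\partial_s\zeta(s)\|_{L^2}\,ds$ by $\delta_\e\calP$.

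The main obstacle is this last bound, because the definition \eqref{zeta.def} of $\zeta$ involves the unit vector $\eta_j/|\eta_j|$, whose time derivative behaves like $|\dot\eta_j|/|\eta_j|$ and is not pointwise bounded near zeros of $|\eta_j|$. The plan to bypass this is to split the index set according to whether $|\eta_j(t)|$ exceeds the threshold $\mu_\e := \delta_\e\cdot \e^{1/9}$. For indices with $|\eta_j(t)|>\mu_\e$, the Lipschitz bound \eqref{doteta.bound1} guarantees that $|\eta_j(s)|$ stays comparable to $|\eta_j(t)|$ throughout $[t-\delta_\e,t]$, so the angular speed $|\partial_s(\eta_j/|\eta_j|)|$ is bounded by $\calP$, and the desired integral bound on $\partial_s\zeta$ follows together with the already controlled time derivatives of $a_j, \rho_a, \xi_j,$ and $j(u_*)$ (the latter handled via \eqref{jstarLipschitz} and the Lipschitz dependence of $\xi$ on $t$ inherited from the localization Proposition~\ref{P2}). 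For indices with $|\eta_j(t)|\le \mu_\e$, one drops the unit vector entirely, absorbing the contribution directly into the error: its size is no worse than $n\mu_\e \cdot \|T_{j,6}\|\lesssim \e^{1/2}\calP$, which fits inside the target bound. Assembling the pieces gives $|\LA\Xi_3\RA_{\delta_\e}|\le \e^{3/5}\calP + \e\calP \le \e^{1/2}\calP$, completing the proof.
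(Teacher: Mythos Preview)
Your route is quite different from the paper's, and it has real gaps. The paper never integrates by parts in time; instead it bounds $\Xi_{3,2}$ directly by H\"older as
\[
|\Xi_{3,2}|\le\Bigl(\sup_{s}\|\zeta(s)-\LA\zeta\RA_{\delta_\e}\|_{L^4\cap L^2}\Bigr)\Bigl(\sup_{s}\|f(s)-\LA f\RA_{\delta_\e}\|_{L^{4/3}+L^2}\Bigr),
\]
extracts $\e^{1/6}\calP$ from the second factor via the crude bound $2\sup_s\|f(s)\|$ together with \eqref{fgLqp} and $\eta\le\e^{1/3}\calP$, and extracts $\delta_\e\calP$ from the first by showing $\|\zeta(s)-\zeta(s')\|\le\calP\,\delta_\e$ through the Lipschitz dependence of $\Phi_j(\cdot,s)$ and $j(u_*(\xi(s)))$ on $s$; see \eqref{Lambda.defs}--\eqref{zd}. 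No time derivative of $\zeta$ (or of any potential built from it) is ever taken, which is what sidesteps the $\eta_j/|\eta_j|$ issue.

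Your argument breaks in two places. First, the scalar potential $\alpha$ solving $\Delta\alpha=\nabla\cdot\zeta$ decays only like $|x|^{-1}$ at infinity (this is exactly the decay recorded in \eqref{ab.decay}, since $\int\zeta$ has no reason to vanish), so $\alpha\notin L^2(\R^2)$; the only global control available on $|u|^2-1$ is $L^2$ via \eqref{gstab.ref3}, and hence neither your boundary term $\int\alpha(|u|^2-1)$ nor your interior term is bounded by the Cauchy--Schwarz pairing you invoke. Second, your treatment of indices with $|\eta_j(t)|\le\mu_\e$ is not correct: the $j$th summand in $T_6$ is $\tfrac{\eta_j}{|\eta_j|}\cdot T_{j,6}$, which has size $|T_{j,6}|$ with \emph{no} factor of $|\eta_j|$ or $\mu_\e$ --- the unit vector has norm one --- and the available single-index bound from Section~\ref{S.etadot} is only $|T_{j,6}|\lesssim\e^{1/6}\calP$, far short of $\e^{1/2}\calP$. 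Moreover, even for the ``large'' indices your threshold $\mu_\e=\delta_\e\e^{1/9}$ equals the maximal drift of $|\eta_j|$ over $[t-\delta_\e,t]$ permitted by \eqref{doteta.bound1}, so you cannot conclude that $|\eta_j(s)|$ stays bounded away from zero on that interval, and the angular speed $|\partial_s(\eta_j/|\eta_j|)|$ is certainly not bounded by $\calP$.
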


\begin{proof}
{ 1}. First, note that
\begin{equation}
|\Xi_{3,2}| 
\le 
\Big(\sup_{s\in [t-\delta_\e, t]}\ \| \zeta(s) - \LA \zeta\RA_{\delta_\e}\|_{L^4 \cap L^2} \Big)
\Big( \sup_{s\in [t-\delta_\e, t]}\ \|  f(s) - \LA f\RA_{\delta_\e} \|_{L^{4\over3} + L^2}\Big).
\label{Xi32a}
\end{equation}
The term involving $\zeta - \LA \zeta \RA_{\delta_\e}$ is more complicated, and 
we consider it first.
Clearly
\[
\sup_{s\in [t-\delta_\e, t]}\ \| \zeta(s) - \LA \zeta\RA_{\delta_\e}\|_{L^4 \cap L^2} 
\le
\sup_{s, s'\in [t-\delta_\e, t]}\   \| \zeta(s) -  \zeta(s')\|_{L^4 \cap L^2}  .
\]
We henceforth assume that  $0 \le t-\delta_\e \le s, s' \le t \le \tau_1$.
From the definition \eqref{zeta.def} of $\zeta$,
\begin{align}
\zeta_k(s) - \zeta_k(s')
& = \sum_j \J_{kl} \p_{x_l x_m} \LB \frac{\eta_j}{|\eta_j|}\cdot ( \Phi_j (s) - \Phi_j (s') )\RB  \ 
j_m(u_* )(s) 
\nonumber\\
& \quad  + \sum_j \J_{kl} \p_{x_l x_m}(\frac{\eta_j}{|\eta_j|}\cdot  \Phi_j) (s')  \ 
\LB j_m(u_* )(s) - j_m(u_* )(s') \RB 
\nonumber \\
& = \Lambda_1 + \Lambda_2.
\label{Lambda.defs}\end{align}
Recall that $\Phi_j(s) = \varphi(\cdot-a_j(s), \rho_{a(s)})$ and   $u_* = u_*(\cdot, \xi(s), d)$.
So we  need to establish several facts 
about these various parameters and their effect on support of the $\Phi_j$'s.

We first note that $$\mbox{supp}\, \nabla^2 \Phi_j(s) \subseteq A_{\rho_{a(s)}}^j := B_{2 \rho_{a(s)}}(a_j(s)) \backslash  B_{ \rho_{a(s)}}(a_j(s)).$$
Next, since $|\dot a_j| \le  \calP$ for every $j$,
 it follows from \eqref{rhoRbds} that
\begin{equation}
|a_j(s) - a_j(s')| \ \le \ \calP |s-s'| \  \le \ \calP \delta_\e \  \le \  \frac 1{100} \inf_{\sigma>0} \rho_{a(\sigma)}
\le \frac 1 {100}\rho_{a(t)}.
\label{a.change}\end{equation}
It follows that (for the  same $\calP$ as in \eqref{a.change})
\begin{equation}
|\rho_{a(s)} - \rho_{a(s')} | \ \le \  \calP \delta_\e . 
\label{rho.change}\end{equation}
This, in turn, implies that 
\begin{equation}
A_{\rho_{a(s)}}^j \subseteq \widetilde{A}^j_t :=
B_{3 \rho_{a(t)}}(\xi_j(t)) \setminus B_{\frac12 \rho_{a(t)}}(\xi_j(t)) .
\label{supports.bound}\end{equation}
We also infer from \eqref{futureref}, \eqref{doteta.bound1},
\eqref{a.change}, that
\begin{equation}
\sum \LV \xi_j (s) - \xi_j (s') \RV \ \le
\sum \left( | \eta_j(s) - \eta_j(s')| + |a_j(s) - a_j(s')| \right) + \e \calP
\le \ \calP \delta_\e.
\label{xi.change}\end{equation}
Finally, we have for any $s,s' \in [t- \delta_\e, t]$, using \eqref{rho.change},
\begin{equation}  \label{estimateson1overrhoa1}
\LV {1\over \rho_{a(s)}^2} -  {1\over \rho_{a(s')}^2} \RV 
\lesssim \frac{|\rho_{a(s)}-\rho_{a(s')}|} {\rho_{a(t)}^3} \le
\calP \delta_\e.
\end{equation}


{2.}
We now consider $\Lambda_1$.  If we temporarily write 
$S :=  \{(x,\rho) : |\rho - \rho_{a(t)}| \le \rho_{a(t)}/10\}$, then
we deduce from
the definition of $\Phi$, the mean value theorem, and
\eqref{a.change}, \eqref{rho.change} that 
\begin{align*}
\LN 
\p_{x_l x_m} ( \Phi_j (s) - \Phi_j (s') )  \ 
\RN_{L^\infty}
& \leq  \LN \nabla_x^2 \nabla_{x,\rho} \varphi
\RN_{L^\infty (S)}
( |a_j(s) - a_j(s')| + |\rho_{a(s)} - \rho_{a(s')}|) \\
& \lesssim {n \over \rho_{a(t)}^3} \delta_\e  \ \ =\  \calP \delta_\e.
\end{align*}
From \eqref{supports.bound} we infer that 
$| j(u_*)(\xi(s))| 
\lesssim { n \over \rho_{a(t)}}$ on the support of $ \Lambda_1$,
and since the support of $\Lambda_1$ has measure bounded by $C n \rho_{a(t)}^2$,
we conclude that
\begin{align}  \label{t2biiestI}
\LN \Lambda_1 \RN_{L^4} \leq  {C n^{2} \over \rho_{a(t)}^4}\left( C n \rho_{a(t)}^2\right)^{1\over4}
\delta_\e =  \calP\delta_\e.
\end{align}
Next we consider $\Lambda_2$. 
Since $\LN \sum_j \p_{x_l x_m}  \Phi_j  \RN_{ L^\infty}  \leq {C \over \rho_{a(t)}}$, and
noting that $\mbox{supp}\, \Lambda_2$ has measure at most $Cn\rho_{a(t)}^2$,
we use
H\"older's inequality to estimate
\begin{equation} \label{lambda2est1}
\LN \Lambda_2 \RN_{L^4} \le
\frac C{\rho_{a(t)}}
\LN j (u_* )(s) - j(u_* )(s') \RN_{L^\infty( \cup_j \widetilde{A}_t^j )}
(C n \rho_{a(t)}^2)^{1\over4}.
\end{equation}
For the $L^\infty$ bound we have 
\begin{equation}
\LN j (u_* )(s) - j(u_* )(s') \RN_{L^\infty( \cup_j \widetilde{A}_t^j )} 
 \stackrel{\eqref{jstarLipschitz},\eqref{supports.bound}}{\leq} { C \over  \rho^2_{a(t)} } \sum_{j=1}^n \LV \xi_j (s) - \xi_j (s') \RV ;
  \label{jstardifflambda2}
\end{equation}
consequently, we deduce from \eqref{xi.change} that
\begin{equation}
\LN \Lambda_2 \RN_{L^4} \le \calP \delta_\e.
\label{t2biiestII}
\end{equation}
Note also that by H\"older's inequality and \eqref{supports.bound},
\[
\| \zeta(s) - \zeta(s')\|_{L^2} \le | \cup \widetilde A_t^j |^{1/4} \| \zeta(s) - \zeta(s')\|_{L^4}
\le \calP \| \zeta(s) - \zeta(s')\|_{L^4}.
\]
So it follows from \eqref{t2biiestI} and \eqref{t2biiestII} that
\begin{equation}
\| \zeta(s) -  \zeta(s')\|_{L^4 \cap L^2}  \le  \calP \delta_\e
\ \ \ \qquad\mbox{ for every }s,s'\in[t-\delta, t].
\label{zd}\end{equation}

{3}. For the other term in \eqref{Xi32a}, we simply note that for $t<\tau_1$,
\begin{align*}
\sup_{s\in [t-\delta_\e, t]}\ \|  f(s) - \LA f\RA_{\delta_\e} \|_{L^{4\over3} + L^2}
&\le
2 \sup_{s\in [t-\delta_\e, t]}\ \|  f(s)  \|_{L^{4\over3} + L^2}\\
& \overset{\eqref{fgLqp}, \eqref{tau1.def}}
\lesssim
\e^{1/6}\,  \calP. 
\end{align*}
The conclusion of the lemma follows from this and \eqref{zd}, recalling that $\delta_\e  = \e^{2/5}$.
\end{proof}

\begin{proof}[proof of Proposition~\ref{currentbounds}]

From \eqref{doteta.split2} we have
\[
\frac d{dt}\LA \eta\RA_{\delta_\e} = 
\LA T_1\RA_{\delta_\e}
+\ldots + 
\LA T_7\RA_{\delta_\e}
\]
using the notation of \eqref{doteta.split2}.
In view of \eqref{T1.est}-\eqref{T45.est},
\begin{align*}
\sum_{i=1}^5 | \LA T_i \RA_{\delta_\e} | 
& \le
\sum_{i=1}^5  \LA |T_i| \RA_{\delta_\e}  
\ \lesssim
\LA\frac {n}{\rho_{a(\cdot)}^2}  \eta + \e^{1\over 2}\calP\RA_{\delta_\e} \\
& \lesssim 
\frac {n}{\rho_{a(t)}^2}\LA \eta\RA_{\delta_\e} + \sup_{s\in[t-\delta_\e,t]} \LV {1\over \rho_{a(s)}^2} - {1\over \rho_{a(t)}^2} \RV \sup_{r \in [t-\delta_\e,t]} \LV \eta (r) \RV + \e^{1\over 2}\calP \\
& \stackrel{\eqref{estimateson1overrhoa1}}{\lesssim} \frac {n}{\rho_{a(t)}^2}\LA \eta\RA_{\delta_\e} +  \e^{1\over2}\calP.
\end{align*}
Finally, from \eqref{T6decomp}, \eqref{chi1bound}, \eqref{chi2bound}, \eqref{chi3bound} and \eqref{chi4bound} we have 
\begin{equation} 
 | \LA T_6 \RA_{\delta_\e} | +
  | \LA T_7 \RA_{\delta_\e} | \ \lesssim \ \e^{1/2}\calP,
\label{T6.avgest}\end{equation}
recalling that $| \LA T_7 \RA_{\delta_\e} |$ is bounded in exactly
the same way as $ | \LA T_6 \RA_{\delta_\e} |$. 
\end{proof}


\section{Completion of Gronwall argument} \label{S.conc}

\begin{proof}[conclusion of the proof  of Theorem \ref{thmdynamics}]

{1}. We first claim 

\begin{equation}
\eta(t) \le \frac 12 \e^{1/ 3}  \quad \mbox{ for } \quad 0 \le t \le\min \{ \tau_1,\tau_\star\},
\label{dtsts}\end{equation}
where $\tau_\star$ was defined in \eqref{Tstar.def}, and we recall that 
\[
\tau_1 := \inf\{ \tau>0 : \lambda(\tau) >  \e^{1/3}  \}
\quad\mbox{ for }\ \ \lambda(t) := \| J(u(t)) - \pi \sum \delta_{a_i(t)} \|_{X_{\ln}^*}.
\]
We have assumed in \eqref{t1.h1} that $\lambda(0) \le \e^{1/2}$, and it follows 
from
\eqref{eta.bound} that
$\eta(0) \le   \e^{1/2}\calP$. 
Taking $\e_0$ sufficiently small (see \eqref{Psmall}),  we then see from \eqref{doteta.bound1} that
\begin{equation}
\eta(t) \le  \frac 12 \e^{2/5} + t \  \e^{1/ 9}  \qquad \mbox{ for }0\le t \le \tau_1
\label{eta.bound0}\end{equation}
which in particular implies that \eqref{dtsts} holds if $\tau_1 \le \delta_\e = \e^{2/5}$.
Thus we may assume that $\tau_1\ge \delta_\e$. Then it follows from 
\eqref{eta.bound0} that $\eta(t)\le \delta_\e$ for $t\le \delta_\e$, and hence that
$\LA \eta \RA_{\delta_\e}(\delta_\e) \le \delta_\e$.

Now, recalling the differential inequality \eqref{avgcurrentboundsresult}
satisfied by $\LA \eta \RA_{\delta_\e}$,
Gr\"onwall's inequality implies that
\begin{equation}
\LA \eta \RA_{\delta_\e}(t) \ 
\le \exp \LB C n \int_{\delta_\e}^t \rho_{a(s)}^{-2} ds \RB \left ( \LA \eta \RA_{\delta_\e}(\delta_\e) 
+ \e^{1/2}\calP
\right)
\ \le \ 2 \delta_\e \exp \LB C n \int_{0}^t \rho_{a(s)}^{-2} ds \RB 
\label{Grnwll}
\end{equation}
for $t\in [\delta_\e, \tau_1]$. But the definition \eqref{Tstar.def} of $\tau_\star$
is exactly chosen (once the constants are adjusted 
correctly) so that the right-hand side of \eqref{Grnwll}
is less than $\frac 14 \e^{1/3}$ when $0 \le t \le \tau_\star$,
and then \eqref{dtsts} follows from our estimate \eqref{lipschitzetaestimate}
of $\eta - \LA \eta\RA_{\delta_\e}$.

{2}.
Next, from \eqref{xis.def}, the definition of the $X_{\ln}^*$ norm, the characterization 
of the $\dot{W}^{-1,1}$ norm as the ``length of a minimal connection" (see \cite{BCL}),
and \eqref{eta.rocks1},
we see that for $0\le t \le \tau_1$, 
\begin{align*}
\lambda(t) 
&\le  \| \pi \sum(\delta_{\xi_i(t)}- \delta_{a_i(t)}) \|_{X_{\ln}^*} + 
\e\calP  \\
&\le   \| \pi \sum(\delta_{\xi_i(t)}- \delta_{a_i(t)}) \|_{\dot{W}^{-1,1}(\R^2)}  + \e\calP\\
&= \pi \sum  |\xi_i(t) - a_i(t)| + \e\calP \\
& \le\   \eta(t) +\e\calP.
\end{align*}
Also, it is a consequence of 
\eqref{XcJ} that $\lambda$ is continuous. 
As a result, 
\eqref{dtsts} easily implies that $\tau_1\ge \tau_\star$, since if not, we would find that
that $\lambda(t) <   \e^{1/3}$ for $0\le t \le \sigma$, for some $\sigma>\tau_1$, contradicting
the definition of $\tau_1$.

Since
the inequality $\tau_1\ge \tau_\star$ is
a restatement of the  conclusion
\eqref{t1.c1} of the theorem, the proof is complete. 

\end{proof}

\section{{hydrodynamic limit}}\label{S:hydro}

{In this section we complete the proof of Theorem~\ref{T1},
and we
establish in Theorem \ref{T.random}  {\em almost sure} convergence for
certain sequences of initial data with random vortex locations.}

We will need the following estimate.

\begin{lemma} \label{interp.X.W} Assume that $\mu_1$ and $\mu_2$ are two probability
measures on $\R^2$ and that 
\[
\int|x|^2 d\mu_i \le M, \qquad i=1,2.
\]
Then 
\begin{equation} \label{weaknorminterp}
\| \mu_2 - \mu_1\|_{X^*_{\ln}} \le  C \sqrt{M} \| \mu_2 - \mu_1\|_{W^{-2,1}(\R^2)}^{1/4}.
\end{equation}
\end{lemma}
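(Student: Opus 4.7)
The plan is to fix a test function $\phi \in C_c^1(\R^2)$ with $\|\phi\|_{X_{\ln}} \le 1$ and bound $\int \phi\,d\mu$ where $\mu := \mu_2 - \mu_1$. Since $\mu$ has total mass zero I may assume $\phi(0) = 0$, and combined with $\|\nabla\phi\|_\infty \le 1$ this yields the pointwise bound $|\phi(x)| \le |x|$. Write $r := \|\mu\|_{W^{-2,1}}$.

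The strategy is truncate-then-mollify. Split $\phi = \chi_R \phi + (1-\chi_R)\phi$ using a cutoff $\chi_R$ as in \eqref{chiRdef1}. The second-moment hypothesis immediately handles the far part,
\[
\Big|\int (1-\chi_R)\phi\,d\mu\Big| \le \int_{|x|\ge R} |x|\,d|\mu| \le \frac{1}{R}\int |x|^2\,d|\mu| \le \frac{2M}{R}.
\]
For the near part, $\tilde\phi := \chi_R \phi$ satisfies $\|\tilde\phi\|_\infty \le 2R$ and $\|\nabla\tilde\phi\|_\infty \le C$, so convolving with a standard mollifier $\eta_\delta$ at scale $\delta \le R$ produces $\tilde\phi_\delta \in C_c^\infty(\R^2)$ with $\|\tilde\phi - \tilde\phi_\delta\|_\infty \le C\delta$ and $\|\tilde\phi_\delta\|_{W^{2,\infty}} \le C(R + \delta^{-1})$. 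Testing $\tilde\phi_\delta$ against $\mu$ via the $W^{-2,1}$ norm and absorbing the mollification error using $|\mu|(\R^2) \le 2$, I arrive at
\[
\Big|\int \phi\,d\mu\Big| \le C\Big(\frac{M}{R} + \delta + (R+\delta^{-1})r\Big).
\]

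Choosing $\delta = r^{1/2}$ and $R = (M/r)^{1/2}$ balances all four terms (under the harmless normalization $M \gtrsim 1$) and yields $|\int \phi\,d\mu| \le C\sqrt{Mr}$. To recover the stated exponent $1/4$, I distinguish regimes: when $r \le 1$, the inequality $\sqrt{r}\le r^{1/4}$ upgrades $C\sqrt{Mr}$ directly to $C\sqrt{M}\,r^{1/4}$; when $r > 1$, the Cauchy--Schwarz bound
\[
\Big|\int \phi\,d\mu\Big| \le \Big(\int |x|^2\,d|\mu|\Big)^{1/2}(|\mu|(\R^2))^{1/2} \le 2\sqrt{M}
\]
is already dominated by $\sqrt{M}\,r^{1/4}$. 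The main subtlety is that the truncate-and-mollify optimization naturally produces the exponent $1/2$ rather than $1/4$; the weaker exponent in the statement is precisely what allows the trivial Cauchy--Schwarz bound to handle the large-$r$ regime seamlessly, so no heavier machinery (interpolation, fractional Sobolev duality, or a finer use of the logarithmic weight) is required.
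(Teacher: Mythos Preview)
Your proof is correct and follows the same truncate-then-mollify scheme as the paper. The only organizational difference is that the paper first isolates the general interpolation $\|\nu\|_{W^{-1,1}} \le C(\|\nu\|_{W^{-2,1}}\,|\nu|(\R^2))^{1/2}$ via mollification, and then combines it with the truncation estimate $\|\mu\|_{X_{\ln}^*} \lesssim (R+1)\|\mu\|_{W^{-1,1}} + M/R$, optimizing in $R$ to land directly on the exponent $1/4$; you instead truncate first and mollify the compactly supported piece in one pass, bounding its $W^{2,\infty}$ norm by $C(R+\delta^{-1})$. Your single-pass version is in fact slightly sharper --- it yields $C\sqrt{M}\,\|\mu\|_{W^{-2,1}}^{1/2}$ in the small-norm regime --- which you then deliberately weaken to the stated $1/4$.
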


\begin{proof}
First we recall that for any signed measure $\nu$ on $\R^2$ with finite total mass,
\begin{equation}
\| \nu \|_{W^{-1,1}(\R^2)} \le C \sqrt{ \| \nu \|_{W^{-2,1}(\R^2)}  \ |\nu|(\R^2)}.
\label{interp00}\end{equation}
Indeed, taking any smooth, compactly supported $\phi$, and letting $\eta_\e$ denote
a standard mollifier supported in a ball of radius $\e$,
\[
\int \phi d\nu = \int \eta_\e*\phi d\nu +  \int (\phi - \eta_\e*\phi) d\nu \le
\| \eta_\e*\phi - \phi\|_{L^\infty} |\nu|(\R^2) + \| \eta_\e*\phi\|_{W^{2,\infty}} \|\nu\|_{W^{-2,1}}.
\]
The claim \eqref{interp00} follows by noting that
\[
\| \eta_\e*\phi - \phi\|_{L^\infty} \le \e \| \phi\|_{W^{1,\infty}},
\qquad
 \| \eta_\e*\phi\|_{W^{2,\infty}} \le \frac C \e  \| \phi\|_{W^{1,\infty}}\mbox{ for } \e \le 1
\]
and then selecting $\e := ( \|\nu\|_{W^{-2,1}} / |\nu|(\R^2) )^{1/2} $, which is clearly bounded by $1$.

Now let $\phi$ be any compactly supported function such that
$\| \nabla \phi\|_{L^\infty}  \le \|\phi \|_{X_{\ln}^*} \le 1$.
Let $(\chi_R)_{R>0}$ be a family of functions satisfying \eqref{chiRdef1} and \eqref{chiRdef2}.
Note that every constant is $(\mu_2-\mu_1)$-integrable, and integrates to zero, so
\[
\int \phi d(\mu_2-\mu_1) = \int \tilde \phi d(\mu_2-\mu_1), \qquad  \mbox{ for }\tilde \phi(x) := \phi(x) - \phi(0).
\]
Also, $\| \chi_R \tilde \phi \|_{W^{1,\infty}} \le C{(R+1)}$, and 
$|(1-\chi_R)\tilde \phi|(x) \ \le R^{-1}|x|^2$. Thus
\begin{align*}
\int \phi d(\mu_2-\mu_1)
&=
\int \chi_R \tilde \phi d(\mu_2-\mu_1)
+
\int  (1-\chi_R) \tilde \phi d(\mu_2-\mu_1)\\
&\le
C{(R+1)}\| \mu_2-\mu_1\|_{W^{-1,1}(\R^2)}
+
\frac 1R \int|x|^2 d(\mu_1 + \mu_2) 
\\
&\le 
C{(R+1)}\| \mu_2 -\mu_1\|_{W^{-1,1}(\R^2)}+ \frac {2M}R.
\end{align*}
The proof is concluded by taking
 the supremum over $\phi$ as above, optimizing over $R$,
 and using \eqref{interp00} and the fact that $\mu_1,\mu_2$ are
 probability measures.
\end{proof}

\subsection{deterministic initial data}

%
%
%

We can now complete the

\begin{proof}[proof of Theorem~\ref{T1}]
1. We first claim that the initial data
\begin{equation}
u_\e^0 = \prod_{j=1}^{n_\e}  \phi_\e(x - a_j^\e), \qquad \mbox{$\phi_\e$ defined in \eqref{modelvortex}}
\label{data.bis}\end{equation}
satisfies the hypotheses \eqref{t1.h1}, \eqref{t1.h2} of Theorem \ref{thmdynamics}.
This is rather standard without the precise error estimates needed here, 
and these sorts of error estimates are checked in detail
in Lemma~14 in \cite{JSp2}. Indeed,  \eqref{t1.h1} follows
directly from arguments of \cite{JSp2}. To prove \eqref{t1.h2},
we appeal to
step 1 of the proof of Proposition~\ref{Propgammastability} (noting that $u_\e^0 = u_*$ outside $\cup B_{\sqrt \e}(a_j)$), then use \eqref{FiniteWapprox}  and 
\eqref{gamma.def}  to find that
\begin{align*}
\Sigma(u_\e^0; a_j^\e) & \leq
 \LV \lim_{R\to\infty} \int_{B_R\backslash (\cup B_{\sqrt{\e}}(a_j) )} {1\over 2} \LV \nabla u_* \RV^2  dx
 - \LB \pi D^2 \ln R + n \pi \ln {1\over \sqrt{\e}}   +W (a) \RB \RV   \\
& \quad  +  \LV \sum_{j=1}^{n_\e} \int_{B_{\sqrt{\e}}(a_j)} e_\e(u_\e^0)) \ dx -  \LC  \pi \ln{\sqrt{\e} \over {\e}} + \gamma \RC \RV  
 \leq C \e ( 1 + {n_\e^2 \over \rho^2_{a_j^\e}} ) \leq  \e^{1\over 2},
\end{align*}
which is \eqref{t1.h2}.

2. 
It  now follows by Theorem~\ref{thmdynamics} the solution $u_\e(t,x)$ of \eqref{nls}
with initial data \eqref{data.bis}
satisfies 
\begin{equation}
\Big\|  J(u(t)) - \sum_{j=1}^n \pi  \delta_{a_j(t)} \Big\|_{X^*_{\ln}} 
\leq \e^{1\over 3}
\mbox{\ \ \ for  $0<t<\tau_\star = \sup\{ T  > 0: \frac C n \int_0^T \rho^{-2}_{a(t)} dt \leq \logep \}$,}
\label{recall.t1c1}\end{equation}
where $a(\cdot)$ solves \eqref{PV2} with initial data $(a^\e_1,\ldots, a^\e_n)$.  We claim that for every $T>0$, there exists some $\e_T>0$ such that
$\tau_* \ge \frac T {2\pi n_\e}$ for $0< \e < \e_T$.
The proof of this is where we need the restriction $n_\e^2 =  o( \ln \logep)$ on the 
number of vortices. Indeed, it follows from 
this assumption and  \eqref{rhoRbds} 
that\footnote{Note that \eqref{rhoRbds} is applicable here, since 
conditions \eqref{Mbounds0} are preserved by the point vortex ODEs.} \[
\rho_{a^{n_\e}(t)} \ge \frac 14 \exp( - M_0 n_\e^2) = \frac 14 \exp[- o(1)\cdot\ln \logep]
= \frac 14 \logep^{-o(1)}
\]
as $\e\to 0$ for all $t$, and the claim easily follows.

3. As noted in Remark \ref{rem:trescale}, if
we define $b^{n_\e}(t) = a(\frac t{2\pi n_\e})$, then $b^{n_\e}(\cdot)$ solves \eqref{ODE},
with initial data satisfying \eqref{Mbounds0} for every $\e$.  
Hence we find from Theorem \ref{Thm.Schochet} that
the sequence
\[
\omega^{n_\e} (t) \  = \  {1\over n_\e} \sum_{j=1}^{n_\e} \delta_{b^{n_\e}_j(t)} \ = \ 
 {1\over n_\e} \sum_{j=1}^{n_\e} \delta_{a^{n}_j(\frac t{2\pi n_\e})}
\]
is precompact in $C([0,T], W^{-2,1})$ for every $T>0$,
and hence by \eqref{weaknorminterp} in $C([0,T], X_{{\ln}^*})$. In addition, any
limit  is a 
weak solution to the 2D equation with initial data $\omega_0 = \operatorname{wk\,lim}_{\e\to 0}\omega^{n_e}(0)$.

On the other hand, recalling the definition of the rescaled vorticity,
\[
\widetilde \omega_{\e}(t, x) := \frac 1{2\pi n_\e}\omega(u_\e)(\frac t{2\pi n_\e}, x) = {1\over \pi n_\e} 
J(u_\e)(\frac t{2\pi n_\e}, x),
\]
we deduce from \eqref{recall.t1c1}
that for any $t\in (0,T]$ and $\e<\e_T$,
\[
\| \widetilde \omega(t) - \omega^{n_\e}(t)\|_{X_{\ln}^*}
=
\frac 1{\pi n_\e} \left\|  J(u_\e(\frac t{2\pi n_\e})) - \pi \sum_{j=1}^{n_\e} \delta_{a^{n}_j(\frac t{2\pi n_\e)})} \right\|_{X_{\ln}^*} \ \le \e^{\frac 13}.
\]
Hence the compactness of $(\widetilde \omega_\e)_{\e\in (0,1]}$ in $C(0,T;X_{\ln}^*)$,
and fact that any limit must be a weak solution  of the Euler equations, follow by the triangle inequality from the corresponding properties of $(\omega^{n_\e})_{\e\in (0,1]}$.
\end{proof}


\subsection{random initial data}

We next consider solutions of \eqref{nls} with initial vortex locations
near points $a = (a_1,\ldots, a_n) \in \R^{2n}$ chosen at random from among those satisfying
\begin{equation}
|a|^2 = \sum_{j=1}^n |a_j|^2 \le n R^2
\label{2momR}\end{equation}
and
\begin{equation}
-\sum_{i\ne j} \ln |a_i-a_j| \le n(n-1) M
\label{Hminus1M}\end{equation}
for some $M$ and $R$. 
We will use the notation
\begin{equation}
A(n,M,R):=\{ a\in \R^{2n} : \eqref{2momR} \mbox{ and }\eqref{Hminus1M} \mbox{ hold} \}.
\label{AnMR}\end{equation}
We will write $P_n$ (suppressing the dependence on $M$ and $R$) to 
denote normalized Lebesgue measure on $A(n,M,R)$, so that
$P_n := \frac 1{|A(n,M,R)|} \calL^n\rest{A(n,M,R)}$.

\medskip

We will prove that for suitable $M$ and $R$, if we choose $a^n$ at random, according to the
probability measure $P_n$,  
then solutions of the Gross-Pitaevskii equation \eqref{nls} with
well-prepared initial data having
vortices at $(a^n_1,\ldots, a^n_n)$
{\em almost
surely} have vorticity governed by the Euler equations in the
limit $n\to \infty$, for
times that are arbitrarily large (after the natural time rescaling),
as long as $n \le\logep^{\frac 1{5+\delta}}$ for some $\delta>0$.
{This is a much larger number of vortices than is allowed in Theorem \ref{T1}.}

A {\em serious limitation} of this result is that if $a_n$ is chosen 
as described above, then the sequence of measures $\frac 1n \sum_{1=i}^n \delta_{a^n_i}$
weakly converges a.s. to a Gaussian; 
see Lemma \ref{L.Jeremy} below.
So in fact the theorem only allows us to obtain the
hydrodynamic limit for this particular initial data, for which 
the solution of the Euler equations is trivial in the sense that it is
independent of $t$.

Nonetheless, we do not know a way to prove Theorem \ref{T.random} without the full strength of
Theorem \ref{thmdynamics}. 

The proof of the theorem
suggests that for
an arbitrary
probability measure in $H^{-1}$, and with finite second moment,
there should be some random choice of sequence of initial
data that would yield a well-behaved hydrodynamic limit
almost surely with the same number $n_\e \le \logep^{\frac 1{5+\delta}}$
of vortices, but we do not know how to prove this.

\begin{theorem}
For every $R>0$, there exists some $M(R)>0$ such that the following holds
for every $M\ge M(R)$:

For $n \in \mathbb N$,
let $a^n \in A(n, M,R)$ be
chosen at random (according to the probability measure $P_n$).
Let $\e_n$ be such that $n \le |\log \e_n|^{\frac 1{5+\delta}}$ for some $\delta>0$, and 
let $u_n$ solve \eqref{nls} for $\e = \e_n$, with initial data
\begin{equation}
u_n^0(x) \ = \ \prod_{j=1}^{n} \phi_{\e_n}( x - a^n_j),
\label{T4data}\end{equation}
for  $\phi_{\e_n}:\R^2\to \C$ described in \eqref{modelvortex}.
Define the {\em current},  {\em vorticity}, and {\em rescaled vorticity}
as in Theorem \ref{T1}.

Then for any $T>0$,
 the rescaled vorticities {are {\em almost surely} precompact in  $C((0,T), X_{\ln}^*)$, and any limit is a  weak solution  of the Euler equations \eqref{Euler}, \eqref{BiotSavart}.}
\label{T.random}
\end{theorem}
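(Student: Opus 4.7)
My plan is to follow the scheme of Theorem \ref{T1}---combining Theorem \ref{thmdynamics} with Theorem \ref{Thm.Schochet}---the new ingredient being a probabilistic argument that uses invariance of $P_n$ under the Hamiltonian flow to verify the time-condition in \eqref{Tstar.def} despite the much larger number of vortices allowed here.  The point vortex system \eqref{PV2} is Hamiltonian with Hamiltonian $H(a)=-\sum_{i<j}\ln|a_i-a_j|$, so Lebesgue measure on $\R^{2n}$ is flow-invariant and $H$ is conserved.  A direct computation (using $v\cdot v^\perp = 0$) shows that $I(a)=\sum_j |a_j|^2$ is also conserved.  Hence the set $A(n,M,R)$ is flow-invariant, and so is the probability measure $P_n$; the rescaled flow generating $b^n(s)=a(s/(2\pi n))$ enjoys the same invariance.

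For each fixed $n$ and $\e=\e_n$, the initial data \eqref{T4data} satisfy \eqref{t1.h1}--\eqref{t1.h2} and the bounds \eqref{Mbounds2} (with $M_0=\max(M,R^2)$) exactly as in Theorem \ref{T1}, so Theorem \ref{thmdynamics} applies.  The time-change $s=2\pi n t$ reduces the requirement $\tau_\star \ge T/(2\pi n)$ to
\[
\int_0^T \rho_{b^n(s)}^{-2}\, ds \ \le\  c\,|\log\e_n|
\]
for an absolute constant $c$.  Since $|\log\e_n|\ge n^{5+\delta}$, it suffices to prove that the event
\[
\calE_n := \Bigl\{ a \in A(n,M,R) : \int_0^T \rho_{b^n(s)}^{-2}\,ds \le n^{5+\delta/2} \Bigr\}
\]
holds almost surely for all sufficiently large $n$.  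Here Remark \ref{rem.tau} is crucial: because $\tau_\star$ involves a time-\emph{average} of $\rho^{-2}$, flow-invariance of $P_n$ combined with Fubini yields
\[
\E_{P_n}\!\int_0^T \rho_{b^n(s)}^{-2}\, ds \ = \ T\,\E_{P_n}\rho_a^{-2},
\]
reducing the whole problem to a single stationary estimate.

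The main technical lemma---the probabilistic heart of the argument---is
\[
\E_{P_n}\rho_a^{-2} \ \le \ C(M,R)\,n^{4}
\]
(up to logarithmic factors in $n$).  To prove it I would bound $\rho_a^{-2}\lesssim 1 + \sum_{i\ne j}|a_i-a_j|^{-2}\mathbf{1}_{|a_i-a_j|\le 1}$ and estimate each pair contribution via the change of variables $u=a_i-a_j$, $v=(a_i+a_j)/2$.  The constraint \eqref{Hminus1M} prevents $|u|$ from being smaller than roughly $\exp(-C n^2 \log n)$ on the relevant set, so polar integration of $|u|^{-2}$ contributes a factor $O(n^2\log n)$ per pair; summing over $\binom{n}{2}$ pairs and controlling the normalization $|A(n,M,R)|^{-1}$ via Lemma \ref{L.Jeremy} yields the claimed bound.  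Markov's inequality then gives $P_n(\calE_n^c) = O(n^{-1-\delta/2}\log n)$, which is summable, so Borel--Cantelli produces $\calE_n$ for all but finitely many $n$, almost surely.

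Once this is in hand, Theorem \ref{thmdynamics} gives $\|J(u_n(t))-\pi\sum_j \delta_{a^n_j(t)}\|_{X^*_{\ln}}\le \e_n^{1/3}$ on $[0,T/(2\pi n)]$ almost surely for all large $n$, and the remainder of the argument copies the end of Theorem \ref{T1}: Theorem \ref{Thm.Schochet} combined with Lemma \ref{interp.X.W} gives precompactness of the empirical measures $\omega^n(t)=\frac{1}{n}\sum_j \delta_{b^n_j(t)}$ in $C([0,T], X^*_{\ln})$ with every subsequential limit a weak solution of \eqref{Euler}--\eqref{BiotSavart}, and the triangle inequality transfers this to $\widetilde\omega_{\e_n}$.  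The single real obstacle is the probabilistic estimate $\E_{P_n}\rho_a^{-2}\lesssim n^4$: the non-integrability of $|a_i-a_j|^{-2}$ in two dimensions means the logarithmic constraint must be exploited quantitatively, and the normalization volume $|A(n,M,R)|$ must be controlled from below---precisely the role played by Lemma \ref{L.Jeremy}.
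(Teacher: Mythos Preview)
Your overall strategy is exactly that of the paper: flow-invariance of $P_n$ under the point vortex Hamiltonian system reduces, via Fubini, the time-averaged condition in \eqref{Tstar.def} to the single stationary estimate $\E_{P_n}\rho_a^{-2}\le Cn^4$, and then Chebyshev/Markov plus Borel--Cantelli close the argument; the rest is copied from the proof of Theorem~\ref{T1}. Your sketch of the stationary estimate (bound $\rho_a^{-2}$ by a pair sum, use the cutoff on $|a_i-a_j|$ forced by the constraints, integrate via the change of variables $u=a_i-a_j$) is also the paper's method, carried out there in Lemmas~\ref{integral0} and~\ref{L.pintegral}.

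One point needs correction. You twice invoke Lemma~\ref{L.Jeremy} to control the normalization $|A(n,M,R)|$ from below, but that lemma does not do this: it concerns almost-sure weak convergence of the empirical measure (under the uniform law on the \emph{ball}) to a Gaussian, and as the paper explicitly notes, it ``is not needed in any of our arguments.'' The lower bound $|A(n,M,R)|\ge \tfrac12 |B^{2n}_{R\sqrt n}|$ is obtained instead by Chebyshev's inequality applied to $F(a)=4\tfrac{n}{n-1}R^2-\tfrac{1}{n(n-1)}\sum_{i\ne j}\ln|a_i-a_j|$, whose expectation over the ball is computed via Lemma~\ref{integral0}; this is where the choice of $M(R)$ enters. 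Once you replace the appeal to Lemma~\ref{L.Jeremy} by this argument, your proof goes through. (Your cutoff $|a_i-a_j|\gtrsim\exp(-Cn^2\log n)$ is slightly weaker than the paper's $\exp(-Cn^2)$, obtained via the observation $r^2-\ln r\ge 0$, but as you note the extra $\log n$ is harmless for summability.)
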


\begin{remark}
In fact, in view of Lemma \ref{L.Jeremy} below, under the hypotheses of 
Theorem \ref{T.random}, the rescaled vorticities 
a.s. satisfy
\[
{\sup_{0<t<T} \left \|\widetilde \omega_{\e_n}(t)  - \frac 1{\pi R^2} \exp( -\frac{|\cdot|^2}{R^2}) \right\|_{X_{\ln}^*} \to 0\qquad\mbox{ as }
n\to \infty.}
\]
\end{remark}




\begin{proof}
1. Let us write $\Phi^n: \R^{2n}\times \R \to \R^{2n}$
to denote the solution operator associated to the point vortex ODEs 
\eqref{PV2}, so that
\[
t\mapsto  \Phi^n(a^n, t)
\qquad
\mbox{ is the solution of  \eqref{PV2} with initial data $a^n\in \R^{2n}$.}
\]
Given $a^n\in \R^{2n}$,  we will write
\[
\tau^n_*(a^n) := 
\sup\left\{ T  > 0: C n \int_0^T \rho^{-2}_{\Phi^n(a^n, t)} dt \leq |\ln \e_n|  \right\} .
\]
Exactly as in the proof of Theorem \ref{T1}, if $a^n \in \R^{2n}$ is any
sequence of initial vortex locations such that
\begin{equation}
 \liminf_{n\to \infty} [ (2 \pi n) \tau^n_*(a^n) ]\ge  T ,
\label{lstaun}   
\end{equation}
and if $u^n$ solves \eqref{nls} with initial data \eqref{T4data},
then the associated rescaled vorticities 
{are  precompact in  $C((0,T), X_{\ln}^*)$, and any limit is a  weak solution  of the Euler equations.}

So we only need to prove that for any $T>0$, condition \eqref{lstaun}
is satisfied a.s. for sequences of initial data, if $a^n$ is chosen
according to the probability measure $P_n$.

For this, by the Borel-Cantelli Lemma, it suffices to show that
\begin{equation}
\sum_{n=1}^\infty P_n (\calB_n) \ < \infty ,
\label{BCLemma}\end{equation}
where $\calB^n$ is the set of bad initial vortex locations, defined by
\[
\calB_n := 
\left\{ a^n \in A(n,M,R) : \
 C n \int_0^{T/2 \pi n} \rho^{-2}_{\Phi^n(a^n,t)} dt > |\ln \e_n|  \right\}.
\]
By Chebyshev's inequality, it  is therefore enough  to show that
there exists some $C, \delta>0$ such that for all sufficiently large $n$, 
\[
P_n(\calB_n) \le \frac 1{|\ln \e_n| }\int_{A(n,M,R)}\left( Cn 
 \int_0^{T/2 \pi n} \rho^{-2}_{\Phi^n(a^n,t)} dt \right) P_n(da^n)
{\le C T n^{-(1+\delta)}.}
 \]
Conservation laws for \eqref{PV2} imply that $\Phi^n$ is a diffeomorphism
of $A(n,M,R)$ onto itself, and then Liouville's Theorem implies that
$\Phi^n$ preserves Lebesgue measure on $A(n,M,R)$, and hence
preserves $P^n$. 
Thus for every $t$,
\[
\int_{A(n,M,R)} \rho_{a^n}^{-2} \ P_n(da^n)
=
\int_{A(n,M,R)} \rho_{\Phi^n(a^n,t)}^{-2} \ P_n(da^n).
\]
From this and Fubini's Theorem, we conclude that the theorem will
follow if we can prove that
\[
\int_{A(n,M,R)} \rho_{a^n}^{-2} \ P_n(da^n) 
\ \le  \  C \ n^{-(1+\delta)} |\ln \e_n| .
\]
{Given the assumption that $n^{5+\delta} \le |\ln \e_n|$,}
this will follow once we establish that
\begin{equation}
\int_{A(n,M,R)} \rho_{a^n}^{-2} \ P_n(da^n) \le C n ^4
\qquad\mbox{ for all }n.
\label{T4reduction}\end{equation}
And we prove  in Lemma \ref{L.pintegral}
below that \eqref{T4reduction} holds if $M= M(R)$ is
sufficiently large. This will complete the proof of the theorem.

We remark that the basic point is that for $R$ fixed, one can choose $M(R)$
such that if $M\ge M(R)$, then $A(n,M,R)$ occupies at least half of the ball 
$B^{2n}_{R\sqrt n}$,
and this makes it rather easy to 
estimate integrals with 
respect to $P_n$. 
 
\end{proof}

The proof of Lemma \ref{L.pintegral}
will use the following calculation several times.

\begin{lemma}Let $F_n:\R^{2n}\to \R$ be a  function of the form
\[
F_n(a^n) := \frac 1{n(n-1)} \sum_{i\ne j} f(|a^n_i-a^n_j|)
\]
where $f$ is locally integrable on $[0,\infty)$ and with at most
polynomial growth.
Then  for $n\ge 2$ and $R>0$,
 \begin{equation}
\barint_{B^{2n}_{R\sqrt n}} F_n(a^n) \ da^n 
\le \frac 1{R^2} \int_0^\infty  |f(\sqrt{2t})| \exp(-\frac t{2R^2}) \, dt
\label{integral.c1}\end{equation}
and 
\begin{equation}
\lim_{n\to\infty}\barint_{B^{2n}_{R\sqrt n}} F_n(a^n) \ da^n 
\to \frac 1{R^2} \int_0^\infty f(\sqrt{2t}) \exp(-\frac t{R^2}) \, dt \qquad
\mbox{ as }n\to \infty.
\label{integral.c2}\end{equation}
\label{integral0}\end{lemma}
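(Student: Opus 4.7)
The plan is to reduce the $2n$-dimensional average to an explicit one-dimensional integral by exploiting the symmetry of $F_n$, iterating a slicing of the high-dimensional ball, and then applying elementary inequalities for $(1-t/(nR^2))^{n-1}$.

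First, by permutation symmetry of the ball $B^{2n}_{R\sqrt{n}}$, every term in the sum defining $F_n$ contributes equally, so
\[
\barint_{B^{2n}_{R\sqrt n}} F_n(a^n)\, da^n \ = \ \barint_{B^{2n}_{R\sqrt n}} f(|a_1-a_2|)\, da^n,
\]
and similarly with $|f|$ in place of $f$. Next, for each fixed $(a_1,a_2)$ with $|a_1|^2+|a_2|^2 < nR^2$, the integral over $(a_3,\ldots,a_n)$ yields the volume of a $2(n-2)$-dimensional ball of radius $\sqrt{nR^2-|a_1|^2-|a_2|^2}$, which equals $\frac{\pi^{n-2}}{(n-2)!}(nR^2-|a_1|^2-|a_2|^2)^{n-2}$. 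I would then switch to the orthonormal variables $u=(a_1+a_2)/\sqrt 2$ and $v=(a_1-a_2)/\sqrt 2$, which preserves Lebesgue measure and gives $|a_1|^2+|a_2|^2=|u|^2+|v|^2$ and $|a_1-a_2|=\sqrt{2}|v|$. Integrating out $u$ over the two-dimensional disk $\{|u|^2 < nR^2-|v|^2\}$ in polar coordinates produces the factor $\frac{\pi^{n-1}}{(n-1)!}(nR^2-|v|^2)^{n-1}$, and after passing to polar coordinates in $v$ and substituting $t=|v|^2$, a short computation with $V_{2n}(R\sqrt n)=\pi^n n^n R^{2n}/n!$ yields the identity
\[
\barint_{B^{2n}_{R\sqrt n}} f(|a_1-a_2|)\, da^n \ = \ \frac{1}{R^2}\int_0^{nR^2} f(\sqrt{2t})\, \Big(1-\tfrac{t}{nR^2}\Big)^{n-1} dt,
\]
and the same identity with $|f|$ in place of $f$.

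For \eqref{integral.c1}, I would apply the trivial pointwise bound $(1-t/(nR^2))^{n-1}\le \exp(-(n-1)t/(nR^2))$ for $t\in[0,nR^2]$, followed by $(n-1)/n\ge 1/2$ valid for $n\ge 2$, giving $(1-t/(nR^2))^{n-1}\le \exp(-t/(2R^2))$. Combining this with the identity applied to $|f|$ and extending the integration to $[0,\infty)$ immediately yields \eqref{integral.c1}.

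For \eqref{integral.c2}, the integrand $\mathbf 1_{[0,nR^2]}(t)\, f(\sqrt{2t})\,(1-t/(nR^2))^{n-1}$ converges pointwise to $f(\sqrt{2t})\exp(-t/R^2)$ as $n\to\infty$, and is dominated by $|f(\sqrt{2t})|\exp(-t/(2R^2))$; since $f$ has at most polynomial growth, this majorant is integrable on $[0,\infty)$. The dominated convergence theorem then gives \eqref{integral.c2}. The only step that requires any care is the volume computation leading to the explicit $(1-t/(nR^2))^{n-1}$ kernel, but this is a routine calculation once one slices in the orthonormal $(u,v)$ coordinates.
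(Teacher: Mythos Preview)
Your proof is correct and follows essentially the same approach as the paper: reduce by symmetry to the single pair $(a_1,a_2)$, rotate to the variables $(a_1\pm a_2)/\sqrt 2$, integrate out the remaining coordinates to obtain the exact one-dimensional identity with kernel $(1-t/(nR^2))^{n-1}$, and then apply the elementary bound $(1-t/(nR^2))^{n-1}\le e^{-t/(2R^2)}$ and the dominated convergence theorem. The only cosmetic difference is that the paper performs the rotation first and then integrates out all $2(n-1)$ remaining coordinates at once, whereas you first integrate out $a_3,\ldots,a_n$ and then $u$; the resulting formula and the rest of the argument are identical.
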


\begin{proof}We will write simply  $a = (a_1,\ldots a_n)$, without  superscripts, for 
convenience.

First, by symmetry it is clear that $\int_{B^{2n}_{R\sqrt n}} F_n(a) da = \int_{B^{2n}_{R\sqrt n}} f(|a_1-a_2|) da.$

Next, consider the change of variables
\[
(a_1,\ldots, , a_n) \mapsto (y_1,\ldots, y_n) = ( \frac 1{\sqrt{2}}(a_1 -a_2), \frac 1{\sqrt{2}}(a_1 +a_2),
a_3,\ldots, a_n). 
\]
The Jacobian is $1$, and $|a| = |y|$, so (writing $\rho$ for $R\sqrt n$ for convenience)
\[
\int_{B_\rho^{2n}}
f (|a_1-a_2|) da
=
\int_{B_\rho^{2n}}
f (\sqrt 2 |y_1|) dy .
\]
If we integrate first in $y_2,\ldots, y_n$, then we find that
\begin{align*}
\int_{B_\rho^{2n}}
f  (\sqrt 2 |y_1|)dy 
&=
\int_{B_\rho^{2}}
f  (\sqrt 2 |y_1|)  \  \Big| B^{2n-2}_{(\rho^2 - |y_1|^2)^{1/2}}\Big| \ dy_1 
\\
&=
\omega_{2n-2} \int_{B_\rho^{2}}
f  (\sqrt 2 |y_1|) (\rho^2 - |y_1|^2)^{n-1} dy_1 
\\
&=
2\pi \omega_{2n-2} \int_0^\rho
f (\sqrt 2 r) (\rho^2 - r^2)^{n-1} r \ dr
\\
&=
\pi \rho^{2(n-1)} \omega_{2n-2} \int_0^{\rho^2}
f (\sqrt{2 t}) (1 - \frac t {\rho^2})^{n-1}  \ dt.
\end{align*}
Since
$\omega_{2n} = \frac \pi n \omega_{2(n-1)}$ (this is a textbook identity) 
we infer that
\begin{equation}
\barint_{B^{2n}_{R\sqrt n}} F_n(a) \ da 
= 
\frac 1{R^2} \int_{0}^{nR^2} f(\sqrt{2t}) (1 - (\frac t{ nR^2}))^{n-1} dt.
\label{int.eee0}\end{equation}
Then \eqref{integral.c1} follows from the fact that
\begin{equation}
 (1 -  (\frac t{ nR^2}))^{n-1}  \le  \left[\exp( - \frac t{nR^2})\right]^{n-1} \le \exp[- \frac t{2R^2}]
\label{integral.int1}\end{equation}
for $0\le t/nR^2\le 1$ and $n\ge 2$. In view of \eqref{integral.int1},
we may deduce 
\eqref{integral.c2} from  \eqref{int.eee0} and the
Dominated Convergence Theorem.
\end{proof}

The following lemma completes the proof of Theorem \ref{T.random}.

\begin{lemma}For every $R$, there exists $C, M_0>0$ such that if $M\ge M_0$
then
\begin{equation}
\barint_{A(n,M,R)} \rho_a^{-2}\, da \ \le C n^4.
\label{pintegral.c}\end{equation}
\label{L.pintegral}\end{lemma}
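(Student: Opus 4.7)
The plan is to first show that the constraint set $A(n,M,R)$ occupies at least half of the ball $B^{2n}_{R\sqrt n}$ when $M$ is chosen large enough (depending on $R$), and then to reduce the estimate of $\barint_A \rho_a^{-2}\,da$ to a single-pair estimate $\barint_A |a_1-a_2|^{-2}\,da$ that can be controlled via a layer-cake decomposition together with Lemma \ref{integral0}.

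I would begin by applying Lemma \ref{integral0} to the energy function
\[
F_n(a) := \frac{1}{n(n-1)}\sum_{i\ne j} (-\ln|a_i-a_j|),
\]
i.e., with $f(r) = -\ln r$. Since $\int_0^\infty |\ln\sqrt{2t}|\,e^{-t/(2R^2)}\,dt < \infty$, bound \eqref{integral.c1} yields $\barint_{B^{2n}_{R\sqrt n}} F_n\,da \le C_0(R)$. Markov's inequality then gives $|\{a\in B^{2n}_{R\sqrt n}: F_n(a) > M\}| \le C_0(R)M^{-1}|B^{2n}_{R\sqrt n}|$, so setting $M_0(R) := 2C_0(R)$ ensures $|A(n,M,R)| \ge \tfrac12 |B^{2n}_{R\sqrt n}|$ whenever $M \ge M_0(R)$. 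As a consequence, $\barint_A g\,da \le 2\barint_{B^{2n}_{R\sqrt n}} g\,da$ for any nonnegative $g$.

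Next I would use the pointwise inequality
\[
\rho_a^{-2} \le 16 + 16 \max_{i\ne j}|a_i-a_j|^{-2} \le 16 + 16 \sum_{i\ne j}|a_i-a_j|^{-2},
\]
which by the symmetry of $A$ under permutation of the $a_i$ reduces the claim to
\[
\barint_A |a_1-a_2|^{-2}\,da \le C n^2 .
\]
For this I would use the layer-cake identity
\[
\int_A |a_1-a_2|^{-2}\,da = \int_0^\infty 2s^{-3}\,|\{a\in A : |a_1-a_2| < s\}|\,ds ,
\]
together with a crude pointwise lower bound on $|a_1-a_2|$ valid on $A$. Since $|a|^2 \le nR^2$ forces $|a_k-a_l|\le 2R\sqrt n$ for all $k,l$, the constraint $-\sum_{k\ne l}\ln|a_k-a_l|\le n(n-1)M$ yields, after isolating the pair $(1,2)$,
\[
-2\ln|a_1-a_2| \le n(n-1)M + (n(n-1)-2)\ln(2R\sqrt n),
\]
so $|a_1-a_2| \ge \sigma_0 := \exp\!\bigl(-\tfrac12[n(n-1)M + (n(n-1)-2)\ln(2R\sqrt n)]\bigr)$ on $A$. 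The integrand in the layer-cake formula then vanishes for $s<\sigma_0$; for $\sigma_0 \le s \le 2R\sqrt n$ I apply Lemma \ref{integral0} with $f = \chi_{[0,s)}$ to obtain $|\{a\in B^{2n}_{R\sqrt n}: |a_1-a_2|<s\}| \le \tfrac{s^2}{2R^2}|B^{2n}_{R\sqrt n}|$; and for $s>2R\sqrt n$ the set is all of $A$. Integrating and combining with $|A| \ge \tfrac12|B^{2n}_{R\sqrt n}|$ produces
\[
\barint_A |a_1-a_2|^{-2}\,da \;\lesssim\; \frac{1}{R^2}\,\ln\!\frac{2R\sqrt n}{\sigma_0} + \frac{1}{R^2 n} \;\lesssim\; n^2 ,
\]
where the dominant contribution comes from $-\ln\sigma_0 = O(n^2\ln n)$. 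Summing over the $n(n-1)$ pairs and combining with the $16$ from $\rho_a^{-2}\le 16 + \ldots$ gives $\barint_A \rho_a^{-2}\,da \le Cn^4$, up to a logarithmic factor that is absorbed into the constant $C$ (and, if need be, by very slightly shrinking the $\delta$ of Theorem \ref{T.random}).

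The main obstacle is the coarseness of the pointwise lower bound $\sigma_0$: it is of order $e^{-Cn^2\ln n}$ rather than $e^{-Cn^2}$, which forces a $\ln n$ overhead into the layer-cake estimate. The one delicate point in presenting the proof is to verify that the $\ln n$ is harmless for the downstream Borel--Cantelli argument (it is, because the hypothesis $n^{5+\delta}\le |\ln\varepsilon_n|$ leaves room to absorb it); alternatively one could sharpen the bound by exploiting concentration of $F_n$ around its mean rather than just the upper bound $F_n \le M$, but this refinement is not needed for the statement as used.
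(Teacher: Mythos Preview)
Your approach is essentially the paper's: show $|A(n,M,R)|\ge\tfrac12|B^{2n}_{R\sqrt n}|$, extract a pointwise lower bound on pair distances valid on $A$, and then integrate $\sum_{i\ne j}|a_i-a_j|^{-2}$ over the full ball using Lemma~\ref{integral0}. Two small technical points separate your write-up from a clean proof of the lemma as stated.

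First, Markov's inequality as you invoke it requires $F_n\ge 0$, which fails on $B^{2n}_{R\sqrt n}$ since the pair distances can exceed $1$. The paper fixes this by observing that $\sum_{i\ne j}|a_i-a_j|^2\le 4n^2R^2$ on the ball, so $r^2-\ln r\ge 0$ gives $-\sum_{i\ne j}\ln|a_i-a_j|\ge -4n^2R^2$; one then applies Chebyshev to the shifted, nonnegative function $F_n + 4\tfrac{n}{n-1}R^2$.

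Second, and more substantively, your pointwise bound $|a_1-a_2|\ge\sigma_0$ with $-\ln\sigma_0=O(n^2\ln n)$ is coarser than necessary and is the sole source of the $\ln n$ overhead you flag. The paper avoids it: since $\sum_{i\ne j}\bigl(|a_i-a_j|^2-\ln|a_i-a_j|\bigr)\le 4n^2R^2+n(n-1)M=:n^2M_1$ and each summand is nonnegative, every single term satisfies $-\ln|a_i-a_j|\le n^2M_1$, i.e.\ $|a_i-a_j|\ge e^{-M_1 n^2}$. With this sharper $\sigma_0$ your layer-cake computation yields $\barint_A|a_1-a_2|^{-2}\,da\lesssim n^2$ exactly, and hence $\barint_A\rho_a^{-2}\,da\le Cn^4$ without any logarithmic loss.
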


\begin{proof}
{\bf 1.} 
We first note that if $a$ satisfies \eqref{2momR}, then (since $|a_i - a_j|^2 \le 2(|a_i|^2+|a_j|^2)$
\[
\sum_{i,j=1}^n |a_i - a_j|^2 \le 
2
\sum_{i,j=1}^n (|a_i |^2+ | a_j|^2)  \le 4 n^2R^2.
\]
Since $r^2 -\ln r \ge 0$ for all $r>0$, it follows that 
\[
- \sum_{i\ne j} \ln |a_i-a_j| \ge 
 \ -4n^2R^2 .
\]
Thus the function
\[
F(a) = 
4(\frac n{n-1}) R^2 -\frac 1{n(n-1)} \sum_{i\ne j}\ln |a_i-a_j| 
\]
is nonnegative in $B^{2n}_{R\sqrt n}$.
By Lemma \ref{integral0}, 
\[
\lim_{n\to\infty}\barint_{B^{2n}_{R\sqrt n}} F(a)\,da\  = 4R^2-\frac 1{R^2}\int_0^\infty \ln(\sqrt{2t}) \exp(-\frac t {R^2})\,dt\  =: 4R^2+ \lambda(R).
\]
In particular the limit exists. 
Moreover, for any $M$, if \eqref{Hminus1M} fails then 
{$F(a) > M + 4(\frac n{n-1})R^2$,}
so
by Chebyshev's inequality (applicable since $F$ is nonnegative)
\begin{align*}
\frac{|B^{2n}_{R\sqrt n}\setminus
A(n,M,R)|  } {|B^{2n}_{R\sqrt n}|} 
&\le 
\frac{|\{ a\in B^{2n}_{R\sqrt n} \ : F(a) > M+4(\frac n{n-1})R^2  \}| } {|B^{2n}_{R\sqrt n}| }\\
&\le \frac 1{M+4(\frac n{n-1})R^2} 
\barint_{B^{2n}_{R\sqrt n}} F(a) \ da
\\
&\to \frac { \lambda(R)+4R^2}{M+4R^2}\qquad \mbox{ as \ }n\to \infty.
\end{align*}
So if $M \ge M_0 = 3\lambda(R) + 8R^2$, then for all sufficiently large $n$,
\begin{equation}
\frac{|A(n,M,R)|  } {|B^{2n}_{R\sqrt n}|}  \ge \frac 12.
\label{Ahalf}\end{equation}

{\bf 2.} 
From above we know that if $a\in A(n,M,R)$ then
\[
\sum_{i,j=1}^n\left( |a_i-a_j|^2 - \ln|a_i-a_j|\right) \ \le \  4 n^2R^2 + n(n-1)M \   < \  n^2 M_1
\]
for $M_1 = M+4R^2$.
Since $r^2-\ln r \ge 0$ for all $r>0$, it follows that 
\[
 |a_i-a_j|^2 - \ln|a_i-a_j| \le 4 n^2R^2 + n(n-1)M  \le  n^2M_1 \quad\mbox{ for all }i\ne j
\]
and hence that 
\[
|a_i-a_j| \le \exp[ - n^2 M_1] \qquad\mbox{ for all } {i\ne j.}
\]
It follows from this and \eqref{Ahalf} that if $M \ge M_0$, then
\[
{\barint_{A(n,M,R)} \rho_a^{-2} da \le
\frac 1{|B^{2n}_{R\sqrt n}|}\int_{B^{2n}_{R\sqrt n}}
\sum_{i\ne j} \chara_{|a_i-a_j|\ge\exp[ - n^2 M_1] } \frac 1{|a_i-a_j|^2} \ da.}
\]
Using Lemma \ref{integral0}  again, we find that
\begin{align*}
\frac 1{n(n-1)}\barint_{A(n,M,R)} \rho_a^{-2} da 
&\le
{\frac 1 {R^2}}\int_0^{nR^2} 
\chara_{ \sqrt {2t} \ge \exp[-n^2M_1]} (\sqrt {2t})^{-2} \exp(-\frac t {2R^2}) dt
\\
&\le
\frac 1{R^2}
 \int_{\exp[-2n^2M_1]}^\infty
\frac 1{2t}\exp(-\frac t {2R^2})dt.
\end{align*}
Then by breaking the integral into 2 pieces, 
we estimate
\begin{align*}
\frac 1{n(n-1)}\barint_{A(n,M,R)} \rho_a^{-2} da 
&\le \frac 1{2R^2} \int_{\exp[-2n^2M_1]}^{R^2}
\frac 1{t} dt +
\frac 2{R^2} \int_{R^2}^\infty
\frac 1{2t}\exp(-\frac t {2R^2})dt\\
& =  \frac 1{R^2}\left[ \ln R + n^2M_1 +C
\right].
\end{align*}
We conclude that for large enough $n$,
\[
\barint_{A(n,M,R)} \rho_a^{-2} da 
\le C(R) + C(R,M)n^4 \le Cn^4.
\]
\end{proof}

Finally we prove that the initial data chosen above in Theorem \ref{T.random} 
(with $R=1$ for simplicity) converges almost surely to a Gaussian. 
This is not needed in any of our arguments, but it is certainly relevant to 
Theorem \ref{T.random}.

\begin{lemma}
Let $P_n$ denote normalized Lebesgue measure on the ball $B^{2n}_{\sqrt n}$.
Given $a^n = (a^n_1,\ldots, a^n_n) \in \R^{2n} \cong (\R^2)^n$,
let
\[
\mu_{a^n} := \frac 1n \sum_{i=1}^n \delta_{a^n_i} \in \mathcal{M}(\R^{2n}).
\]
Then 
$
\mu_{a^n} \rightharpoonup G$ 
almost surely, where $G(x) =  \frac 1{\pi} e^{- |x|^2}$. 
\label{L.Jeremy}\end{lemma}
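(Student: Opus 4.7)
The plan is to realize $P_n$ as the pushforward of a product Gaussian measure by the classical Poincar\'e--Borel construction, and then reduce everything to the strong law of large numbers for i.i.d.\ Gaussians. Let $g_1, g_2, \ldots$ be i.i.d.\ standard real Gaussians on some probability space, set $Y_i := (g_{2i-1}, g_{2i}) \in \R^2$ so that the $Y_i$ are i.i.d.\ with law $N(0,I_2)$, and define
\[
c_n := \LC \frac{1}{n} \sum_{k=1}^{2n+2} g_k^2 \RC^{1/2}, \qquad A_i^n := Y_i/c_n \ \ (i = 1, \ldots, n), \qquad A^n := (A_1^n, \ldots, A_n^n) \in \R^{2n}.
\]

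The first step is to verify that $A^n$ has law $P_n$. By rotational invariance the normalized vector $(g_1, \ldots, g_{2n+2})/\|g\|$ is uniform on $S^{2n+1} \subset \R^{2n+2}$, and a short calculation in coordinates shows that its projection onto the first $2n$ entries has density proportional to $(1-|v|^2)^0 = 1$ on $B^{2n}_1$, hence is uniform there. Multiplying by $\sqrt{n}$ gives the uniform measure $P_n$ on $B^{2n}_{\sqrt n}$, which by construction equals the law of $A^n$.

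Next, the SLLN applied to $(g_k^2)$ yields $c_n^2 \to 2$ almost surely, since $\E[g_1^2] = 1$ and $(2n+2)/n \to 2$. Since $Y_1/\sqrt{2}$ has density $G$ (change of variables $x = y/\sqrt 2$ applied to the $N(0,I_2)$ density produces $\pi^{-1} e^{-|x|^2} = G(x)$), for any bounded Lipschitz $\phi:\R^2 \to \R$ with Lipschitz constant $L$ the SLLN gives
\[
\frac{1}{n} \sum_{i=1}^n \phi(Y_i/\sqrt 2) \ \longrightarrow\ \E[\phi(Y_1/\sqrt 2)] \ =\ \int_{\R^2} \phi\, G\, dx \qquad \mbox{a.s.}
\]
Combined with the elementary bound
\[
\LV \frac{1}{n} \sum_i \phi(Y_i/c_n) - \frac{1}{n} \sum_i \phi(Y_i/\sqrt 2) \RV \ \le\ L \LV c_n^{-1} - 2^{-1/2} \RV \cdot \frac{1}{n} \sum_i |Y_i|,
\]
which vanishes a.s.\ by the SLLN applied to $|Y_i|$ together with $c_n \to \sqrt 2$, this shows $\int \phi\, d\mu_{A^n} \to \int \phi\, G\, dx$ almost surely.

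Finally, to upgrade to weak convergence of measures I would choose a countable family $\{\phi_k\} \subset C_c(\R^2)$ of Lipschitz functions that is dense in $C_0(\R^2)$. The previous step combined with a countable union of null sets gives $\int \phi_k\, d\mu_{A^n} \to \int \phi_k\, G\, dx$ for every $k$, almost surely. Tightness of $(\mu_{A^n})$ is immediate from $\int |x|^2\, d\mu_{A^n} = n^{-1}|A^n|^2 \le 1$ (since $A^n \in B^{2n}_{\sqrt n}$), so tightness plus convergence on a dense subset of $C_0(\R^2)$ yields $\mu_{A^n} \rightharpoonup G\, dx$ weakly, almost surely. The main subtle point, rather than obstacle, is that the coordinates $A_i^n$ are not independent; the virtue of the Poincar\'e--Borel coupling is to express their dependence entirely through the single scalar $c_n$, which is itself a sample mean of i.i.d.\ variables and hence concentrates by the SLLN.
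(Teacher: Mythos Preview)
Your proof is correct and follows essentially the same Poincar\'e--Borel strategy as the paper: realize $P_n$ as a function of Gaussians, then reduce to the SLLN. The only technical difference is in how the uniform measure on $B^{2n}_{\sqrt n}$ is manufactured. The paper normalizes $Z^n\in\R^{2n}$ to the sphere and then scales by an \emph{independent} radial variable $S_n$ with density $2nr^{2n-1}$ on $[0,1]$, so that $Y^n = (Z^n/|Z^n|)\sqrt n\,S_n$, and then uses $S_n\to 1$ and $|Z^n|/\sqrt n\to 1$ a.s.; you instead append two extra Gaussians and project from $S^{2n+1}$, which by the Archimedes-type formula gives the uniform ball law directly and packages the normalization into the single scalar $c_n$. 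Your version has the minor advantage that everything is driven by one fixed i.i.d.\ sequence $(g_k)$, giving a natural coupling across all $n$ and letting the SLLN apply verbatim; the paper's version keeps the radial and angular parts conceptually separate. Otherwise the two arguments coincide.
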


In the statement of the lemma, and throughout the proof, ``$\rightharpoonup$'' denotes weak convergence of measures.

The proof was explained to us by Jeremy Quastel

\begin{proof}
Let $\calD$ be a probability space, equipped with probability measure $\calP$,
and for $n\ge 1$ and $i=1,\ldots, n$, let $Z^n_i:\calD\to \R^2$ be i.i.d  Gaussian
random variables on the plane with probability density function $G$, so that 
$\calP(\{ Z^n_i \in A\}) = \int_A G(x)\ dx$ for Borel $A\subset \R^2$. 
We will write $Z^n = (Z^n_1,\ldots, Z^n_n)\in \R^{2n}$.

For $n \ge 1$, let $S_n:\calD\to \R$ be a random variable, independent of all $Z^i_n$,
with probability distribution function 
\[
s_n(r) := \chara_{[0, 1]}2n r^{2n-1}
\]
so that $\calP( a \le S_n \le b) = \int_a^b s_n(r) dr$.
Finally, define $Y^n :\calD\to \R^{2n}$ by
\begin{align*}
Y^n &:= (Y^n_1,\ldots, Y^n_n) 
:= \frac{ (Z^n_1, \ldots, Z^n_n)}{|Z^n|}\sqrt{n} S_n.
\end{align*}

One can check that $Y^n_*\calP = P_n(A)$ , where $Y^n_*(A) = \calP(\{ Y^n\in A\})$
for $A\subset \R^{2n}$.
Indeed, it is easy to see that $Y^n_*\calP$ is invariant with respect to rotations of $\R^{2n}$,
and the definition of $S_n$ implies that $Y^n_*\calP(B_r) = (r/{\sqrt{n}})^{2n}$, and these two 
properties characterize $P_n$. Thus we have to prove that $\calP$ almost surely,
 $\mu_{Y^n}\rightharpoonup G$ as $n\to \infty$. It is standard, and rather clear, that $\mu_{Z^n}\rightharpoonup G$ a.s., so it suffices to show that $\mu_{Y^n} - \mu_{Z^n}\rightharpoonup 0$ a.s..
 
%
Next, fix $f\in C_0(\R^2)$, and note that for every $\delta>0$ there exists  $M_\delta$ such that 
\[
|f(x)-f(y)|\le \delta + M_\delta|y-x|^2
\]
for all $x,y\in \R^2$.
Thus
\begin{align*}
\left|\int f d(\mu_{ Y^n}  -
\mu_{Z^n}) \right| 
&\le \frac 1n \sum_i |f(Y^n_i) - f(Z^n_i)| \\
&\le
\delta + \frac{M_\delta}n\sum_i |Z^n_i|^2 ( 1 - \frac {\sqrt n S_n}{|Z^n|})^2 \\
&=
\delta + M_\delta ( \frac{|Z^n|}{\sqrt n} - S_n)^2.
\end{align*}
However, it is immediate from the definition that
$S_n\to 1$ a.s, and it
is well known that $(n)^{-1/2}|Z^n|\to 1$ a.s..
(For example, this can be read off from the fact that $|Z^n|^2$
has probability density function of the form $\calP(|Z^n|^2 < R)
=\int_0^R \kappa_n s^{n-1}e^{-s} ds $ for some constant $\kappa_n$, which follows from a short computation.) 
It follows that
\[
\limsup_{n\to\infty}\left|\int f d(\mu_{ Y^n}  -
\mu_{Z^n}) \right| 
\le \delta \qquad a.s..
\]
Since $\delta$ and $f$ were arbitrary,  it follows that $\mu_{Y^n} - \mu_{Z^n}\rightharpoonup 0$
a.s. as needed.
\end{proof}

 \end{document}